\documentclass[11pt, DIV10,a4paper]{article}



\usepackage{natbib}
\usepackage{graphicx,subfig,amsmath,latexsym,amssymb}
\usepackage{float,epsfig,multirow,rotating,times}
\usepackage{upgreek,wrapfig}
\usepackage{comment}
\usepackage[colorlinks,citecolor=blue,urlcolor=blue,filecolor=blue,backref=page]{hyperref}
\usepackage{enumitem}
\usepackage{physics}
\usepackage{longtable}
\usepackage{pbox}

\newcommand {\ctn}{\citet} 
\newcommand {\ctp}{\citep}       
\newcommand{\mylabel}[2]{#2\def\@currentlabel{#2}\label{#1}}
\newcommand{\nmd}{\mathrm{NMD}}

\newcommand{\bd}{\boldsymbol{d}}

\newcommand{\btheta}{\boldsymbol{\theta}}

\newcommand{\bbeta}{\boldsymbol{\beta}}

\newcommand{\bPhi}{\boldsymbol{\Phi}}

\newcommand{\bTheta}{\boldsymbol{\Theta}}

\newcommand{\bSigma}{\boldsymbol{\Sigma}}

\newcommand{\bG}{\boldsymbol{G}}

\newcommand{\bX}{\boldsymbol{X}}

\newcommand{\bz}{\boldsymbol{z}}

\newcommand{\bzero}{\boldsymbol{0}}

\newcommand{\bone}{\boldsymbol{1}}

\newcommand{\postp}{P_{\btheta|\bX_n}}
\newcommand{\tpostp}{\tilde{P}_{\btheta|\bX_n}}
\newcommand{\pexp}{E_{\btheta|\bX_n}}

\newcommand{\bThetainf}{\mathbf{\Theta}^\infty} 
\newcommand{\mm}{\mathcal{M}}
\newcommand{\dnm}{\delta_{\mathcal{NM}}}
\newcommand{\mn}{\mathcal{MN}}
\newcommand{\fdrx}{FDR_{\bX_n}}
\newcommand{\fnrx}{FNR_{\bX_n}}
\newcommand{\mfdr}{mFDR_{\bX_n}}
\newcommand{\mfnr}{mFNR_{\bX_n}}
\newcommand{\mg}{\mathcal{G}_n}
\newcommand{\thetao}{\boldsymbol{\theta}_0}

\DeclareMathOperator*{\argmax}{argmax}
\DeclareMathOperator*{\ess}{ess}

\newtheorem{theorem}{Theorem}

\newtheorem{corollary}[theorem]{Corollary}

\newtheorem{definition}[theorem]{Definition}

\newtheorem{lemma}[theorem]{Lemma}

\newtheorem{remark}[theorem]{Remark}

\newenvironment{proof}[1][Proof]{\textbf{#1.} }{\ \rule{0.5em}{0.5em}}

\numberwithin{equation}{section}
\numberwithin{algo}{section}
\numberwithin{table}{section}
\numberwithin{figure}{section}

%

\usepackage{setspace}
\usepackage[mathscr]{euscript}
\usepackage[margin=1in]{geometry}
\singlespacing

\usepackage{nameref,zref-xr}
\zxrsetup{toltxlabel}

\graphicspath{{img/}}
\begin{document}

\normalsize

\title{\vspace{-0.8in}
{\bf Asymptotic Theory of Dependent Bayesian Multiple Testing Procedures Under Possible Model Misspecification}}
\author{Noirrit Kiran Chandra and Sourabh Bhattacharya\thanks{
Noirrit Kiran Chandra is a postdoctoral researcher at Department of Statistical Science, Duke University, USA, and Sourabh Bhattacharya 
is an Associate Professor in Interdisciplinary Statistical Research Unit, Indian Statistical
Institute, 203, B. T. Road, Kolkata 700108.
Corresponding e-mail: noirritchandra@gmail.com.}}
\date{\vspace{-0.5in}}
\maketitle%

\begin{abstract}
We study asymptotic properties of Bayesian multiple testing procedures and provide sufficient conditions for strong consistency under general dependence structure. We also consider a novel Bayesian multiple testing procedure and associated error measures that coherently accounts for the dependence structure present in the model. We advocate posterior versions of
FDR and FNR as appropriate error rates and show that their asymptotic
convergence rates are directly associated with the Kullback-Leibler divergence
from the true model. Our results hold even when the class of postulated models is
misspecified. We illustrate our results in a variable selection problem with autoregressive response variables, and compare the new Bayesian procedure with some existing methods through extensive simulation studies in the variable selection problem. Superior performance of the new procedure compared to the others vindicate that proper exploitation of the dependence
structure by multiple testing methods is indeed important. Moreover, we obtain encouraging results in a real, maize data context, where we select influential marker variables.
\\[2mm]
{\bf MSC 2010 subject classifications:} Primary 62F05, 62F15; secondary 62C10, 62J07.
\\[2mm]
{\bf Keywords:} Bayesian multiple testing, Dependence, False discovery rate, Kullback-Leibler, Misspecified model, Posterior convergence.
\end{abstract}

\section{Introduction}
\label{sec:introduction}
In recent times there have been a tremendous growth in the area of multiple hypothesis testing as simultaneous inference on several parameters are often necessary. \ctn{Benjamini95} introduced a powerful approach to handle this problem in their landmark paper. However, in most real life situations the test statistics are generally dependent. \ctn{by01} showed that the Benjamini-Hochberg procedure is valid under positive dependence.
\ctn{berry99} have given a Bayesian perspective on multiple testing where the tests are related through a dependent prior. \ctn{scott10} discussed how empirical Bayes and fully Bayes methods adjust multiplicity. 

There are many works in the statistical literature on optimality and asymptotic behaviour of multiple testing methods in dependent cases. \ctn{sun2007} have proposed an optimal adaptive procedure where the data is generated from a two-component mixture model. \ctn{finner2002,efron2007} discussed the effects of dependence of error rates, among others. \ctn{finner2009} proposed new step-up and step-down procedures which asymptotically maximize power while controlling $FDR$. \ctn{caioptim11} have proposed an asymptotic optimal decision rule for short range dependent data with dependent test statistics. 

In this article, we study asymptotic properties of loss-function based Bayesian multiple testing procedures under general dependence setup. 
We show that under mild conditions such procedures are consistent in the sense that the decision rules converge to the truth with increasing sample size, even under dependence. 
We also show that the derived results hold even when the class of postulated models do not contain the true data generating process, that is, when the class of proposed models is misspecified.

\ctn{finner2007} discussed the effect of dependent test statistics on the \textit{false discovery rate} $(FDR)$.  \ctn{schwartzman2011} and \ctn{fan2012} discussed estimation of $FDR$ under correlation. 
In the frequentist multiple testing domain, the common practice is to control $FDR$ or the \textit{false non-discovery rate} $(FNR)$. Therefore, in that domain, asymptotic study of $FDR$ or $FNR$ in dependent cases has been done under different set ups. However, in the Bayesian literature, asymptotic study of the aforementioned error rates is not regular, although in practice, it is necessary to control those error rates. In this article, we conduct asymptotic analyses on these error rates under general dependent setup. We show that these error rates are directly associated to the Kullback-Leibler (KL) divergence from the true model in terms of their asymptotic convergence rates.

In the frequentist multiple testing setup, the decision rule for a hypothesis generally depends only on the corresponding test statistics. Bayesian loss-function based multiple testing methods are generally based on marginal posterior probabilities of a null hypothesis hypothesis being true or false. Most of the existing methods are marginal in the sense that the decision rule for a hypothesis do not depend on decisions of other hypotheses. 
Indeed, an important issue that seems to have received relatively less attention is that by proper utilization of the dependence structure among different hypotheses, 
the efficiency of multiple testing procedures can be significantly improved. \ctn{sun2009} have showed that incorporating the dependence structure of the parameters in the 
testing procedure increase efficiency. 

The aforementioned discussion points towards taking decisions regarding the hypotheses jointly.
In this regard, \ctn{chandra2017} developed a novel Bayesian multiple testing method which coherently takes the dependence structure among the hypotheses into consideration. 
In their method, the decisions are obtained jointly, as functions of appropriate joint posterior probabilities, and hence the method is referred to as a non-marginal Bayesian procedure. The procedure is based on new notions of error and non-error terms associated with breaking up the total number of hypotheses. 
They have shown that by virtue of the joint decision making principle, the non-marginal procedure has the desirable compound decision theoretic properties and for large samples, minimizes the KL divergence from the true data generating process, under general dependence models. Further, with extensive simulation studies they demonstrate significant gain in power over the existing marginal multiple testing methods, both classical and Bayesian. Application of this method to a deregulated microRNA discovery problem yielded insightful results which could not be obtained otherwise \ctp{chandra_mirna}.
In the following section we briefly describe the multiple testing procedure.

\subsection{A new non-marginal Bayesian multiple testing procedure}
Let $\bX_n=\{X_1,\ldots,X_n\}$ denote the available data set. Suppose the data is modelled by the family of distributions $P_{\bX_n|\btheta}$ (which may also be non-parametric). 
For $M>1$, let us denote by $\bTheta=\Theta_1\times\cdots\times\Theta_M$ 
the relevant parameter space associated with $\btheta=(\theta_1,\ldots,\theta_M)$, where we allow $M$ to be infinity as well. 
Let $\postp(\cdot)$ and $\pexp(\cdot)$ denote the posterior distribution and expectation respectively of $\btheta$ given $\bX_n$ and let $P_{\bX_n}(\cdot)$ and $E_{\bX_n} (\cdot)$ denote the marginal distribution and expectation of $\bX_n$ respectively. Let us consider the problem of testing $m$ hypotheses simultaneously corresponding to the actual parameters of interest, where
$1<m\leq M$. In this work, however, we assume $m$ to be finite. 

Without loss of generality, let us consider testing the parameters associated with $\Theta_i$; $i=1,\ldots,m$,
formalized as:
$$ H_{0i}:\theta_i \in \Theta_{0i}  \hbox{ versus } H_{1i} : \theta_i \in \Theta_{1i},$$ 
where $\Theta_{0i} \bigcap \Theta_{1i}=\emptyset \mbox{ and } \Theta_{0i} \bigcup \Theta_{1i} 
= \Theta_{i},\mbox{ for $i=1,\cdots,m$}.$

Let
\begin{align*}
d_i=&\begin{cases}
1&\text{if the $i$-th hypothesis is rejected;}\\
0&\text{otherwise;}
\end{cases}\\
r_i=&\begin{cases}
1&\text{if $H_{1i}$ is true;}\\
0&\text{if $H_{0i}$ is true.} 
\end{cases}
\end{align*}

In many real life situations, dependent prior structure is envisaged on the parameter space based on available domain knowledge. For example in spatial statistics, Gaussian process prior is often considered. In fMRI data, Gaussian Markov random field prior is a common prior. In such cases, the additional information on the parameters are incorporated in the model through the prior distribution. Various applications in recent times in fields as diverse as spatial statistics and environment \citep{Risser19}, 
time series \citep{Scott09}, neurosciences \citep{Brown14}, biological sciences \citep{Jensen09}, 
to name only a few,
consider Bayesian models with dependent prior structures. The basic idea behind the new multiple testing methodology is to incorporate such information, when available, in the testing procedure to obtain improved decision rule. This principle is in accordance with the traditional Bayesian philosophy
that when prior information is available, inference can be enhanced. 

Let $G_i$ be the set of hypotheses (including hypothesis $i$) where the parameters are 
dependent on $\theta_i$. In the new procedure, the decision of each hypothesis is penalized by incorrect decisions regarding other dependent parameters resulting in a compound criterion where all the decisions in $G_i$ deterministically depends upon each other. 
Define the following quantity
\begin{equation}
z_i=\begin{cases}
1&\mbox{if $H_{d_jj}$ is true for all $j\in G_i\setminus\{i\}$;}\\
0&\mbox{otherwise.}
\end{cases}\label{eq:z}
\end{equation}
If, for any $i\in\{1,\ldots,m\}$, $G_i=\{i\}$, a singleton, then we define $z_i=1$.
The notion of true positives $(TP)$ are modified as the following
\begin{equation}
TP=\sum_{i=1}^md_ir_iz_i,
\label{eq:tp}
\end{equation}
The posterior expectation of $TP$ is maximized subject to controlling the posterior expectation of the error term
\begin{equation}
E=\sum_{i=1}^md_i(1-r_iz_i).
\label{eq:e}
\end{equation}
It follows that the decision configuration can be obtained by minimizing the function
\begin{align}
\xi(\bd)&=-\sum_{i=1}^md_i\pexp(r_iz_i)+\lambda_n\sum_{i=1}^md_i \pexp (1-r_iz_i)\notag\\
&= -(1+\lambda_n)\sum_{i=1}^md_i\left(w_{in}(\bd)-\frac{\lambda_n}{1+\lambda_n}\right),\notag
\end{align}
with respect to all possible decision configurations of the form $\bd=\{d_1,\ldots,d_m\}$, where
$\lambda_n>0$,
and
\begin{equation*}
w_{in}(\bd)=\pexp(r_iz_i)= \postp\left(H_{1i}\cap\left\{\cap_{j\neq i,j\in G_i}H_{d_jj}\right\}\right),
\end{equation*}
is the posterior probability of the decision configuration $\{d_1,\ldots,d_{i-1},1,d_{i+1},\ldots,d_m\}$
being correct.
Letting $\beta_n=\lambda_n/(1+\lambda_n)$, one can equivalently maximize
\begin{equation}
f_{\beta_n}(\bd)=\sum_{i=1}^m d_i\left(w_{in}(\bd)-\beta_n\right)\label{eq:beta1}
\end{equation}
with respect to $\bd$ and obtain the optimal decision configuration.

\begin{definition}
	Let $\mathbb D$ be the set of all $m$-dimensional binary vectors denoting all possible decision configurations. Define $$\widehat{\bd}=\argmax_{\bd\in\mathbb{D}} f_\beta(\bd)$$ where $0<\beta<1$. Then $\widehat{\bd}$ is the \textit{optimal decision configuration} obtained as the solution of the non-marginal multiple testing method.
	\label{def:nmd}
\end{definition}

Note that in the definitions of both $TP$ and $E$, we penalize $d_i$ by incorrect decisions in the same group. Thus we design a compound criterion where decisions regarding dependent parameters deterministically depend upon each other adjudging other dependent parameters.


It is to be noted that there exist several cluster-based methods in the literature of multiple hypotheses testing. The works of \ctn{benjamini2007,sun15} are important to mention in this respect, among others. However, the $G_i$s in (\ref{eq:z}) are not to be confused with the notion of clusters in the aforementioned works. In their approaches a particular cluster of parameters is regarded as a signal or not. Essentially the decisions regarding the parameters in their clusters are same. However, that is not the case for our non-marginal method. 
The motivation behind our grouping is to borrow strength through the dependence structure across dependent parameters. This is a common practice in various applications \ctp{zhang2011,LIU2016}.

\subsection{Choice of $G_1,\ldots,G_m$}
\label{subsec:choiceofgroups}
Note that the non-marginal method depends on the choice of $G_i$s. However, in implementation
of the method, forming groups based on all dependent parameters might be disadvantageous in high
dimensional cases. Keeping very weakly dependent parameters in $G_i$ would increase the complexity of the method without
providing much extra information about the dependence structure. It would incur over-penalization levying high
posterior probability of $z_i = 0$. This might turn the method to
be overly conservative. Therefore, we recommend to restrict the group sizes proportional to the correlation among the parameters. \ctn{chandra2017} have prescribed the following strategy of group formation.

Let $\Lambda$ be the prior correlation matrix of $\btheta$. Let the $(i,j)$-th element of $\Lambda$ be $\lambda_{ij}$. We first consider the correlations between 
the $i$-th and $j$-th parameters, with $i<j$, 
and obtain a desired percentile $\lambda$ of these quantities. 
Then, in $G_i$ we include only those indices $j~(\neq i)$ such that $\lambda_{ij}\geq\lambda$.
Thus, the $i$-th group contains indices of the parameters that are highly correlated with the $i$-th parameter. If there exists no index $j$ such that $\lambda_{ij}\geq\lambda$, then $G_i=\{i\}$. 
In our applications we have considered $\lambda$ to be the 95-$th$ percentile, which is seen to have yielded good results.

Once the prior associated with the model is decided and well-chosen, the $G_i$s as defined above will also be fixed and would lead to reliable results. 
In case the prior information on the correlation structure of the parameters is weak, $\Lambda$ can be considered as the posterior correlation matrix of the parameters. Groups formed on the basis of the true correlation gives the best result as expected. However, groups formed on the basis of posterior correlation significantly improves the performance \ctp{chandra2017}. In Section \ref{simulation_asymptotic}, the groups are formed on the basis of posterior correlation and the strategy has outperformed some popular existing multiple testing methods in a variable selection context.

\label{pg:robust} Notably for large samples, Bayesian methods are usually robust with respect to prior choice and there is a huge literature formalizing this aspect. For example \ctn{schwartz1965,ghosal2000} discussed that Bayesian models are asymptotically consistent given that the priors satisfy certain regularity conditions. 
In the same vein we study the asymptotic properties of the Bayesian non-marginal method in this article and show that the procedure is asymptotically robust with respect to the 
choice of group structure later in Section \ref{subsec:asymp_robust}. 
In the same section we provide sufficient conditions for the asymptotic consistency of the non-marginal method. For illustrative purposes, we show that the conditions hold under a very general class of prior distributions in a time-varying covariate selection problem where the response variables possess inherent autocorrelation structure for any proper prior distribution over the parameter space.



\subsection{Existing and new error measures in multiple testing}
\label{subsec:Bayesian_errors}

\ctn{storey03} advocated \textit{positive False Discovery Rate} $(pFDR)$ as a measure of type-I error in multiple testing. Let $\delta_{\mm}(\bd|\bX_n)$ be the probability of choosing $\bd$ as the optimal decision configuration given data $\bX_n$ when a multiple testing method $\mm$ is employed. Then $pFDR$ is defined as:
\begin{equation*}
pFDR=E_{\bX_n} \left[ \sum_{\bd\in\mathbb{D}}  
\frac{\sum_{i=1}^{m}d_i(1-r_i)}{\sum_{i=1}^{m}d_i}\delta_\mm(\bd|\bX_n)\bigg{|}\delta_\mm(\bd=\mathbf{0}|\bX_n)=0 \right].
\end{equation*}

Analogous to type-II error, the \textit{positive False Non-discovery Rate} $(pFNR)$ is defined as
\begin{align*}
pFNR= E_{\bX_n}\left[\sum_{\bd\in\mathbb D} \frac{\sum_{i=1}^m(1-d_i)r_i} {\sum_{i=1}^m(1-d_i)} \delta_{\mathcal M}\left(\bd|\bX_n\right)
\bigg | \delta_{\mathcal M}\left(\bd=\bone|\bX_n\right)=0\right].
\end{align*}

Under prior $\pi(\cdot)$, \ctn{SanatGhosh08} defined posterior $FDR$ and $FNR$. The measures are given as following:
\begin{align*}
posterior~FDR
&= \pexp\left[\sum_{\bd\in\mathbb D}\frac{\sum_{i=1}^md_i(1-r_i)}{\sum_{i=1}^md_i \vee 1}\delta_{\mathcal M}\left(\bd|\bX_n\right) \right]= \sum_{\bd\in\mathbb{D}} 
\frac{\sum_{i=1}^{m}d_i(1-v_{in})}{\sum_{i=1}^{m}d_i \vee 1}\delta_{\mathcal M}(\bd|\bX_n); \\
posterior~FNR
&= \pexp\left[\sum_{\bd\in\mathbb D} \frac{\sum_{i=1}^m(1-d_i)r_i} {\sum_{i=1}^m(1-d_i)\vee 1} \delta_{\mathcal M}\left(\bd|\bX_n\right)
\right]=\sum_{\bd\in\mathbb D}
\frac{\sum_{i=1}^{m}(1-d_i)v_{in}}{\sum_{i=1}^{m}(1-d_i)\vee 1}\delta_{\mathcal M}(\bd|\bX_n),
\end{align*}

where $v_{in}=\postp(\varTheta_{1i})$. Also under any non-randomized decision rule $\mathcal{M}$, $\delta_{\mathcal{M}}(\bd|\bX_n)$ is either 1 or 0 depending on data $\bX_n$. Given $\bX_n$, we denote these posterior error measures by $\fdrx$ and $\fnrx$ respectively.

With respect to the new notions of errors in (\ref{eq:tp}) and (\ref{eq:e}),
$\fdrx$ is modified as
\begin{align*}
modified~\fdrx &=\pexp\left[ \sum_{\bd\in\mathbb D}\frac{\sum_{i=1}^md_i(1-r_iz_i)}{\sum_{i=1}^md_i\vee1}
\delta_{\mathcal M}\left(\bd|\bX_n\right)\right] 
\notag\\
&=  \sum_{\bd\in\mathbb{D}}  
\frac{\sum_{i=1}^{m}d_i(1-w_{in} (\bd))}{\sum_{i=1}^{m}d_i\vee1}\delta_{\mathcal M} (\bd|\bX_n).
\end{align*}

We denote $modified~\fdrx$ and $\fnrx$ by $\mfdr$ and $\mfnr$ respectively. Notably, the expectations of $\fdrx$ and $\fnrx$ with respect to $\bX_n$, conditioned on the fact that their respective denominators are positive, yields the \textit{positive Bayesian} $FDR~(pBFDR)$ and $FNR$ $(pBFNR)$ respectively. The same expectation over $\mfdr$ yields \textit{modified positive} $BFDR~(mpBFDR)$.





We advocate the posterior error measures $\mfdr,~\fdrx$ and $\fnrx$ as multiple testing error controlling measures in Bayesian multiple testing. These measures give the performance of the employed multiple testing procedure given the data, and hence most appropriate from the Bayesian perspective. In particular, wisdom gained from the traditional debate between 
the classical and Bayesian paradigms suggests that avoiding expectation with respect to the data in the error measures can help avoid possible paradoxes analogous to examples such as
the Welch's paradox \ctp{Welch39}.  
Not only that, the posterior error measures are readily estimable in practical situations, however complicated the dependent structure may be, without any assumption. In Section \ref{sec:compare_BFDR}, we show that the asymptotic convergence rates of these measures are associated with the KL divergence between the true data generating process and the selected model. 
As regards $\mfdr$, it takes into account the joint dependence structure between parameters through the $z_i$ terms. As will be shown subsequently, this joint dependence
structure manifests itself through the convergence rate of $\mfdr$. 
\ctn{chandra2017} also showed that $\mfdr$ can be interpreted as the posterior probability of an incorrect decisions within each group. Extensive simulation studies indicated that controlling the $\mfdr$ gives better protection against the type-II error in dependent cases.


\ctn{muller04} considered the following additive loss function
\begin{equation}
L(\bd,\btheta)= c\sum_{i=1}^m d_i(1-r_i)+ \sum_{i=1}^m (1-d_i)r_i,
\label{eq:loss_mul}
\end{equation}
where $c$ is a positive constant.
The decision rule that minimizes the posterior risk of the above loss is 
$d_i=I\left(v_i>\frac{c}{1+c}\right)$ for all $i=1,\cdots,m$, where $I(\cdot)$ is the indication function. 

This loss function has been widely used in the Bayesian multiple testing setups and also in frequentist decision theoretic approaches \ctp{sun2009, caioptim11}. Notably, the non-marginal method boils down to this additive loss function based approach when $G_i=\{i\}$, that is, when the information regarding  dependence between hypotheses is not available or overlooked. Hence, the convergence properties of the additive loss function based methods can be easily derived from our theories. We discuss this subsequently later in this article.

It is to be seen that multiple testing problems can be regarded as model selection problems where the task is to choose the correct specification for the parameters under consideration. Even if one decision is taken incorrectly, the model gets misspecified. \ctn{Shalizi09} considered asymptotic behaviour of misspecified models under very general conditions. We adopt his basic assumptions and some of his convergence results to build a general asymptotic theory for our multiple testing method. 

In Section \ref{sec:compare_BFDR}, we provide the setup, assumptions and the main result which we adopt for our purpose. In the same section 
we investigate consistency of the non-marginal multiple testing procedure. In Section \ref{subsec:asymp_mpBFDR}, we study the rates of convergence of different versions of $FDR$s
and asymptotic comparison between them. 
In Section \ref{sec:compare_BFNR}, we investigate the asymptotic properties of different versions of $FNR$s. 
We then investigate, in Section \ref{sec:asymp_category_a}, the asymptotic properties of the relevant versions of $FNR$ when the multiple testing
methods are adjusted so that $mpBFDR$ and $pBFDR$ tend to $\alpha$, for some $\alpha\in(0,a)$, where $a\leq 1$.
Indeed, as we show, any value of $\alpha\in (0,1)$ is not permissible asymptotically.
We further show that the versions of $FNR$ tend to zero at a faster rate compared to the situations where $\alpha$-control is
not exercised.
In Section \ref{sec:ar1} we illustrate the asymptotic properties of the non-marginal method in a time-varying covariate selection problem where the response variables possess inherent autocorrelation structure. In Section \ref{simulation_asymptotic}, we compare the performance of this method with some popular existing Bayesian multiple testing methods.
In Section \ref{sec:realdata} we apply our non-marginal method to a variable selection problem in a real, maize data with 7389 covariates representing
SNP (single nucleotide polymorphism) markers, concerning linear regression
of ``days to anthesis male flowering time" on the covariates. Excellent fit is the outcome, once the significant variables have been been selected by our Bayesian 
multiple testing method.  
Finally, in Section \ref{sec:conclusion} we summarize our contributions and provide concluding remarks.


\section{Consistency of the non-marginal procedure and other procedures based on additive loss}
\label{sec:compare_BFDR}
\subsection{Preliminaries for ensuring posterior convergence under general setup}
\label{sec:shalizi}
We consider a probability space $(\Omega,\mathcal F, P)$, 
and a sequence of random variables $X_1,X_2,\ldots$,   
taking values in some measurable space $(\Xi,\mathcal X)$, whose
infinite-dimensional distribution is $P$. The natural filtration of this process is
$\sigma(\bX_n)$. We denote the distributions of processes adapted to $\sigma(\bX_n)$ 
by $P_{\bX_n|\btheta}$, where $\btheta$ is associated with a measurable
space $(\bTheta,\mathcal T)$, and is generally infinite-dimensional. 
For the sake of convenience, we assume, as defined by \ctn{Shalizi09}, that $P$
and all the $P_{\bX_n|\btheta}$ are dominated by a common reference measure, with respective
densities $p$ and $f_{\btheta}$. The usual assumptions that $P\in\bTheta$ or even $P$ lies in the support 
of the prior on $\bTheta$, are not required for Shalizi's result, rendering it very general indeed. We put the prior distribution $\pi(\cdot)$ on the parameter space $\bTheta$. Following Shalizi we first define some notations:
Consider the following likelihood ratio:
\begin{equation*}
R_n(\btheta)=\frac{f_{\btheta}(\bX_n)}{p(\bX_n)}.
\end{equation*}
For every $\btheta\in\Theta$, the KL-divergence rate $h(\btheta)$ is defined as 
\begin{equation}
h(\btheta)=\underset{n\rightarrow\infty}{\lim}~\frac{1}{n}E\left(\log\frac{p(\bX_n)}{f_{\btheta}(\bX_n)}\right),
\label{eq:KL_rate}
\end{equation}
given that the above limit exists. For $A\subseteq\bTheta$, let
\begin{align}
h\left(A\right)=\underset{\btheta\in A}{\mbox{ess~inf}}~h(\btheta);~ 
J(\btheta)=h(\btheta)-h(\Theta);~
J(A)=\underset{\btheta\in A}{\mbox{ess~inf}}~J(\btheta).\label{eq:J2}
\end{align}

We have stated the assumptions \ref{shalizi1}-\ref{shalizi7} considered by Shalizi in Section \ref{subsec:assumptions_shalizi} of the attached supplementary file. Under those assumptions the following theorem can be seen to hold:
\begin{theorem}[\ctp{Shalizi09}]
	\label{th:shalizi}
	Consider assumptions \ref{shalizi1}--\ref{shalizi7} and any set $A\in\mathcal T$ with $\pi(A)>0$. If $\varsigma>2h(A)$, where
	$\varsigma$ is given in (\ref{eq:S5_1}) under assumption \ref{s5}, then
	\begin{equation*}
	\underset{n\rightarrow\infty}{\lim}~\frac{1}{n}\log\postp(A|\bX_n)=-J(A).
	\end{equation*}
\end{theorem}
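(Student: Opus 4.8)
The plan is to follow Shalizi's argument: reduce the claim to a ratio of two prior integrals of the likelihood ratio and control the two pieces separately. Write $I_n(B)=\int_B R_n(\btheta)\,\pi(d\btheta)$ for $B\in\mathcal T$. By Bayes' rule (dividing numerator and denominator by $p(\bX_n)$) one has $\postp(A|\bX_n)=I_n(A)/I_n(\bTheta)$, with $I_n(\bTheta)\in(0,\infty)$ $P$-almost surely once the lower bound below is established, so that $\frac1n\log\postp(A|\bX_n)=\frac1n\log I_n(A)-\frac1n\log I_n(\bTheta)$. Since $J(A)=h(A)-h(\bTheta)$, it suffices to show, $P$-almost surely, that $\frac1n\log I_n(B)\to -h(B)$ both for $B=A$ and for $B=\bTheta$. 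Observe that the hypothesis $\varsigma>2h(A)$ forces $h(A)<\infty$, and since $h(A)\ge h(\bTheta)$ it also gives $\varsigma>2h(\bTheta)$, so the same estimates will cover the denominator.

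For the lower bound $\liminf_n\frac1n\log I_n(B)\ge -h(B)$, fix $\epsilon>0$ and, by definition of the essential infimum, choose $B_\epsilon\subseteq B$ with $\pi(B_\epsilon)>0$ and $h(\btheta)\le h(B)+\epsilon$ for $\pi$-almost every $\btheta\in B_\epsilon$. Jensen's inequality applied to $\log$ against the normalized prior restricted to $B_\epsilon$ gives
\[
\frac1n\log I_n(B)\ge \frac1n\log\pi(B_\epsilon)+\frac{1}{\pi(B_\epsilon)}\int_{B_\epsilon}\frac{\log R_n(\btheta)}{n}\,\pi(d\btheta).
\]
The first term vanishes as $n\to\infty$. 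By assumptions (S2)--(S3), $\frac1n\log R_n(\btheta)\to -h(\btheta)$ $P$-a.s.\ for each $\btheta$ and is bounded below by a $\pi$-integrable function on $B_\epsilon$; Fatou's lemma then gives $\liminf_n\frac1n\log I_n(B)\ge -\frac{1}{\pi(B_\epsilon)}\int_{B_\epsilon}h(\btheta)\,\pi(d\btheta)\ge -(h(B)+\epsilon)$, and letting $\epsilon\downarrow 0$ completes the lower bound.

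For the upper bound $\limsup_n\frac1n\log I_n(A)\le -h(A)$, split $I_n(A)=I_n(A\cap\mg)+I_n(A\setminus\mg)$ using the sieves $\mg$ of assumption (S5). On $A\cap\mg$, the essentially uniform convergence of $\frac1n\log R_n$ to $-h$ over $\mg$ supplied by (S5)--(S7), combined with $h(\btheta)\ge h(A\cap\mg)$ for $\pi$-a.e.\ $\btheta\in A\cap\mg$ and (the sieves exhausting $\bTheta$ so that) $h(A\cap\mg)\to h(A)$, yields $I_n(A\cap\mg)\le \pi(A)\exp\big(n(-h(A)+o(1))\big)$. For the off-sieve term, Tonelli's theorem and $E[R_n(\btheta)]\le 1$ give $E[I_n(A\setminus\mg)]\le E[I_n(\bTheta\setminus\mg)]\le \pi(\bTheta\setminus\mg)$, which by (S5) is at most $e^{-n(\varsigma-\epsilon)}$ for all large $n$. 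Applying Markov's inequality at level $e^{-n(\varsigma-\epsilon)/2}$ yields $P\big(I_n(\bTheta\setminus\mg)>e^{-n(\varsigma-\epsilon)/2}\big)\le e^{-n(\varsigma-\epsilon)/2}$, a summable series, so by the Borel--Cantelli lemma $\frac1n\log I_n(\bTheta\setminus\mg)\le -(\varsigma-\epsilon)/2$ eventually, $P$-a.s. Because $\varsigma>2h(A)$, for small enough $\epsilon$ this is strictly less than $-h(A)$, so the off-sieve term is asymptotically negligible and $\limsup_n\frac1n\log I_n(A)\le -h(A)$. The same reasoning with $A$ replaced by $\bTheta$ gives the matching statement for the denominator, and subtracting the two limits yields $\frac1n\log\postp(A|\bX_n)\to -h(A)+h(\bTheta)=-J(A)$.

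The main obstacle is the off-sieve mass $I_n(\bTheta\setminus\mg)$: one controls only its mean, so turning the mean bound into an almost-sure bound through Markov plus Borel--Cantelli forces a test at level $e^{-n\varsigma/2}$ rather than $e^{-n\varsigma}$ to keep the series summable, losing a factor of two in the exponent; the surviving rate $\varsigma/2$ must still dominate the on-sieve rate $h(A)$, which is exactly the hypothesis $\varsigma>2h(A)$. The remaining delicate points — justifying the Fatou interchange in the lower bound, and extracting from the sieve assumptions the essentially uniform convergence of $n^{-1}\log R_n$ over $\mg$ together with $h(A\cap\mg)\to h(A)$ — are precisely what assumptions (S2)--(S3) and (S5)--(S7) are tailored to supply, so in a full write-up those amount to invoking the relevant assumption.
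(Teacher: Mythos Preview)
The paper does not prove this statement; Theorem \ref{th:shalizi} is simply quoted from \ctn{Shalizi09} and used as a black box throughout. So there is no ``paper's own proof'' to compare against --- what you have sketched is essentially Shalizi's original argument, and your overall architecture (write the posterior as $I_n(A)/I_n(\bTheta)$, establish $\frac1n\log I_n(B)\to -h(B)$ for both numerator and denominator, and handle the off-sieve mass via Markov plus Borel--Cantelli, which is exactly where the factor of $2$ in $\varsigma>2h(A)$ enters) is correct and matches Shalizi's route.

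There is, however, a genuine gap in your lower bound. After Jensen you invoke Fatou on $\int_{B_\epsilon}\frac{1}{n}\log R_n(\btheta)\,\pi(d\btheta)$, asserting that $\frac1n\log R_n(\btheta)$ is ``bounded below by a $\pi$-integrable function on $B_\epsilon$.'' Nothing in (S2)--(S3) supplies such a bound: those assumptions give only pointwise almost-sure convergence and existence of $h(\btheta)$, not any uniform or integrable control from below along the way. Without that, Fatou does not apply, and the interchange fails. Shalizi's actual lower bound does \emph{not} go through Fatou on an arbitrary $B_\epsilon$; instead it again routes through the sieves $\mathcal G_n$ and uses the \emph{uniform} convergence in (S5) part (2) together with (S6)--(S7) to get, for $\btheta\in\mathcal G_n\cap B$ and $n$ large, $\frac1n\log R_n(\btheta)\ge -h(\btheta)-\delta\ge -h(\mathcal G_n\cap B)-\delta$ uniformly, from which $I_n(B)\ge I_n(\mathcal G_n\cap B)\ge \pi(\mathcal G_n\cap B)\exp\{-n(h(\mathcal G_n\cap B)+\delta)\}$ and then $h(\mathcal G_n\cap B)\to h(B)$ by (S7). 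Your closing paragraph gestures at this but misattributes the work to (S2)--(S3); the lower bound, like the upper bound, rests on the sieve assumptions (S5)--(S7), not on a Fatou argument.
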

We shall frequently make use of this theorem for our purpose. Also throughout this article, we show consistency results for general models satisfying \ref{shalizi1}--\ref{shalizi7}. For all our results, we assume these conditions. 
\subsection{Some requisite notations for the non-marginal method}
\label{subsec:notations}

It is very interesting that we need not assume that the true data generating process $P$ is in the class of postulated model $F_{\btheta};~\btheta\in\bTheta$. However, asymptotic consistency of the non-marginal procedure can still be achieved in the sense that, with increasing sample size the model with minimal misspecification is selected.  Note that depending on $d_j=0$ or 1, $\Theta_{d_jj}$ is the specification corresponding to $\theta_j$ directed by $d_j$. Now for all possible decision configurations the parameter space $\bTheta$ can have the following partition
\begin{equation*}
\bTheta(\bd)=\prod_{i=1}^m \Theta_{d_ii}\times\prod_{i=m+1}^M \Theta_{i}.
\end{equation*}
Note that $J(\bTheta)$ is the minimal KL-divergence between the true data generating process $P$ and the class of all postulated models. Among all possible decision configurations $\bd\in \mathbb{D}$, let $\bd^t$ be such that $J(\bTheta(\bd^t))=J(\bTheta)$. Note that $\bd^t$ minimizes the
KL-divergence between the true data generating model $P$ among all possible decision configurations. We regard $\bd^t$ as the true decision configuration. We will show that with increasing sample size the non-marginal procedure will choose $\bd^t$ as the optimal decision rule almost surely $(a.s.)$. We now define some notations required for further advancements.
\[
\bTheta_{i,\bd}=\left\lbrace \theta_i\in\Theta_{1i}, \theta_j\in\Theta_{d_jj}~\forall~j\neq i~\&~j\in G_i \right\rbrace.
\]
Then $\bTheta_{i,\bd}$ is the joint parameter space for the parameters in $G_i$ directed by $\bd$. For any decision configuration $\bd$ and group $G$ let $\bd_G=\{d_j:j\in G\}$. Define
$$\mathbb D_{i}=\left\{\bd:~\mbox{all decisions in}~\bd_{G_i}~\mbox{are correct}\right\}.$$
Here $\mathbb D_i$ is the set of all decision configurations where the decisions corresponding to the hypotheses in $G_i$ are at least correct. Clearly $\mathbb D_i$ contains $\bd^t$ for all $i$.

Hence, $\mathbb D^c_{i}=\left\{\bd:~\mbox{at least one decision in} ~\bd_{G_i}~ \mbox{is incorrect}\right\}$.
%
By Theorem \ref{th:shalizi}, for any $\epsilon>0$, there exists $n_0(\epsilon)$ such that for each $i=1,\ldots,m$,
for $n\geq n_0(\epsilon)$,
\begin{align}
&\exp\left[-n\left(J\left(\bTheta_{i,\bd}\right)+\epsilon\right)\right]
<w_{in}(\bd)<\exp\left[-n\left(J\left(\bTheta_{i,\bd}\right)-\epsilon\right)\right]
~\mbox{if}~\bd\in\mathbb D^c_{i}, \label{eq:shalizi1}\\
&\exp\left[ -n(J(\bTheta_{i,\bd^t}^c)+\epsilon) \right] <1- w_{in}(\bd)<\exp\left[ -n(J(\bTheta_{i,\bd^t}^c)-\epsilon ) \right] ~\mbox{if}~\bd\in\mathbb D_i. \label{eq:shalizi2}
\end{align}

Also, for $i=1,\ldots,m$, and for $n\geq n_0(\epsilon)$,
\begin{align}
&\exp\left[-n\left(J\left(H_{1i}\right)+\epsilon\right)\right]
<v_{in}<\exp\left[-n\left(J\left(H_{1i}\right)-\epsilon\right)\right],~
\mbox{if}~d^t_i=0;
\label{eq:shalizi1_muller}\\
&1-\exp\left[-n\left(J\left(H_{0i}\right)-\epsilon\right)\right]<v_{in}
<1-\exp\left[-n\left(J\left(H_{0i}\right)+\epsilon\right)\right]~\mbox{if}~d^t_i=1\label{eq:shalizi2_muller}
\end{align}
where
$
J(\bTheta_{i,\bd})=\underset{\btheta\in\varPsi_{i\bd}}{\ess\inf} J(\btheta) ;
J(H_{ki})=\underset{\btheta\in\varUpsilon_{ki}}{\ess\inf} J(\btheta)
$
and 
\begin{align*}
\varPsi_{i\bd}=&\{\theta_i\in\Theta_{1i}, \theta_j\in\Theta_{d_jj}~\forall~j\neq i~\&~j\in G_i,\theta_k\in\Theta_k ~\forall~k\in G_i^c \}\mbox{ and}\\
\varUpsilon_{ki}=&\{\theta_i\in\Theta_{ki}, \theta_j\in\Theta_{j}~\forall~j\neq i \},~k=0,1.
\end{align*}

Note that in $\varPsi_{i\bd}$, $\theta_k$ lies in its whole parameter space for all $k\in G_i^c$, irrespective of the fact that $d_k$ might be incorrect. Hence, it corresponds to a model 
where only $\left\{\theta_k:k\in G_i\right\}$ may be misspecified. $J(\bTheta_{i,\bd})$ gives the KL-divergence rate (defined in \eqref{eq:J2}) between the true model and this model. 
Also 
$J\left(\Theta_{i\bd}\right)>0$ if $\bd\in\mathbb D^c_i$,  
$J\left(H_{1i}\right)>0$ if $d^t_i=0$ and  $J\left(H_{0i}\right)>0$ if $d^t_i=1$.

It is important to observe that, in the above equations \eqref{eq:shalizi1}-\eqref{eq:shalizi2_muller}, 
we have referred to the same $\epsilon$ and the same $n_0(\epsilon)$ for every $i=1,\ldots,m$. Due to the finiteness of $m$, taking the same $n_0(\epsilon)$ is possible here. 



\subsection{The basic consistency theory for multiple testing with application to the non-marginal and additive loss based procedures}
\label{subsec:convergence}
With the above notations, in this section we show that the non-marginal procedure is asymptotically consistent under any general dependent model satisfying the conditions 
in Section \ref{subsec:assumptions_shalizi}. 
As a simple corollary, we show that other existing multiple testing procedures based on additive loss, are also consistent. 
Let us first formally define what we mean by asymptotic consistency of a multiple testing procedure.
\begin{definition}
	Let $\bd^t$ be the true decision configuration among all possible decision configurations as defined in Section \ref{subsec:notations}. Then a multiple testing method $\mm$ is said to be asymptotically consistent if almost surely
	\begin{equation*}
	\lim_{n\rightarrow\infty} \delta_\mm(\bd^t|\bX_n) =1.
	\end{equation*}
	\label{def:consistent}
\end{definition}
We now state the requisite conditions for $\nmd$ to be asymptotically consistent.

\begin{enumerate}[label={(A\arabic*)}]	
	\item \label{A1} We assume that the sequence $\beta_n$ is neither too small nor too large, that is,
	\begin{align}
	\underline\beta&=\underset{n\geq 1}{\liminf}\beta_n>0;\label{eq:liminf_beta}\\
	\overline\beta&=\underset{n\geq 1}{\limsup}\beta_n<1.\label{eq:limsup_beta}
	\end{align}
	\item \label{A2} We assume that neither all the null hypotheses are true and nor all of then are false, that is, $\bd^t\neq\bzero$ and $\bd^t\neq\bone$, where
	$\bzero$ and $\bone$ are vectors of 0's and 1's respectively.
\end{enumerate}

Recall the constant $\beta_n$ in (\ref{eq:beta1}), which is the penalizing constant between the error $E$ and true positives $TP$ and \ref{A1} ensures a fine balance between these two. It is necessary for the asymptotic consistency of both the non-marginal method and additive loss function based method. Notably, \ref{A2} is not required for the consistency results. Its role is to ensure that the denominator terms in the multiple testing error measures (defined in Section \ref{subsec:Bayesian_errors}) do not become 0 by ruling out two very extreme situations where none/all of the null hypotheses are false. It is also important in the asymptotic studies of the different versions of $FDR$ and $FNR$ that we consider. With these conditions we propose and prove the following results.
\begin{theorem}
	\label{th:nmd_consistent}
	Let $\dnm(\cdot|\bX_n)$ denote the non-marginal decision rule given data $\bX_n$. 
	Assume condition \ref{A1} on $\beta_n$. Then the non-marginal decision procedure is asymptotically consistent.
\end{theorem}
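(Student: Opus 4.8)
The plan is to show that, on an event of probability one, the objective $f_{\beta_n}$ of \eqref{eq:beta1} is strictly maximized at $\bd^t$ for all large $n$; since $\mathbb D$ is finite and $\dnm$ is non-randomized, this gives $\dnm(\bd^t|\bX_n)=1$ eventually, which is Definition \ref{def:consistent}. The only ingredients are the almost sure posterior-rate bounds \eqref{eq:shalizi1}--\eqref{eq:shalizi2} (which hold with a single random threshold $n_0(\epsilon)$ simultaneously over all $i\le m$ and all $\bd\in\mathbb D$), the positivity of the KL rates recorded just below \eqref{eq:shalizi2_muller} — so that $w_{in}(\bd)\to 0$ whenever $\bd\in\mathbb D_i^c$ and $w_{in}(\bd^t)\to 1$ for every $i$ with $d_i^t=1$ — and condition \ref{A1}.

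Fix $\epsilon>0$ with $\underline\beta-2\epsilon>0$ and $\overline\beta+2\epsilon<1$, so that by \ref{A1}, $\underline\beta-\epsilon<\beta_n<\overline\beta+\epsilon$ for all large $n$. For an arbitrary $\bd\in\mathbb D$ put $A=\{i:d_i^t=1\}$ and split $B=\{i:d_i=1\}$ into $B_1=\{i\in B:\bd\in\mathbb D_i\}$ and $B_0=\{i\in B:\bd\in\mathbb D_i^c\}$; note that $B_1\subseteq A$ (a correct rejection at $i$ forces $d_i^t=1$) and that for $i\in B_1$ one has $\bTheta_{i\bd}=\bTheta_{i\bd^t}$, so $w_{in}(\bd)\to 1$. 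Since only terms with $d_i=1$ enter \eqref{eq:beta1}, $f_{\beta_n}(\bd^t)=\sum_{i\in A}(w_{in}(\bd^t)-\beta_n)$, and the crux is the decomposition
\[
f_{\beta_n}(\bd^t)-f_{\beta_n}(\bd)
=\sum_{i\in B_1}\bigl(w_{in}(\bd^t)-w_{in}(\bd)\bigr)
+\sum_{i\in A\setminus B_1}\bigl(w_{in}(\bd^t)-\beta_n\bigr)
-\sum_{i\in B_0}\bigl(w_{in}(\bd)-\beta_n\bigr).
\]
For $n$ large the first sum is $o(1)$ (both $w_{in}$'s tend to $1$), each summand of the second exceeds $1-\overline\beta-2\epsilon$, and each summand of the third, written as $\beta_n-w_{in}(\bd)$, exceeds $\underline\beta-2\epsilon$. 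Since $\bd\neq\bd^t$ forces $|A\setminus B_1|+|B_0|\ge 1$ — otherwise $B_0=\emptyset$ and $A\setminus B_1=\emptyset$ give $B=B_1=A$, i.e.\ $\bd=\bd^t$ — we obtain $f_{\beta_n}(\bd^t)-f_{\beta_n}(\bd)\ge -o(1)+\min(1-\overline\beta-2\epsilon,\ \underline\beta-2\epsilon)>0$ for all large $n$. Taking the largest of the finitely many thresholds over $\bd\neq\bd^t$ finishes the argument.

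The only real difficulty is bookkeeping around $w_{in}$: the bounds \eqref{eq:shalizi1}--\eqref{eq:shalizi2} should be applied only at indices with $d_i=1$, where the fixed ``$1$'' in position $i$ of $w_{in}(\bd)$ is a genuinely correct label when $\bd\in\mathbb D_i$ and where $J(\bTheta_{i\bd})>0$ when $\bd\in\mathbb D_i^c$; one must also verify that one almost sure event and one $\epsilon$ serve all finitely many pairs $(\bd,i)$ at once. It is worth noting that condition \ref{A2} is not needed here: the decomposition already covers the degenerate configurations $\bd^t=\bzero$ (then $f_{\beta_n}(\bd^t)=0$ while every competing $\bd$ attains the strictly negative value $\sum_{i\in B_0}(w_{in}(\bd)-\beta_n)$) and $\bd^t=\bone$ without change.
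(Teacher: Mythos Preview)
Your proof is correct and follows essentially the same approach as the paper: both argue directly that $f_{\beta_n}(\bd^t)>f_{\beta_n}(\bd)$ for every $\bd\neq\bd^t$ eventually almost surely, using the posterior bounds \eqref{eq:shalizi1}--\eqref{eq:shalizi2} together with \ref{A1} and the finiteness of $\mathbb D$. The paper first cancels the indices $i$ with $\bd\in\mathbb D_i$ (where $d_i=d_i^t$ and $w_{in}(\bd)=w_{in}(\bd^t)$) and then splits into two cases according to the sign of $\sum_{i:\bd\in\mathbb D_i^c}(d_i^t-d_i)$, whereas your $A,B_0,B_1$ decomposition handles both cases at once; incidentally your first sum is identically zero rather than merely $o(1)$, since $w_{in}(\bd)=w_{in}(\bd^t)$ for $i\in B_1$.
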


\begin{remark}
	\label{remark}
	It is important to note that in the proof of Theorem \ref{th:nmd_consistent}, we do not require any assumption on how the groups should be formed. The theorem is valid even if the groups are implicitly dependent on the observed data. This shows that, in case the prior information on the correlation structure of the parameters is weak, the non-marginal method is also valid when the groups are formed on the basis of posterior correlation or by other data-adaptive methods.
\end{remark}

We have already mentioned that the optimal decision rules corresponding to the loss function in (\ref{eq:loss_mul}) is a special case of the non-marginal method when dependence among the hypotheses is ignored. As we have not considered any particular structure of $G_i$'s in Theorem \ref{th:nmd_consistent}, consistency of the additive loss-function based method can also be obtained from the previous theorem.
\begin{corollary}
	Assuming condition \ref{A1}, the optimal decision rule corresponding to the additive loss function (\ref{eq:loss_mul}) is asymptotically consistent.
\end{corollary}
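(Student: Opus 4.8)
The plan is to obtain this corollary as an immediate specialization of Theorem~\ref{th:nmd_consistent}, using the observation already recorded in the text that the additive-loss rule of \ctn{muller04} is exactly the non-marginal rule when every group is a singleton. Concretely, I would set $G_i=\{i\}$ for every $i=1,\ldots,m$; then, by the convention immediately following \eqref{eq:z}, $z_i\equiv 1$, so $r_iz_i=r_i$ and $w_{in}(\bd)=\pexp(r_iz_i)=\pexp(r_i)=\postp(\Theta_{1i})=v_{in}$, which no longer depends on $d_j$ for $j\neq i$. Hence the objective \eqref{eq:beta1} collapses to the separable form $f_{\beta_n}(\bd)=\sum_{i=1}^m d_i(v_{in}-\beta_n)$, whose maximizer over $\mathbb D$ is obtained coordinatewise as $d_i=I(v_{in}>\beta_n)$; with $\beta_n=c/(1+c)$ this is precisely the Bayes rule $d_i=I(v_i>c/(1+c))$ for the loss \eqref{eq:loss_mul}. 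Thus the additive-loss decision rule $\delta_{\mm}(\cdot|\bX_n)$ coincides with $\dnm(\cdot|\bX_n)$ under this grouping.

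Next I would check that Theorem~\ref{th:nmd_consistent} applies here. Its only hypothesis is condition~\ref{A1}; and since $c>0$ is fixed, $\beta_n\equiv c/(1+c)$ is a constant in $(0,1)$, so $\underline\beta=\overline\beta=c/(1+c)$ and both \eqref{eq:liminf_beta}--\eqref{eq:limsup_beta} hold trivially (the argument is unchanged if one allows a varying $c_n$ bounded away from $0$ and $\infty$). Moreover, the proof of Theorem~\ref{th:nmd_consistent} uses no structural property of the $G_i$'s: they enter only through the sets $\mathbb D_i,\mathbb D^c_i$ and the exponential bounds \eqref{eq:shalizi1}--\eqref{eq:shalizi2}, all of which remain valid, and in fact become trivial, when $G_i=\{i\}$. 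The theorem therefore yields $\lim_{n\to\infty}\dnm(\bd^t|\bX_n)=1$ a.s., which by the identification above equals $\lim_{n\to\infty}\delta_{\mm}(\bd^t|\bX_n)=1$ a.s., the asserted consistency.

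I do not expect a genuine obstacle. The one point needing a line of care is that the identification of the two decision rules holds only off the tie event $\{v_{in}=\beta_n\}$; but this is asymptotically irrelevant, since \eqref{eq:shalizi1_muller}--\eqref{eq:shalizi2_muller} give $v_{in}\to 0$ when $d^t_i=0$ and $v_{in}\to 1$ when $d^t_i=1$, so for all large $n$ one has $v_{in}\neq\beta_n$ for every $i$ and the tie-breaking convention never affects the limiting decision. All the substantive work has already been done in Theorem~\ref{th:nmd_consistent}.
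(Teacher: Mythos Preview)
Your proposal is correct and follows essentially the same approach as the paper: identify the additive-loss rule as the special case $G_i=\{i\}$ of the non-marginal procedure, note that Theorem~\ref{th:nmd_consistent} imposes no restriction on the group structure, and conclude. The paper's proof is a one-line remark to this effect; your additional verification that $\beta_n=c/(1+c)$ satisfies \ref{A1} and your handling of ties are both sound elaborations but not required by the paper.
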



\subsection{Asymptotic robustness with respect to group choice}
\label{subsec:asymp_robust}
Note that in Theorem \ref{th:nmd_consistent} no specification on group formation is required. Generally for large samples, Bayesian methods are robust with respect to the prior choice given that the prior distribution follows some regularity conditions. Theorem \ref{th:nmd_consistent} entails that the non-marginal method is consistent for any group choice given that the model and prior distributions satisfy the conditions of Section \ref{subsec:assumptions_shalizi}. Hence, we see that the non-marginal method is asymptotically robust with respect to the choice of groups. 

\section{Asymptotic analyses of multiple testing error rates}
\label{subsec:asymp_mpBFDR}
\subsection{Asymptotic properties of versions of $FDR$}
First we study the convergence properties of $\mfdr$ and $\fdrx$ in this section. 
We show that the convergence rates of the posterior error measures are directly associated with the KL-divergence from the true model.

\begin{theorem}
	\label{theorem:preferability}
	Assume conditions \ref{A1}-\ref{A2}. Let $J_{\min}=\underset{i:d_i^t=1}{\min} J(\bTheta_{i,\bd^t}^c)$ and $H_{\min}=\underset{i:d_i^t=1}{\min} J(H_{0i})$. Then for the non-marginal multiple testing procedure the following hold almost surely:
	\begin{align*}
	\lim_{n\rightarrow\infty}\frac{1}{n}\log \mfdr =-J_{\min};\quad
	\lim_{n\rightarrow\infty}\frac{1}{n}\log \fdrx =-H_{\min}.
	\end{align*}
\end{theorem}

Notably, both $J_{\min}$ and $H_{\min}$ are positive and hence the posterior $FDR$ along with its modified version converge to 0 at an exponential rate with increasing sample size. Interestingly, the convergence rate is in terms of the KL-divergence between the true data generating process $P$ and the class of postulated models $F_{\btheta}$. We see that the posterior error measures have this very interesting property where they truly indicate the divergence from the true data-generating process. 

\begin{remark}
	\label{rem:known_group}
	Even though $\nmd$ is asymptotically consistent for data-dependent group formations, asymptotic convergence rate of $\mfdr$ may not hold in such case. For increasing sample size the group structures may change, resulting in ambiguity in the definition of $\mfdr$. Therefore in Theorem \ref{theorem:preferability} we assume that the group structures are known {\it a priori}.
\end{remark}

So far we have investigated the asymptotic properties of $\mfdr$ and $\fdrx$, which is a valid exercise from the Bayesian perspective, as the data is conditioned upon
in these error measures. We now study the asymptotic properties of $mpBFDR$ and $pBFDR$. $mpBFDR$ is defined as
\begin{equation}
mpBFDR=E_{\bX_n}\left[\mfdr|\delta_{\mathcal M}(\bzero|\bX_n)=0\right],
\label{eq:mpbfdr2}
\end{equation}
where $\bzero$ is the decision configuration that no null hypothesis is rejected.
$pBFDR$ is where the expectation in \eqref{eq:mpbfdr2} is of $\fdrx$. Indeed, expectations
of the error measures are traditionally more popular in multiple testing. The following theorem provides the asymptotic results of $mpBFDR$ and $pBFDR$.
\begin{corollary}
	\label{bfdr_lm}
	Under conditions \ref{A1}-\ref{A2}, for the non-marginal procedure we have
	\begin{align*}
	\lim_{n\rightarrow\infty} mpBFDR =0;\quad
	\lim_{n\rightarrow\infty} pBFDR =0.
	\end{align*}
\end{corollary}
It is important to remark that although the aforementioned expected error measures converge to zero as shown by Corollary \ref{bfdr_lm}, it does not seem to be possible to 
obtain the rates of convergence to zero in general, as in Theorem \ref{theorem:preferability} associated with the corresponding posterior versions.

As discussed in Section \ref{subsec:asymp_robust}, the non-marginal method is robust in the sense that it is consistent for any group structure. However, the convergence rate of the $\mfdr$ shows that it takes into account the dependence among hypotheses through the group structures. Hence, it may lose its effectiveness over $\fdrx$ in case the group choice is injudicious. In practical situations, where sample size is fixed, thoughtful choice of groups is very important.


\subsection{Asymptotic properties of versions of $FNR$}
\label{sec:compare_BFNR}


As in the case of $FDR$, similar results can also be derived for different versions of $FNR$. We state the result in the following theorem.
\begin{theorem}
	\label{corollary:limit_bfnrs}
	Assume conditions \ref{A1} and \ref{A2}. Let $\tilde H_{\min}=\underset{i:d_i^t=0}{\min}  J(H_{1i})$. Then for the non-marginal multiple testing procedure
	\begin{equation}
	\lim_{n\rightarrow\infty}\frac{1}{n}\log \fnrx =-\tilde H_{\min}.\label{eq:H_min}
	\end{equation}
\end{theorem}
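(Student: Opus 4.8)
The plan is to derive the limit directly from the two-sided bound on $\fnrx$ established in Lemma \ref{lemma:fnr_bound}, together with an identification of the exponent governing that bound. Recall that $\fnrx$ is the posterior-$FNR$ evaluated at the decision configuration actually selected by the non-marginal rule; by Theorem \ref{th:nmd_consistent}, under \ref{A1} the selected configuration is $\bd^t$ almost surely for all sufficiently large $n$, so from some $n$ onwards we may replace the generic $\bd$ in the definition of $\fnrx$ by $\bd^t$ and work with $\fnrx = \sum_{i=1}^m (1-d_i^t) v_{in} / \left(\sum_{i=1}^m (1-d_i^t) \vee 1\right)$. Condition \ref{A2} guarantees $\bd^t \neq \bone$, so the denominator $\sum_{i=1}^m (1-d_i^t)$ is a fixed positive integer and the ``$\vee 1$'' is immaterial asymptotically.

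First I would invoke Lemma \ref{lemma:fnr_bound}: for any $\epsilon > 0$ there is $n_1(\epsilon)$ such that for $n > n_1(\epsilon)$,
\[
\exp(-n\epsilon)\,\frac{\sum_{i=1}^m (1-d_i^t) e^{-nJ(\Theta_{i\bd^t})}}{\sum_{i=1}^m (1-d_i^t)}
\le \mfnr
\le \exp(n\epsilon)\,\frac{\sum_{i=1}^m (1-d_i^t) e^{-nJ(\Theta_{i\bd^t})}}{\sum_{i=1}^m (1-d_i^t)}
\]
almost surely. The same argument applied to $\fnrx$ in place of $\mfnr$ — using the marginal bounds \eqref{eq:shalizi1_muller} on $v_{in}$ for indices with $d_i^t = 0$ rather than the joint bounds \eqref{eq:shalizi1}–\eqref{eq:shalizi2} — gives, for $n > n_1(\epsilon)$,
\[
\exp(-n\epsilon)\,\frac{\sum_{i=1}^m (1-d_i^t) e^{-nJ(H_{1i})}}{\sum_{i=1}^m (1-d_i^t)}
\le \fnrx
\le \exp(n\epsilon)\,\frac{\sum_{i=1}^m (1-d_i^t) e^{-nJ(H_{1i})}}{\sum_{i=1}^m (1-d_i^t)}
\]
almost surely. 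Next I would take $\tfrac{1}{n}\log$ throughout. The logarithm of the finite sum $\sum_{i:d_i^t=0} e^{-nJ(H_{1i})}$ is, up to an additive $O(1)$ term bounded by $\log m$, equal to $-n \min_{i:d_i^t=0} J(H_{1i}) = -n\tilde H_{\min}$ — this is the standard ``$\log$-sum-exp is governed by the smallest exponent'' estimate, valid because the number of summands is finite and fixed. Dividing by $n$, the $O(1)$ contributions and the $\pm\epsilon$ both vanish in the limit, giving $\limsup_n \tfrac{1}{n}\log \fnrx \le -\tilde H_{\min} + \epsilon$ and $\liminf_n \tfrac{1}{n}\log \fnrx \ge -\tilde H_{\min} - \epsilon$. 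Since $\epsilon > 0$ is arbitrary, \eqref{eq:H_min} follows.

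The only genuine subtlety — not really an obstacle — is the interchange of ``the configuration selected by the rule'' with ``$\bd^t$''; this is where consistency (Theorem \ref{th:nmd_consistent}) and condition \ref{A1} enter, and where one must be careful that the almost-sure event on which $\widehat{\bd}_n = \bd^t$ eventually holds can be intersected with the almost-sure events from Theorem \ref{th:shalizi} underlying \eqref{eq:shalizi1_muller}. Since all these are countable intersections of probability-one events (the $\epsilon$ can be restricted to a sequence $\epsilon_k \downarrow 0$), the intersection still has probability one, and on it every step above is deterministic. Condition \ref{A2} is used precisely to keep the denominator $\sum_i (1-d_i^t)$ strictly positive, so that $\fnrx$ is a genuine average of the $v_{in}$ and the bound of Lemma \ref{lemma:fnr_bound} is non-degenerate. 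One should also note $\tilde H_{\min} > 0$: for each $i$ with $d_i^t = 0$ one has $J(H_{1i}) > 0$ (stated in the excerpt below \eqref{eq:shalizi2_muller}), and the minimum over the finite nonempty set $\{i : d_i^t = 0\}$ — nonempty by \ref{A2} — is therefore positive, so $\fnrx \to 0$ at the exponential rate $\tilde H_{\min}$.
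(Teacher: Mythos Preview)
Your proposal is correct and follows essentially the same route as the paper: the paper's proof simply refers back to Lemma \ref{theorem:mpBFDR_form1} and Theorem \ref{theorem:preferability}, i.e., establish the two-sided exponential sandwich on $\fnrx$ via consistency and the bounds \eqref{eq:shalizi1_muller}, take $\tfrac{1}{n}\log$, and identify the dominant exponent. The only cosmetic difference is that the paper invokes L'H\^opital's rule to evaluate $\lim_n \tfrac{1}{n}\log\sum_{i:d_i^t=0} e^{-nJ(H_{1i})}$, whereas you use the equivalent elementary $\log$-sum-exp estimate; both give $-\tilde H_{\min}$.
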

Thus we see that or the non-marginal method, $\fnrx$ do asymptotically diminish to zero exponentially fast with the convergence rate being directly proportional to the KL-divergence between from true data generating process. 
$pBFNR$ is defined as follows:
\begin{equation*}
pBFNR=E_{\bX_n}\left[\fnrx|\delta_{\mathcal M}(\bone|\bX_n)=0\right],
\end{equation*}
where $\bone$ is the decision configuration that no null hypothesis is accepted. The following asymptotic result holds for $pBFNR$ as well 
\begin{corollary}
	\label{bfnr_lm}
	Under conditions \ref{A1}-\ref{A2}, for the non-marginal procedure we have
	\begin{equation*}	
	\lim_{n\rightarrow\infty} pBFNR =0.
	\end{equation*}
\end{corollary}
Although Corollary \ref{bfnr_lm} asserts convergence of the relevant versions of $BFNR$ to zero, it does not seem to be possible to provide their rates of convergence,
as in $\fnrx$. This issue is in keeping with the corresponding versions of $BFDR$.

\begin{remark}
	It is proper to envisage possible modification of $FNR$ with respect to the new notions of errors. In Section \ref{sec:mpBFNR} we show that, under a mild assumption, the asymptotic convergence rates of $\fnrx$ and its modified counterpart, are equal. Therefore, in this main article, we continue the relevant discussions with respect to the existing versions of $FNR$ only.
\end{remark}

\section[Controlling type-I error]{Convergence of $FNR_{\bX_n}$ and $BFNR$ when versions of $BFDR$ are $\alpha$-controlled}
\label{sec:asymp_category_a}

We now enforce asymptotic control over $mpBFDR$ and $pBFDR$ in the sense that they 
converge to $\alpha\in (0,a)$, instead of zero, for some $0<a\leq 1$, and study the asymptotic behaviour
of $pBFNR$. Here it is important to point out that \ctn{chandra2017} proved 
that for both $\nmd$ and additive loss-function based methods,
$mpBFDR$ and $pBFDR$ are continuous and non-increasing in $\beta$ and therefore $\beta$
can be tuned to set the type-I errors at any desired size $\alpha$. However, as we show in the asymptotic case,
it is not possible to incur too high type-I error, that is, $a$ can not be arbitrarily close to 1. This is not unexpected, since consistent
methods can not commit arbitrarily large errors asymptotically. Naturally the question arises whether $\alpha$-control
of versions of $BFDR$ is at all necessary. The answer is that- since it is a standard practice in multiple testing
to exercise $\alpha$-control on versions of $FDR$ in order to incur lesser type-II error, 
it is important to investigate what would be the feasible range of values of $\alpha$ to attain in large or even moderately large samples, and for such $\alpha$'s how the type-II error would behave. 
We attempt to address these questions with respect to the non-marginal procedure and additive loss-function based method.

\subsection{Convergence of $mpBFDR$ and $pBFDR$ to $\alpha$ for $\nmd$}
\label{subsec:constant_BFDR}

We begin with the following theorem that provides the bound for the maximum $mpBFDR$ that can be incurred asymptotically. 
\begin{theorem}
	\label{theorem:mpBFDR_alpha}
	In addition to \ref{A1}-\ref{A2}, assume the following: 
	\begin{enumerate}[label={(B\arabic*)}]	
		\item \label{ass_groups} 
		Let each group of a particular set of $m_1~(<m)$ groups out of the total $m$ groups be associated with at least one false null hypothesis, and that
		all the null hypotheses associated with the remaining $m-m_1$ groups be true. Let us further assume that the latter $m-m_1$ groups do not have any overlap with the remaining $m_1$ groups. 
		Without loss of generality assume that $G_1,\ldots,G_{m_1}$ are the groups each consisting of at least one false null and $G_{m_1+1},G_{m_1+2},\cdots,G_{m}$ are the groups
		where all the null hypotheses are true. 	
	\end{enumerate}
	Then the maximum $mpBFDR$ that can be incurred, asymptotically lies in $\left( \frac{1}{\sum_{i=1}^md_i^t+1}, \frac{m-m_1}{\sum_{i=1}^md_i^t+m-m_1} \right)$.
\end{theorem}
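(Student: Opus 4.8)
The plan is to reduce the claim to identifying the decision configuration that the non-marginal rule selects in the small-$\beta$ regime, and then to evaluate $\mfdr$ along it. Since \ctn{chandra2017} showed that at every fixed $n$ the quantity $mpBFDR$ is continuous and non-increasing in $\beta$, the largest value attainable at sample size $n$ is $a_n:=\lim_{\beta\downarrow 0}mpBFDR$, so ``the maximum $mpBFDR$ that can be incurred asymptotically'' is $a:=\lim_{n\to\infty}a_n$ (equivalently, the supremum over sequences $\{\beta_n\}$ of $\limsup_n mpBFDR$). It therefore suffices to (i)~pin down $\widehat{\bd}=\widehat{\bd}(\beta)$ as $\beta\downarrow 0$ for large $n$, (ii)~compute $\lim_n\mfdr$ along that configuration, and (iii)~bound the result.

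For step (i), I would first unpack \ref{ass_groups}: since $i\in G_i$ for every $i$ and the last $m-m_1$ groups are disjoint from the first $m_1$, the $m$ hypotheses split into the block $\{1,\dots,m_1\}=\bigcup_{j\le m_1}G_j$ and the disjoint block $\{m_1+1,\dots,m\}=\bigcup_{j>m_1}G_j$, the latter consisting entirely of true nulls and carrying no dependence link with the former. Feeding the exponential bounds (\ref{eq:shalizi1})--(\ref{eq:shalizi2}) into $f_\beta(\bd)=\sum_i d_i\bigl(w_{in}(\bd)-\beta\bigr)$, three facts should drop out for $n$ large and $\beta$ small. (a)~Every one of the $\sum_i d_i^t$ true alternatives is rejected with its correct group direction, since then $w_{in}(\bd)\to 1$ whereas any other direction or a non-rejection forfeits a near-unit term. (b)~No true null in the first block is rejected: such an $\ell\;(\le m_1)$ has $G_\ell$ among $G_1,\dots,G_{m_1}$, hence $G_\ell$ contains a false null $j$; setting $d_\ell=1$ puts $\bd\in\mathbb D_j^c$, collapsing the near-unit term $w_{jn}(\bd)$ to order $e^{-nJ}$, a loss of order $1$ that overwhelms the gain $w_{\ell n}(\bd)=O(e^{-nJ})$. (c)~Inside the second block at least one hypothesis is rejected (because $w_{\ell n}(\bd)>0$ eventually exceeds $\beta\downarrow 0$ and the block is isolated, so no near-unit term is harmed), but never more than all $m-m_1$ of them; hence the number $K$ of second-block rejections obeys $1\le K\le m-m_1$, its eventual value being dictated by comparison of the KL-rates of the competing sub-configurations inside that block (for instance $K=m-m_1$ when those hypotheses are mutually independent, while $K$ may be as small as $1$ if they form a single group). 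Only the bounds on $K$ will be needed.

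For steps (ii)--(iii): along $\widehat{\bd}$ one has $\sum_i\widehat d_i=\bigl(\sum_i d_i^t\bigr)+K$; by (\ref{eq:shalizi2}) each correctly-rejected true alternative contributes $1-w_{in}(\widehat{\bd})=O(e^{-nJ})=o(1)$ to the numerator of $\mfdr$, whereas by (\ref{eq:shalizi1}) each of the $K$ second-block rejections contributes $1-w_{\ell n}(\widehat{\bd})\to 1$, so
\[
\mfdr\ \longrightarrow\ \frac{K}{\bigl(\sum_{i=1}^m d_i^t\bigr)+K}\qquad\text{almost surely.}
\]
Because $\widehat{\bd}\neq\bzero$ (it rejects the $\sum_i d_i^t\ge 1$ true alternatives, using \ref{A2}), the event $\{\delta_{\mathcal M}(\bzero\,|\,\bX_n)=0\}$ has probability tending to $1$, so $mpBFDR=E_{\bX_n}[\mfdr]+o(1)$, and bounded convergence passes the limit inside, giving $a=K/\bigl(\sum_i d_i^t+K\bigr)$ for the stabilised $K$. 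Since $t\mapsto t/(c+t)$ is increasing for $c>0$, the bounds on $K$ yield
\[
\frac{1}{\sum_{i=1}^m d_i^t+1}\ \le\ a\ \le\ \frac{m-m_1}{\sum_{i=1}^m d_i^t+m-m_1},
\]
which is the assertion.

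I expect the main obstacle to be step (i), parts (b) and (c): showing that the joint optimization, which couples all coordinates through the $w_{in}(\bd)$'s, still refuses to reject any true null outside the isolated ``trivial'' block yet admits between one and $m-m_1$ rejections inside it. This is exactly where \ref{ass_groups} is indispensable --- it keeps every group carrying a rejectable alternative ``clean'' and decouples the purely-null block from the rest. The remaining steps are routine bookkeeping with the exponential bounds and with bounded convergence.
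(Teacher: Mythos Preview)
Your proposal follows essentially the same route as the paper: monotonicity of $mpBFDR$ in $\beta$ reduces the question to $\beta=0$, assumption \ref{ass_groups} decouples the objective $\sum_i d_i w_{in}(\bd)$ into the two blocks $\{1,\dots,m_1\}$ and $\{m_1+1,\dots,m\}$, the first block is shown to force $\bd^t_{1:m_1}$ while the second admits between $1$ and $m-m_1$ rejections, and the limiting ratio $K/\bigl(\sum_i d_i^t+K\bigr)$ is read off and bounded via monotonicity of $t\mapsto t/(c+t)$.

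One technical remark on your step (b). Your argument that rejecting a true null $\ell\le m_1$ ``collapses the near-unit term $w_{jn}(\bd)$'' for the false null $j\in G_\ell$ requires $\ell\in G_j$ (so that the wrong decision at $\ell$ lands in $G_j$ and forces $\bd\in\mathbb D_j^c$); but \ref{ass_groups} only gives $j\in G_\ell$, and the group relation need not be symmetric. The paper sidesteps this by comparing $\sum_{i=1}^{m_1} d_i w_{in}(\bd)$ and $\sum_{i=1}^{m_1} d_i^t w_{in}(\bd^t)$ directly: writing $Q_{\bd}^{m_1c}=\{i\le m_1:\bd_{G_i}\neq\bd^t_{G_i}\}$, the difference reduces to $\sum_{i\in Q_{\bd}^{m_1c}}\bigl(d_i w_{in}(\bd)-d_i^t w_{in}(\bd^t)\bigr)$, whose first piece tends to $0$ by (\ref{eq:shalizi1}) and whose second piece tends to $\sum_{i\in Q_{\bd}^{m_1c}} d_i^t$, which the paper argues is strictly positive from \ref{ass_groups}. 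The remainder of your outline --- the lower bound via at least one second-block rejection, the upper bound via at most $m-m_1$, and the passage from $\mfdr$ to $mpBFDR$ by bounded convergence together with $P\bigl(\delta(\bzero\mid\bX_n)=0\bigr)\to 1$ --- matches the paper exactly.
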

\begin{remark}
	\label{remark_contradict}
	The proof of Theorem \ref{theorem:mpBFDR_alpha} crucially uses the result 
	that $mpBFDR$ is non-increasing with $\beta$. It can be easily seen that this monotonicity with respect to $\beta$ holds for $mFDR_{\bX_n}$ as well. Hence Theorem \ref{theorem:mpBFDR_alpha}
	is also valid for $mFDR_{\bX_n}$.
\end{remark}

\begin{remark}
	\label{remark:single_G}
	Theorem \ref{theorem:mpBFDR_alpha} holds when $G_i\subset\{1,\ldots,m\}$ for at least one $i\in\{1,\ldots,m\}$. But if
	$G_i=\{1,\ldots,m\}$ for $i=1,\ldots,m$, then $mpBFDR\rightarrow 0$ as $n\rightarrow\infty$, for any
	sequence $\beta_n\in[0,1]$. This is because in this case there does not exist any $\bd\neq\bd^t$ such that
	$$P\left(\sum_{i=1}^md_iw_{in}(\bd)-\sum_{i=1}^md^t_iw_{in}(\bd^t)
	>\beta_n\left(\sum_{i=1}^md_i-\sum_{i=1}^md^t_i\right)\right)>0,$$
	as $n\rightarrow\infty$.
\end{remark}


Theorem \ref{theorem:mpBFDR_alpha} also clarifies that for any arbitrary configuration of groups, it is not possible to commit arbitrarily large error when the sample size is large enough. The joint structure provides a safeguard against incurring large errors. However in practical situations dealing with real life data, it is common practice to control type-I error at some pre-specified level $\alpha~(>0)$ both in single and multiple hypothesis testing problems, which renders the very important task of investigating the feasible range of $\alpha$. 

In this regard, \ref{ass_groups} is the condition under which possible values of type-I error to be controlled are available, at least for large $n$. Note that to incur type-I error it is required to reject some true null hypotheses. As the grouping structures prevent from committing arbitrary error by the non-marginal procedure, \ref{ass_groups} is required. By virtue of this condition, there are some true null hypotheses in isolation which can be rejected. In the following theorem we provide an asymptotic bound on the maximum type-I error that can be incurred.

\begin{theorem}
	\label{corollary:beta_n0}
	
	Assume condition \ref{ass_groups} and let $mpBFDR_\beta$ denote the procured $mpBFDR$ in the non-marginal procedure where the penalizing constant is $\beta$. Suppose
	\begin{align}
	\lim_{n\rightarrow\infty} mpBFDR_{\beta=0}= E.\label{eq:lim_mpbfdr}
	\end{align}
	Then, for any $\alpha<E$ and $\alpha\in\left(\frac{1}{\sum_{i=1}^md^t_i+1},\frac{m-m_1}{\sum_{i=1}^md_i^t+m-m_1} \right)$, 
	there exists a sequence $\beta_n\rightarrow 0$ such that $mpBFDR_{\beta_n}\rightarrow\alpha$ as $n\rightarrow\infty$.
\end{theorem}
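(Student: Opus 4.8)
Fix $\alpha$ as in the statement and, for each $n$, write $g_n(\beta):=mpBFDR_\beta$ for the $mpBFDR$ of the non-marginal procedure at sample size $n$ with penalizing constant $\beta$. As recalled at the beginning of Section~\ref{sec:asymp_category_a} and established in \ctn{chandra2017}, $g_n$ is continuous and non-increasing on $[0,1]$; moreover $g_n(\beta)$ is well defined on $[0,\tfrac{1}{2}]$ once $n$ is large, since at $\bd^t$ one has $f_\beta(\bd^t)=\sum_{i:d_i^t=1}\bigl(w_{in}(\bd^t)-\beta\bigr)$, which by \eqref{eq:shalizi2} and \ref{A2} is eventually positive almost surely, uniformly over $\beta\le\tfrac{1}{2}$, so that $P\left(\delta_{\mathcal M}(\bzero|\bX_n)=0\right)\to 1$. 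The plan is: (i) for each large $n$ produce a $\beta_n$ with $g_n(\beta_n)=\alpha$ by the intermediate value theorem, and (ii) deduce $\beta_n\to 0$ from the monotonicity of $g_n$ and Theorem~\ref{bfdr_lm}.

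\textbf{Step (i): bracketing $\alpha$.} Apply Theorem~\ref{bfdr_lm} to the constant sequence $\beta\equiv\tfrac{1}{2}$, which satisfies \ref{A1}: it gives $g_n(\tfrac{1}{2})\to 0$, and since $\alpha>\tfrac{1}{\sum_{i=1}^m d_i^t+1}>0$ there is an index after which $g_n(\tfrac{1}{2})<\alpha$. At the other endpoint, $g_n(0)=mpBFDR_{\beta=0}\to E$ by \eqref{eq:lim_mpbfdr} and $\alpha<E$ by hypothesis, so eventually $g_n(0)>\alpha$. Hence there is $N_0$ such that for all $n\ge N_0$ the continuous function $g_n$ on $[0,\tfrac{1}{2}]$ satisfies $g_n(\tfrac{1}{2})<\alpha<g_n(0)$, and the intermediate value theorem furnishes $\beta_n\in(0,\tfrac{1}{2})$ with $g_n(\beta_n)=\alpha$; for $n<N_0$ set $\beta_n$ arbitrarily. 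Then $mpBFDR_{\beta_n}=\alpha$ for every $n\ge N_0$, so $mpBFDR_{\beta_n}\to\alpha$.

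\textbf{Step (ii): $\beta_n\to 0$ and the main obstacle.} If $\beta_n\not\to 0$, then $\beta_{n_k}\ge c$ along some subsequence for some $c\in(0,\tfrac{1}{2}]$; monotonicity of $g_{n_k}$ gives $g_{n_k}(c)\ge g_{n_k}(\beta_{n_k})=\alpha$ for $k$ large, while Theorem~\ref{bfdr_lm} applied to the constant sequence $\beta\equiv c$ (which obeys \ref{A1}) gives $g_{n_k}(c)\to 0$, contradicting $\alpha>0$; hence $\beta_n\to 0$ and the proof is complete. The only substantive inputs are (a) the continuity and monotonicity of $mpBFDR$ in $\beta$ from \ctn{chandra2017}, and (b) the two-sided sandwich $g_n(0)\to E>\alpha$ against $g_n(\beta)\to 0$ for each fixed $\beta\in(0,1)$ from Theorem~\ref{bfdr_lm}; assumption \ref{ass_groups} and Theorem~\ref{theorem:mpBFDR_alpha} enter only upstream, ensuring that $E$ exceeds $\tfrac{1}{\sum_i d_i^t+1}$ so that the admissible range of $\alpha$ is nonempty, while the hypothesised lower bound on $\alpha$ just keeps $\alpha$ bounded away from $0$, which is precisely what makes $[g_n(\tfrac{1}{2}),g_n(0)]$ straddle $\alpha$ for large $n$ and what drives the contradiction in Step (ii). The one delicate point, handled above, is the well-definedness of $mpBFDR_{\beta_n}$ along the constructed sequence (positivity of the conditioning event), which follows from \ref{A2} and \eqref{eq:shalizi2}.
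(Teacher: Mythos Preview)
Your proof is correct and follows essentially the same route as the paper's: use continuity and monotonicity of $mpBFDR$ in $\beta$ (from \ctn{chandra2017}) together with $g_n(0)\to E>\alpha$ to produce $\beta_n$ via the intermediate value theorem, then argue by contradiction that $\beta_n\to 0$ because otherwise Theorem~\ref{bfdr_lm} (equivalently Lemma~\ref{theorem:mpBFDR_form1}) would force $mpBFDR_{\beta_n}\to 0$. Your version is in fact slightly more explicit than the paper's, since you supply the lower bracket $g_n(\tfrac{1}{2})\to 0<\alpha$ needed for the IVT (the paper simply asserts existence of $\beta_n\in(0,1)$) and you address well-definedness of the conditioning event in $mpBFDR_{\beta_n}$, which the paper leaves implicit.
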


Since $mpBFDR$ is decreasing in $\beta$, $\beta$ can be interpreted as a balance provider between type-I and type-II errors. Corollary \ref{bfdr_lm} shows that $mpBFDR$ decays to $0$ when $\liminf_{n\rightarrow\infty}\beta_n>0$ and Theorem \ref{corollary:beta_n0} shows that for $\alpha$-control, we must have $\lim_{n\rightarrow\infty}\beta_n=0$. Since $mpBFDR$ is decreasing in $\beta$, it intuitively indicates that, in the case of $\alpha$-control of 
$mpBFDR$ the sequence $\{\beta_n\}$ has to be dominated by any $\{\beta_n\}$ sequence for which Corollary \ref{bfdr_lm} holds. 
Theorem \ref{corollary:beta_n0} formalizes this intuition and shows that a smaller sequence of $\beta_n$ has to be taken for $\alpha$-control.

From the proofs of Theorem \ref{theorem:mpBFDR_alpha} and \ref{corollary:beta_n0}, it can be seen that replacing $w_{in}(\hat \bd)$ by $v_{in}$ does not affect the results. Hence we state the following corollary.
\begin{corollary}
	\label{theorem:pBFDR_alpha}
	
	Assume condition \ref{ass_groups} and let $pBFDR_\beta$ denote the procured $pBFDR$ in the non-marginal procedure where the penalizing constant is $\beta$. Suppose
	\begin{align*}
	\lim_{n\rightarrow\infty} pBFDR_{\beta=0}= E',
	\end{align*}
	Then, for any $\alpha<E'$ and $\alpha\in\left(\frac{1}{\sum_{i=1}^md^t_i+1},\frac{m-m_1}{\sum_{i=1}^md_i^t+m-m_1} \right)$, 
	there exists a sequence $\beta_n\rightarrow 0$ such that $pBFDR_{\beta_n}\rightarrow\alpha$ as $n\rightarrow\infty$.
\end{corollary}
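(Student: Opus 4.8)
As remarked just before the statement, the proofs of Theorems \ref{theorem:mpBFDR_alpha} and \ref{corollary:beta_n0} invoke the joint posterior probabilities $w_{in}(\bd)$ only through the representation of $\mfdr$ furnished by Lemma \ref{theorem:mpBFDR_form1}; every other ingredient is a statement about the non-marginal decision rule $\dnm$ and the scalar penalizing constant $\beta$. The plan is therefore to re-run those two arguments with $w_{in}(\widehat\bd)$ replaced throughout by $v_{in}$ and with Lemma \ref{theorem:mpBFDR_form1} replaced by its $\fdrx$-counterpart, Corollary \ref{theorem:pBFDR_form}. Under this substitution the exponents change from $J(\bTheta_{i\bd^t}^c)$ to $J(H_{0i})$, but since $J(H_{0i})>0$ whenever $d_i^t=1$ (recorded in Section \ref{subsec:notations}), the strict exponential separation between true-null and false-null coordinates that the argument relies on is preserved, so no step is disturbed.

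Concretely I would proceed as follows. (i) Note that condition \ref{ass_groups} is unchanged: it constrains the groups $G_i$ of the non-marginal rule, which is still the rule being run here, so there are $m-m_1$ true nulls sitting in groups disjoint from the $m_1$ groups carrying the false nulls, and, exactly as in Theorem \ref{theorem:mpBFDR_alpha}, these isolated true nulls are precisely the ones whose rejection can inflate the error. (ii) Reproduce the combinatorial step of Theorem \ref{theorem:mpBFDR_alpha}, now reading the false-discovery count against $v_{in}$ via Corollary \ref{theorem:pBFDR_form}: one finds that $v_{in}\to1$ for $i$ with $d_i^t=1$ and $v_{in}\to0$ exponentially for $i$ with $d_i^t=0$ (from \eqref{eq:shalizi1_muller}--\eqref{eq:shalizi2_muller}), whence the asymptotically attainable values of $pBFDR$ fill an interval inside $\left(\frac{1}{\sum_{i=1}^m d_i^t+1},\frac{m-m_1}{\sum_{i=1}^m d_i^t+m-m_1}\right)$, the right endpoint arising from rejecting all $m-m_1$ isolated true nulls at $\beta=0$ and the left endpoint from rejecting a single one. (iii) Invoke the continuity and non-increasingness of $\beta\mapsto pBFDR_\beta$ for the non-marginal procedure, due to \ctn{chandra2017}, together with the hypothesis $pBFDR_{\beta=0}\to E'$. (iv) For a fixed target $\alpha<E'$ in the interval, mimic the construction of Theorem \ref{corollary:beta_n0}: for each large $n$, continuity in $\beta$ and the fact that $pBFDR$ is near $E'>\alpha$ at $\beta=0$ while near $0<\alpha$ at any fixed $\beta$ bounded away from $0$ (the latter by consistency of $\dnm$, Theorem \ref{th:nmd_consistent}, since then $1-v_{in}\to0$ for the rejected coordinates) yield a $\beta_n$ with $pBFDR_{\beta_n}=\alpha$; monotonicity in $\beta$ then forces $\beta_n\to0$, because any $\beta_n\geq\delta>0$ along a subsequence would give $\alpha=pBFDR_{\beta_n}\leq pBFDR_\delta\to0$, a contradiction.

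The one genuine point to verify — the main, though minor, obstacle — is that replacing $w_{in}(\widehat\bd)$ by $v_{in}$ leaves intact the description, used inside the proof of Theorem \ref{theorem:mpBFDR_alpha}, of which configuration $\widehat\bd$ is optimal for $\beta$ near $0$, so that the error-inflating rejections are again exactly those of the isolated true nulls. This is immediate once one observes that \eqref{eq:shalizi1_muller}--\eqref{eq:shalizi2_muller} are precisely the $w_{in}$-bounds \eqref{eq:shalizi1}--\eqref{eq:shalizi2} specialised to singleton groups, with the marginal KL rates $J(H_{ki})$ in place of the joint ones; with this in hand every line of the proofs of Theorems \ref{theorem:mpBFDR_alpha} and \ref{corollary:beta_n0} transcribes verbatim, and the corollary follows.
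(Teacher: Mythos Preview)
Your proposal is correct and follows exactly the paper's route: the paper's ``proof'' is simply the remark preceding the corollary that in the proofs of Theorems~\ref{theorem:mpBFDR_alpha} and~\ref{corollary:beta_n0} one may replace $w_{in}(\hat\bd)$ by $v_{in}$ in the error computation without affecting the argument. One clarification on your final paragraph: there is in fact nothing to verify about ``which configuration $\widehat\bd$ is optimal,'' since the non-marginal decision rule (and hence $\widehat\bd$ at each $\beta$) is left completely untouched---only the numerator $\sum_i \hat d_i(1-w_{in}(\hat\bd))$ in the error measure is swapped for $\sum_i \hat d_i(1-v_{in})$, and both have the same limit because $v_{in}\to 1$ for $d_i^t=1$ and $v_{in}\to 0$ for $d_i^t=0$.
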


We now investigate, as special cases of the above results, the situations where $G_i=\{i\}$ for all $i$.
Recall that in this case the additive loss function based methods are special cases of the non-marginal procedure. 
In such cases, $mpBFDR$ also boils down to $pBFDR$.
The following theorem gives the result for asymptotic $\alpha$-control of $pBFDR$ in this situation.
\begin{theorem}
	\label{theorem:pBFDR_alpha2}
	Let $m_0~(<m)$ be the number of true null hypotheses. Then for any $0<\alpha<\frac{m_0}{m}$, there exists a sequence $\beta_n\rightarrow 0$
	as $n\rightarrow\infty$ such that for the additive loss function based methods
	\begin{equation*}
	\underset{n\rightarrow\infty}{\lim}~pBFDR_{\beta_n} =\alpha.
	\end{equation*}
\end{theorem}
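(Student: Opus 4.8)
Recall that the additive loss based method is the non-marginal procedure with $G_i=\{i\}$ for all $i$, so that $w_{in}(\bd)=v_{in}$, the optimal rule is the threshold $d_i=I(v_{in}>\beta)$, $\mfdr$ reduces to $\fdrx=\bigl(\sum_{i=1}^m d_i(1-v_{in})\bigr)/\bigl(\sum_{i=1}^m d_i\vee 1\bigr)$, and $mpBFDR$ equals $pBFDR$. The plan is to use the fact, proved in \ctn{chandra2017}, that $\beta\mapsto pBFDR_\beta$ is continuous and non-increasing on $[0,1]$; to pin down its two relevant limiting values as $n\to\infty$; and then to produce $\beta_n$ by an intermediate value argument and deduce $\beta_n\to0$ from monotonicity.

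First I would compute the largest value $pBFDR$ can asymptotically attain, namely the value at $\beta=0$. Since $v_{in}=\postp(\Theta_{1i})>0$ a.s., at $\beta=0$ every hypothesis is rejected, so $\delta_{\mm}(\bzero|\bX_n)=0$ holds a.s., the conditioning in the definition of $pBFDR$ is vacuous, and $pBFDR_{\beta=0}=\frac{1}{m}\sum_{i=1}^m\bigl(1-E_{\bX_n}[v_{in}]\bigr)$. By \eqref{eq:shalizi1_muller}--\eqref{eq:shalizi2_muller}, $J(H_{1i})>0$ whenever $d_i^t=0$ and $J(H_{0i})>0$ whenever $d_i^t=1$, so $v_{in}\to0$ a.s.\ at each of the $m_0$ true nulls and $v_{in}\to1$ a.s.\ at each of the $m-m_0$ false nulls; since $\fdrx\in[0,1]$, bounded convergence gives $pBFDR_{\beta=0}\to m_0/m$. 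Next, for any fixed $\beta_0\in(0,1)$ the constant sequence $\beta_n\equiv\beta_0$ satisfies \ref{A1}, so Theorem \ref{bfdr_lm}, which applies to the additive loss method as the case $G_i=\{i\}$, gives $pBFDR_{\beta_0}\to0$.

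With these two limits in hand, fix $\alpha\in(0,m_0/m)$ and any $\beta_0\in(0,1)$. For all sufficiently large $n$ we have $pBFDR_{\beta=0}>\alpha>pBFDR_{\beta_0}$, so continuity of $\beta\mapsto pBFDR_\beta$ on $[0,\beta_0]$ yields a $\beta_n\in(0,\beta_0)$ with $pBFDR_{\beta_n}=\alpha$. Finally $\beta_n\to0$: if $\beta_{n_k}\ge c>0$ along a subsequence, monotonicity would give $\alpha=pBFDR_{\beta_{n_k}}\le pBFDR_c\to0$, contradicting $\alpha>0$.

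The step needing the most care is the identity $\lim_{n\to\infty}pBFDR_{\beta=0}=m_0/m$: one must check that at $\beta=0$ all hypotheses are a.s.\ rejected, so that the conditioning event genuinely has probability one, and then justify interchanging limit and expectation, for which the a.s.\ exponential bounds \eqref{eq:shalizi1_muller}--\eqref{eq:shalizi2_muller} together with the uniform bound $\fdrx\le1$ suffice via dominated convergence. The remaining arguments are soft, relying only on the monotonicity and continuity of $pBFDR_\beta$ in $\beta$ and on Theorem \ref{bfdr_lm}; note in particular that, unlike Corollary \ref{theorem:pBFDR_alpha}, no finite-sample distributional information about the $v_{in}$ is used, and since here the decision rule is a plain threshold without any group-induced coarsening, $pBFDR_\beta$ sweeps continuously over all of $(0,m_0/m)$, so no lower restriction on $\alpha$ is needed.
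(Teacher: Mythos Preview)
Your proposal is correct and follows essentially the same route as the paper: both compute $\lim_{n\to\infty}pBFDR_{\beta=0}=m_0/m$ by noting that at $\beta=0$ every hypothesis is rejected and $v_{in}\to I(d_i^t=1)$, then invoke the continuity and monotonicity of $\beta\mapsto pBFDR_\beta$ from \ctn{chandra2017} to extract $\beta_n$ with $pBFDR_{\beta_n}=\alpha$, and finally argue $\beta_n\to0$ by contradiction with the convergence $pBFDR\to0$ under \ref{A1}. Your version is slightly more explicit in pinning down both endpoints of the intermediate value argument (the paper simply refers back to the proof of Theorem~\ref{corollary:beta_n0}), but the logic is the same.
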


In the above, we have noted that $mpBFDR$ reduces to $pBFDR$ when $G_i=\{i\}$ for all $i$. However, for any additive loss function based multiple testing method, we may still envisage
the measure $mpBFDR$ where the definition of $mpBFDR$ considers the adequate dependent structure by means of non-singleton $G_i$'s. This has the advantage of yielding 
non-marginal decisions even though the actual criterion to be optimized is a sum of loss functions
associated with individual parameters and decisions. 
In the following theorem we show that the same asymptotic result as 
Theorem \ref{theorem:pBFDR_alpha2} also holds for $mpBFDR$ in the case of additive loss functions, without assumption \ref{ass_groups}.
\begin{theorem}
	\label{theorem:mpBFDR_alpha2}
	Let $\alpha$ be the desired level of significance where $0<\alpha < \frac{m_0}{m}$, $m_0~(<m)$ being the number of true null hypotheses. Then there exists a sequence $\beta_n\rightarrow 0$
	as $n\rightarrow\infty$ such that for the additive loss function based method
	\begin{equation*}
	\underset{n\rightarrow\infty}{\lim}~mpBFDR_{\beta_n}=\alpha.
	\end{equation*}
\end{theorem}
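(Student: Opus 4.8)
\textbf{Proof proposal for Theorem \ref{theorem:mpBFDR_alpha2}.}

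The plan is to mimic the proof of Theorem \ref{theorem:pBFDR_alpha2} and adapt it to the modified measure $mpBFDR$, exploiting the fact that for the additive loss-based method the decisions are taken marginally through the thresholding rule $d_i = I(v_{in} > \beta)$, even though $mpBFDR$ is computed using the joint terms $w_{in}(\bd)$ dictated by the (possibly non-singleton) groups $G_i$. First I would recall that by Theorem \ref{th:nmd_consistent} and its corollary the additive-loss decision rule is consistent, so eventually $\hat\bd = \bd^t$, and hence $mpBFDR_\beta$ for any fixed $\beta$ bounded away from $0$ and $1$ converges to $0$ by Theorem \ref{bfdr_lm}. The target level $\alpha \in (0, m_0/m)$ must therefore be achieved by a sequence $\beta_n \to 0$, exactly as in Theorem \ref{theorem:pBFDR_alpha2}. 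The key point distinguishing this from the earlier $pBFDR$ result is that there is no analogue of assumption \ref{ass_groups} needed here, because in the additive-loss setting the decision for hypothesis $i$ is governed solely by $v_{in}$ crossing $\beta$, so each individual true null can be ``flipped'' into a rejection by taking $\beta$ small, regardless of how the groups overlap; the grouping enters only cosmetically in the definition of the error measure.

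The main computation is to evaluate $\lim_{n\to\infty} mpBFDR_{\beta_n}$ along a sequence $\beta_n\to 0$ that is chosen slowly enough. I would argue that when $\beta_n \to 0$ at a suitable rate, the rule $d_i = I(v_{in} > \beta_n)$ rejects every hypothesis whose corresponding $v_{in}$ does not decay too fast; using the bounds \eqref{eq:shalizi1_muller}--\eqref{eq:shalizi2_muller}, for $i$ with $d_i^t = 1$ one has $v_{in}\to 1$ so $d_i = 1$ eventually, while for $i$ with $d_i^t = 0$ one has $v_{in}\approx e^{-nJ(H_{1i})}$, so whether $d_i = 1$ or $0$ depends on the relative decay rates of $\beta_n$ and $e^{-nJ(H_{1i})}$. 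By choosing $\beta_n\to 0$ appropriately (e.g. $\beta_n$ comparable to $e^{-c n}$ for a well-chosen $c$, or more crudely controlling how many of the $m_0$ true nulls get rejected), one can arrange that a prescribed number $k$ of the $m_0$ true nulls are rejected and the rest retained; then in the numerator $\sum_i d_i(1 - w_{in}(\bd))$ the contributions from true nulls that are rejected tend to $1$ (since $w_{in}(\bd)\to 0$ there), those from true alternatives that are rejected tend to $0$, and the denominator $\sum_i d_i \vee 1$ tends to $\sum_i d_i^t + k$ (the number of true rejections plus the $k$ false rejections). This yields $mpBFDR_{\beta_n}\to k/(\sum_i d_i^t + k) = k/(m - m_0 + k)$. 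Since $k$ ranges over $\{0,1,\ldots,m_0\}$ this gives a discrete set of limiting values with supremum $m_0/m$; to hit an arbitrary $\alpha$ strictly between two consecutive such values I would use the monotonicity and continuity of $mpBFDR_\beta$ in $\beta$ (quoted from \ctn{chandra2017}) together with an intermediate-value argument, interpolating $\beta_n$ between the two rates that bracket $\alpha$, exactly the device used in Theorem \ref{theorem:pBFDR_alpha2}.

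The step I expect to be the main obstacle is making the interpolation rigorous: one must produce a single sequence $\beta_n\to 0$ for which $mpBFDR_{\beta_n}$ converges to the exact value $\alpha$, not merely oscillates between the bracketing discrete limits. The clean way is to invoke, for each fixed $n$, the continuity and monotonicity of $\beta\mapsto mpBFDR_\beta$ to solve $mpBFDR_{\beta} = \alpha$ for $\beta = \beta_n$ (possible once $n$ is large enough that the range of $mpBFDR_\beta$ as $\beta$ varies over $(0,1)$ contains $\alpha$, which holds because at $\beta$ small the measure exceeds $\alpha$ and at $\beta$ bounded away from $0$ it is below $\alpha$ by Theorem \ref{bfdr_lm}), and then to check that the resulting $\beta_n$ necessarily tends to $0$ — for if it did not, a subsequence bounded away from $0$ would force $mpBFDR_{\beta_n}\to 0$ along that subsequence, contradicting $mpBFDR_{\beta_n} = \alpha > 0$. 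This reproduces the argument of Theorem \ref{theorem:pBFDR_alpha2} verbatim except that the error measure uses $w_{in}$ in place of $v_{in}$, and since the decision rule itself is unchanged the asymptotic bounds \eqref{eq:shalizi1}--\eqref{eq:shalizi2} substitute directly for \eqref{eq:shalizi1_muller}--\eqref{eq:shalizi2_muller} wherever the numerator is estimated, so no new assumption is required.
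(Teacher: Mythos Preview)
Your overall architecture in the third paragraph is the paper's proof: for each large $n$, use continuity and monotonicity of $\beta\mapsto mpBFDR_\beta$ (from \ctn{chandra2017}) to solve $mpBFDR_{\beta_n}=\alpha$, then argue $\beta_n\to 0$ by contradiction with Theorem \ref{bfdr_lm}. The paper, however, bypasses the whole computation in your second paragraph with a one-line observation you do not use: $mpBFDR_\beta\geq pBFDR_\beta$ for every method and every $\beta$, simply because $w_{in}(\bd)\leq v_{in}$ always. Since the proof of Theorem \ref{theorem:pBFDR_alpha2} gives $pBFDR_{\beta=0}\to m_0/m$, it follows that $mpBFDR_{\beta=0}>m_0/m-\epsilon>\alpha$ for all large $n$, and the intermediate-value step is immediate. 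This is exactly why \ref{ass_groups} is not needed here, a point the paper stresses in the remark following the theorem.

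Your direct computation in the second paragraph contains an error. You assert that for a rejected true alternative $i$ (that is, $d_i^t=1$, $\hat d_i=1$) the contribution $1-w_{in}(\hat\bd)$ tends to $0$. But $w_{in}(\hat\bd)$ is the posterior probability that $\theta_i\in\Theta_{1i}$ \emph{and} $\theta_j\in\Theta_{\hat d_j j}$ for every other $j\in G_i$; if some $j\in G_i$ has $\hat d_j\neq d_j^t$ (for instance, at $\hat\bd=\bone$ any true null in $G_i$), then $w_{in}(\hat\bd)\to 0$ and the contribution is $1$, not $0$. Consequently your limiting values $k/(m-m_0+k)$ are undercounts in general. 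The error happens to be in the safe direction---the actual limit of $mpBFDR_{\beta=0}$ is at least $m_0/m$, so the intermediate-value argument still goes through---but this is precisely what the inequality $mpBFDR\geq pBFDR$ delivers cleanly, with no group-by-group accounting.
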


It is interesting that for the additive loss function based method, Theorem \ref{theorem:mpBFDR_alpha2} holds without condition \ref{ass_groups}. This condition is an added imposition 
to study the theoretical properties of the non-marginal procedure when $mpBFDR$ is controlled at level $\alpha$. \ref{ass_groups} ensures that there are some isolated groups of hypotheses. 
Although there is no notion of grouping in the additive loss function, as we pointed out above, $mpBFDR$ does correspond to groups that are not singletons. 
However, $mpBFDR (\mathcal M)\geq pBFDR (\mathcal M)$ for any multiple testing method $\mathcal{M}$, for arbitrary sample size, and this crucially ensures that
the result asserted by Theorem \ref{theorem:mpBFDR_alpha2} goes through even without \ref{ass_groups}. 

\begin{remark}
	Note that Theorems \ref{corollary:beta_n0}-\ref{theorem:mpBFDR_alpha2} and Corollary \ref{theorem:pBFDR_alpha} 
	 use continuity of the expected versions of $FDR$ with respect to $\beta$, in addition to their non-increasing nature with respect to $\beta$.
	The continuity property need not be satisfied by the corresponding Bayesian versions given the data, and hence we can not assert that the aforementioned results continue to hold
	for the corresponding Bayesian versions of $FDR$ (conditional on the data).
\end{remark}

\begin{remark}
	We have already discussed in the context of Theorems \ref{theorem:mpBFDR_alpha} and \ref{corollary:beta_n0} that condition \ref{ass_groups} is crucial for $\alpha$-control 
	for the non-marginal method, and without the assumption, $mpBFDR$ would diminish to zero asymptotically. This signifies that it is difficult to commit errors by the non-marginal method thanks to its
	dependence structure, so that extra assumption is needed for positive $\alpha$-control. On the other hand, Theorems \ref{theorem:pBFDR_alpha2} and \ref{theorem:mpBFDR_alpha2} show that
	for other multiple testing methods based on additive loss, $\alpha$-control is possible without \ref{ass_groups}, not only with respect to $pBFDR$ but also with respect to $mpBFDR$,
	which includes the dependence structure in its definition. 
It certifies that if the underlying multiple testing procedure does not consider dependence, then however sensible the underlying model is, 
	the errors can be larger compared to the non-marginal procedure.
\end{remark}

\subsection{Asymptotic properties of type-II errors  when $mpBFDR$ and $pBFDR$ are asymptotically controlled at $\alpha$}
\begin{theorem}
	\label{theorem:mpBFNR_alpha}
	Assume condition \ref{ass_groups}. Then for asymptotic $\alpha$-control of $mpBFDR$ in the non-marginal procedure the following holds almost surely:
	\begin{equation*}
	\limsup_{n\rightarrow\infty} \fnrx \leq -\tilde H_{\min}.
	\end{equation*}
\end{theorem}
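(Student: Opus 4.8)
Recalling that the quantity of interest here is really the exponential rate $\tfrac1n\log\fnrx$ (in the sense of Theorem~\ref{corollary:limit_bfnrs}), the plan is to reduce the assertion to the exponential decay of the posterior probabilities $v_{in}$ along the coordinates that the procedure eventually accepts. Let $\widehat{\bd}$ be the non-marginal decision for the sequence $\beta_n$ realizing the asymptotic $\alpha$-control of $mpBFDR$ (by Theorem~\ref{corollary:beta_n0} such a sequence exists and satisfies $\beta_n\to0$), and set $A_n=\{i:\widehat d_i=0\}$. As the rule is non-randomized, $\fnrx=\big(\sum_{i\in A_n}v_{in}\big)/\big(|A_n|\vee1\big)\le\sum_{i\in A_n}v_{in}$, and $\fnrx=0$ if $A_n=\emptyset$. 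By \eqref{eq:shalizi1_muller}, for every $i$ with $d_i^t=0$ and $n\ge n_0(\epsilon)$ one has $v_{in}<e^{-n(J(H_{1i})-\epsilon)}\le e^{-n(\tilde H_{\min}-\epsilon)}$, while by \eqref{eq:shalizi2_muller}, $v_{in}\to1$ for every $i$ with $d_i^t=1$. Since $m$ is finite, $|A_n|\le m$, so a single false-null coordinate in $A_n$ would keep $\fnrx$ bounded below by a positive constant; hence it suffices to prove that, almost surely, $\widehat d_i=1$ for every $i$ with $d_i^t=1$ and all large $n$.

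For this step condition~\ref{ass_groups} is essential. Under \ref{ass_groups} the index set partitions as $\{1,\dots,m_1\}=G_1\cup\cdots\cup G_{m_1}$ (the groups carrying at least one false null) and $\{m_1+1,\dots,m\}=G_{m_1+1}\cup\cdots\cup G_m$ (the isolated all-true-null groups), with no group straddling the two parts; since $w_{in}(\bd)$ depends only on $\{d_j:j\in G_i\}$, the objective $f_{\beta_n}(\bd)=\sum_i d_i(w_{in}(\bd)-\beta_n)$ splits into a term depending only on $\bd_{\{1,\dots,m_1\}}$ and a term depending only on $\bd_{\{m_1+1,\dots,m\}}$, so the maximization defining $\widehat{\bd}$ separates over the two blocks. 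On $\{m_1+1,\dots,m\}$ the tuned rate of $\beta_n\to0$ produces exactly the Type-I error needed for $\alpha$-control. On $\{1,\dots,m_1\}$ I would show that the maximizer still rejects every false null: if $\widehat d_{i_0}=0$ for a false null $i_0$, then un-rejecting gains $\approx w_{i_0n}(\bd^t)-\beta_n$, with $w_{i_0n}(\bd^t)\to1$ by \eqref{eq:shalizi2}, whereas every other $w_{jn}$-term that its presence could compensate is bounded by the posterior mass of the false specification $H_{0i_0}$, i.e. by $1-v_{i_0n}=O(e^{-n(J(H_{0i_0})-\epsilon)})$ with $J(H_{0i_0})>0$; comparing $\widehat{\bd}$ with the configuration that carries $\bd^t$ on the whole block $\{1,\dots,m_1\}$ then yields a strict improvement for large $n$ unless all false nulls are already rejected, contradicting optimality. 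The main obstacle is making this precise — in particular ruling out clusters of mutually reinforcing incorrect decisions inside the overlapping groups $G_1,\dots,G_{m_1}$, which must be handled by flipping a whole such cluster to its true decisions simultaneously — and this is in effect the analysis already carried out in the proofs of Theorems~\ref{th:nmd_consistent} and \ref{corollary:beta_n0}.

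Granting the claim, for all large $n$ every $i\in A_n$ has $d_i^t=0$, so $\fnrx\le\sum_{i\in A_n}v_{in}\le m\,e^{-n(\tilde H_{\min}-\epsilon)}$ for $n\ge n_0(\epsilon)$ (and $\fnrx=0$ if $A_n=\emptyset$). Taking $\tfrac1n\log$ and then $\limsup_{n\to\infty}$ gives $\limsup_{n\to\infty}\tfrac1n\log\fnrx\le-(\tilde H_{\min}-\epsilon)$ for every $\epsilon>0$; letting $\epsilon\downarrow0$ yields $\limsup_{n\to\infty}\tfrac1n\log\fnrx\le-\tilde H_{\min}$, the claimed bound. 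The inequality — as opposed to the equality of Theorem~\ref{corollary:limit_bfnrs} — reflects that under $\alpha$-control $A_n$ may be strictly smaller than $\{i:d_i^t=0\}$, so the Type-II error can decay strictly faster.
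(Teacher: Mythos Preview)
Your approach is essentially the paper's: invoke the $\beta_n\to0$ sequence from Theorem~\ref{corollary:beta_n0}, use the block decomposition under \ref{ass_groups} together with the analysis of Theorem~\ref{theorem:mpBFDR_alpha} to conclude $\hat d_{in}=d_i^t$ for all $i\le m_1$ (hence every false null is rejected, which is exactly the key claim you isolate), then bound $\fnrx$ via \eqref{eq:shalizi1_muller} and pass to $\tfrac1n\log$. The only cosmetic differences are that the paper retains the denominator $\sum_i(1-\hat d_{in})$ and applies L'H\^opital to $\sum_i(1-d_i^t)e^{-nJ(H_{1i})}$, whereas you drop the denominator via $|A_n|\vee1\ge1$ and bound directly by $m\,e^{-n(\tilde H_{\min}-\epsilon)}$; both routes yield the same $\limsup$.
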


\begin{corollary}
	Assume condition \ref{ass_groups}. Then for asymptotic $\alpha$-control of $mpBFDR$ in the non-marginal procedure the following holds: 
	\begin{equation*}
	\lim_{n\rightarrow\infty} pBFNR =0.
	\end{equation*}
\end{corollary}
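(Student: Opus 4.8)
The plan is to obtain this as a bounded-convergence consequence of Theorem~\ref{theorem:mpBFNR_alpha}, in exactly the way Theorem~\ref{bfnr_lm} is obtained from the a.s.\ convergence of $\fnrx$ to zero. Theorem~\ref{theorem:mpBFNR_alpha} gives, under $\alpha$-control of $mpBFDR$, that $\tfrac1n\log\fnrx\le-\tilde H_{\min}$ for all large $n$, almost surely; since $\tilde H_{\min}=\min_{i:d_i^t=0}J(H_{1i})>0$, this forces $\fnrx\to0$ almost surely, in fact at an exponential rate. Combined with the trivial bound $0\le\fnrx\le1$ (it is a weighted average of the $v_{in}$'s), this is essentially all the analytic input required.

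First I would handle the conditioning event $A_n=\{\delta_{\mm}(\bone\mid\bX_n)=0\}$, over which $pBFNR$ is an average. On one hand, the a.s.\ convergence $\fnrx\to0$ already forces every false null to be rejected for all large $n$, almost surely: if some $j$ with $d_j^t=1$ had $d_j=0$ along a subsequence, then by \eqref{eq:shalizi2_muller} $v_{jn}\to1$ along that subsequence, so $\fnrx\ge v_{jn}/m$ would stay bounded away from zero, contradicting $\fnrx\to0$. On the other hand, under assumption~\ref{ass_groups} and the restriction $\alpha<\frac{m-m_1}{\sum_{i=1}^m d_i^t+m-m_1}$, the $\beta_n$-sequence constructed in the proof of Theorem~\ref{corollary:beta_n0} to achieve $\alpha$-control leaves at least one true null unrejected, so the selected configuration is never $\bone$ for large $n$. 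Hence $P(A_n)\to1$.

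Finally I would write $pBFNR=E_{\bX_n}[\fnrx\,\mathbf 1_{A_n}]/P(A_n)$; the numerator tends to $0$ by the bounded convergence theorem ($\fnrx\,\mathbf 1_{A_n}\le1$ for every $n$ and $\fnrx\to0$ a.s.), while the denominator tends to $1$, so $pBFNR\to0$. The step I expect to be the main obstacle is verifying that $P(A_n)$ stays bounded away from zero (ideally $\to1$): unlike the setting of Theorem~\ref{bfnr_lm}, here $\beta_n\to0$ and the procedure is purposely driven to reject extra true nulls, so one must appeal to the explicit construction of $\beta_n$ and the upper bound on admissible $\alpha$ from Theorem~\ref{corollary:beta_n0} to be sure that the all-rejections configuration $\bone$ is avoided; everything else is routine.
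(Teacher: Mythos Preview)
Your proposal is correct and follows the same route the paper implicitly takes: the corollary is stated without proof immediately after Theorem~\ref{theorem:mpBFNR_alpha}, and the intended argument is precisely the bounded-convergence step used in the proof of Theorem~\ref{bfnr_lm} (cf.\ the proof of Theorem~\ref{bfdr_lm}), with Theorem~\ref{theorem:mpBFNR_alpha} supplying the a.s.\ convergence $\fnrx\to 0$ in place of Theorem~\ref{corollary:limit_bfnrs}. Your identification of the conditioning event $A_n=\{\delta_{\mathcal{NM}}(\bone\mid\bX_n)=0\}$ as the only nontrivial point is accurate; note that the proof of Theorem~\ref{theorem:mpBFNR_alpha} already records that $\hat d_{in}=d_i^t$ for $i=1,\ldots,m_1$ under \ref{ass_groups}, which together with the fact that $\alpha$-control forces strictly less error than the $\beta=0$ maximum gives the needed $P(A_n)\to 1$---so your concern is handled by the same structural facts the paper uses, even though neither you nor the paper spells this step out in full detail.
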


Thus we see that $pBFNR$ also goes to 0 with increasing sample size when type-I error is asymptotically controlled at $\alpha$. In fact, the posterior type-II error, that is, 
$\fnrx$ converges to zero at a rate faster than or equal to that compared to the case when $\alpha$ control is not imposed. In other words, 
allowing asymptotically non-negligible type-I error may result in lower type-II error. 

\section[Illustration in time-series model] {Illustration of consistency of $\nmd$ in time-varying covariate selection in autoregressive process}
\label{sec:ar1}
Let the true model $P$ stand for the following $AR(1)$ model consisting of time-varying covariates: 
\begin{equation}
x_t=\rho_0 x_{t-1}+\sum_{i=0}^m\beta_{i0}z_{it}+\epsilon_t,~t=1,2,\ldots,
\label{eq:true_ar1}
\end{equation}
where $x_0\equiv 0$, $|\rho_0|<1$ and $\epsilon_t\stackrel{iid}{\sim}N(0,\sigma^2_0)$, for $t=1,2,\ldots$.
We further assume that for $i=1,\ldots,m$, the time-varying covariates $\left\{z_{it}:t=1,2,\ldots\right\}$ are realizations 
of some asymptotically stationary stochastic process.
We set $z_{0t}\equiv 1$ for all $t$.

Now let the data be modeled by the same model as $P$ but with $\rho_0$, $\beta_{i0}$ and $\sigma^2_0$ be replaced with the unknown quantities $\rho$, $\beta_i$
and $\sigma^2$, respectively, that is,
\begin{equation}
x_t=\rho x_{t-1}+\sum_{i=0}^m\beta_iz_{it}+\epsilon_t,~t=1,2,\ldots,
\label{eq:modeled_ar1}
\end{equation}
where we set $x_0\equiv 0$, $\epsilon_t\stackrel{iid}{\sim}N(0,\sigma^2)$, for $t=1,2,\ldots$.
As in $P$, we assume that for $i=1,\ldots,m$, the time-varying covariates are realizations of some asymptotically stationary stochastic process. 
For notational convenience, we define $\bz_t=(z_{0t},z_{1t},\ldots,z_{mt})^\prime$, $\bbeta_0=(\beta_{00},\beta_{10},\ldots,\beta_{m0})^\prime$ and $\bbeta=(\beta_0,\beta_1,\ldots,\beta_m)^\prime$. 

For our asymptotic theories regarding the multiple testing methods
that we consider in our main manuscript, we must verify the assumptions of Shalizi for the modeling setups (\ref{eq:true_ar1}) and (\ref{eq:modeled_ar1}), with
$\btheta=(\rho,\beta_0,\beta_1,\ldots,\beta_m,\sigma)$. 
As regards the parameter space, let $\rho\in\mathbb R$, where $\mathbb R$ is the real line,
$\bbeta\in\mathbb R^m$ 
and $\sigma\in\mathbb R^+$, where $\mathbb R^+$ is the positive part of the real line. 
Thus, $\bTheta=\mathbb R^{m+1}\times\mathbb R^+$, is the parameter space. 
We consider any prior on $\bTheta$ that is dominated by the Lebesgue measure, with mild condition on the moments.

With respect to the above setup, we consider the following multiple-testing framework:
\begin{align}
&H_{01}:|\rho|<1\text{ versus }H_{11}:|\rho|\geq 1\text{ and}\nonumber\\
&H_{0i}:\beta_i\in\mathcal N_0\text{ versus }H_{1i}:\beta_i\in\mathcal N^c_0,~\text{ for }~i=2,\ldots,m+1,
\label{eq:ar1_test}
\end{align}
where $\mathcal N_0$ is some neighbourhood of zero and $\mathcal N^c_0$ is the complement of the neighbourhood in the corresponding parameter space.

Verification of consistency of our non-marginal procedure amounts to verification of assumptions \ref{shalizi1}--\ref{shalizi7} for the above setup. 
In this regard, we make the following assumptions regarding the true model and prior distribution: 
\begin{enumerate}[label={(C\arabic*)}]	
	\item \label{ar1} 
	\begin{align}
	&\frac{1}{n}\sum_{t=1}^n\bz_t \rightarrow \bzero;\nonumber\\
	&\frac{1}{n}\sum_{t=1}^n\bz_{t+k}\bz^\prime_t \rightarrow \bzero~\mbox{(null matrix)},~\mbox{for any}~k\geq 1;\label{eq:ass2}\\
	&\frac{1}{n}\sum_{t=1}^n\bz_t\bz'_t\rightarrow \bSigma_z,\nonumber
	\end{align}
	as $n\rightarrow\infty$.
	
	\item \label{ar2} $\underset{t\geq 1}{\sup}~|\bz_t^\prime\bbeta_0|<C$, for some $C>0$.  
	
	\item \label{ar3} $\thetao$ is an interior point of $\bTheta$.
\end{enumerate}

With these model assumptions, we have to verify the seven assumptions in Section \ref{subsec:assumptions_shalizi} in order to show consistency. Theorem \ref{th:shalizi} essentially tells that under certain model and prior assumptions, the posterior distribution asymptotically concentrates around the true data generating process. In this problem, we need to show that the posterior distribution concentrates around $\thetao$.

An important concept related to the posterior convergence theory is the asymptotic equipartition property, which needs to hold for this model. This is ensured by conditions 
\ref{shalizi1}-\ref{s3}. \ref{s4} fortifies that the class of postulated models are not completely orthogonal to the true data generating process. The sequence of sets $\{\mathcal G_n\}_{n=1}^\infty$ in condition \ref{s5} is analogous to the method of sieves \ctp{geman1982} which ensures that the behaviour of the posterior distribution on the full parameter space is dominated by its behaviour on the sieves. \ref{s6}, together with \ref{s5}, make sure that the prior probability mass outside the sieve is exponentially small
with the decay rate large enough so that the posterior probability mass outside it also goes to zero. Using the analogy to the sieve again, the interpretation of the assumption is that the convergence of the log-likelihood ratio is sufficiently fast and eventually the convergence is uniform, almost surely. 

To show that Bayesian multiple testing methods are consistent for model \eqref{eq:true_ar1}, we need to verify the conditions in Section \ref{subsec:assumptions_shalizi}. These are shown in Section \ref{subsec:A2} which leads to the following theorem.
\begin{theorem}
	\label{theorem:ar1}
	Under model assumptions \ref{ar1} -- \ref{ar3}, the non-marginal multiple testing procedure for the hypothesis testing problem in \eqref{eq:ar1_test} is consistent. 
\end{theorem}

Needless to mention, all the results regarding the asymptotic convergence rate of different multiple testing error measures will also continue to hold for this setup.

As an aside, from the above results, we also get a method for variable selection problem from the multiple testing approach. We do not require any restriction on the choice of prior distribution except that it has to be a proper probability distribution. We have proved the results for dependent data making it quite general.

\section{Simulation study}
\label{simulation_asymptotic}
In this section we compare the performance of the non-marginal procedure $(\nmd)$ with the widely used Bayesian multiple testing methods of \ctn{muller04} ($\mathrm{MPR}$) and \ctn{SanatGhosh08} ($\mathrm{SZG}$). 
With increasing sample sizes, we study the convergence rates of these methods.
We elaborate the simulation design in the following section.

\subsection{True data generating mechanism}
\label{true_mech_asy}
In the simulation study we take $\rho_0=-0.5$, $\sigma_0^2=1$ and $m=150$. As regards the $m$-dimensional true regression vector $\bbeta_0$, we take 10 randomly chosen components to be non-zero and the rest to be zero. We generate the covariates as the following
\begin{equation}
\bz_1,\ldots,\bz_t\overset{iid}{\sim} \mn(\bzero,\bPhi),\label{eq:genz}
\end{equation}
where $\mn(\bzero,\bPhi)$ denotes a multivariate distribution with mean vector $\bzero$ and dispersion matrix $\bPhi$. In this study
$\bPhi$ is a known positive definite matrix. With these covariates and true set of parameters $\btheta_0=(\bbeta_0,\sigma_0,\rho_0)$, we generate the observation $x_1,\ldots,x_t$ following the model in \eqref{eq:true_ar1}. 

\subsection{The postulated Bayesian model}
\label{postmod_asy}
Since most of the true $\beta_{0i}$s are zero, we consider the following global local shrinkage prior similar to \ctn{ishwaran2005} over the $\beta_i$s:
\begin{align*}
\beta_i|\gamma_i&\overset{iid}{\sim} \gamma_i N(0,\tau_i^2)+(1-\gamma_i) N(0,v\tau_i^2),\\
\tau_i&\overset{iid}{\sim} IG(a_0,b_0),\\
v&\sim~C^{+}(0,1),\\
\gamma_i|p&\overset{iid}{\sim} Bernoulli(p),\\
p&\sim~Beta(a_1,b_1),
\end{align*}
where $C^{+}(0,1)$ is the Cauchy distribution restricted on the positive real line and $IG(\cdot,\cdot)$ denotes a \textit{Inverse-gamma} distribution. Similar prior has previously been considered in variable selection problem from a multiple testing perspective by \ctn{dghosh2006}. Here $\gamma_i$s are the allocation variables signifying whether the $i$-th variable is included in the model or not. It is a common practice to work with the allocation variables in Bayesian variable selection problems \ctp{narisetty2014} and therefore we reframe the hypothesis testing problem in \eqref{eq:ar1_test} as the following:
$$H_{0i}:\gamma_i=0\text{ versus }H_{1i}:\gamma_i=1,~\text{ for }~i=2,\ldots,m+1.$$

$\tau_i$s are positive numbers taking into account the uncertainty of $\beta_i$s being non-zero when $\gamma_i=1$. $v$ is a very small quantity allocating very high probability around 0 when $\gamma_i=0$. 
We have adjusted $a_1$ and $b_1$ such that the mode of the prior Beta distribution of $p$ is $0.1$. As regards $\sigma$ and $\rho$, we consider the following distributions as prior for these parameters:
\begin{align*}
\sigma^2&\sim IG (a_2,b_2),\\
\rho&\sim N (0,1),
\end{align*}
Here $a_2$ and $b_2$ are adjusted such that the mode of the the prior distribution is 1 and variance 100. For all the three methods, the same prior distribution is considered. 

For implementation of the $\nmd$ method in this simulation study groups are formed according to the strategy in Section \ref{subsec:choiceofgroups} where $\Lambda$ is taken to be the posterior correlation matrix of $\bbeta$ computed the MCMC samples. With these groups, we implement the non-marginal method. 

\subsection{Criteria for comparing different multiple testing methods in this study}
\label{subsec:comp_criteria}
Different multiple testing methods are expected to yield different decision configurations for the same given dataset. We adopt three different criteria for comparing 
the performances of the competing multiple testing procedures, which we briefly discuss below.

Let $\bd_{\mm}$ be the decision configuration obtained by a multiple testing method $\mm$. We compute the Jaccard similarity coefficient 
\ctp{Jaccard01,Jaccard08,Jaccard12} between the true decision configuration $\bd_0$ and $\bd_{\mm}$ for each of three multiple testing methods and compare their performances. 

Let $\bbeta_{\mm}$ and $\hat\rho$ be the mode of the posterior distributions of  $\bbeta$ and $\rho$, respectively, given the data. We also compute the Euclidean distance between $(\bbeta_0,\rho_0)$ and $(\bbeta_{\mm},\hat\rho)$. In this context, note once the multiple testing procedure identifies the significant covariates, we no longer consider
the shrinkage prior for $\beta_i$ for computing the posterior distributions of $\btheta$ and $\rho$, but set $\beta_i\overset{iid}{\sim} N(0,\tau^2)$.

With the significant covariates and a future covariate $\bz_{t+1}$, we compute the posterior predictive distribution of $x_{t+1}$ and compute the Kolmogorov-Smirnov (KS) distance from the true predictive distribution of $x_{t+1}$. Again, we consider $\beta_i\overset{iid}{\sim} N(0,\tau^2)$. 

In other words, we compare the performance and accuracy of the three competing Bayesian multiple testing methods by means of the Jaccard similarity coefficient, Euclidean distance and KS-distance. For five different sample sizes, we replicate our simulation experiments 750 times and compare the boxplots.

Notably, for all the three competing Bayesian multiple testing methods, $\fdrx$ is controlled at level 0.05.

\subsection{Comparison of the results}
\label{subsec:sim_comp}
\begin{figure}
	\centering
	\subfloat[][] {{\includegraphics[scale=.6]{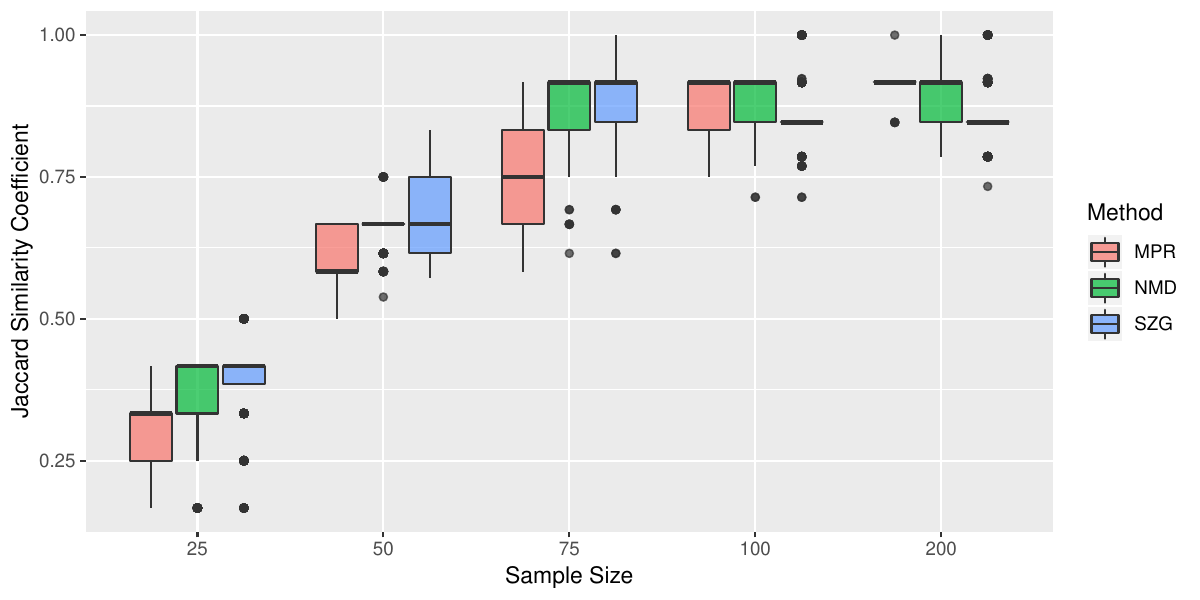} }\label{jacc}}\\
	\subfloat[][] {{\includegraphics[scale=.6]{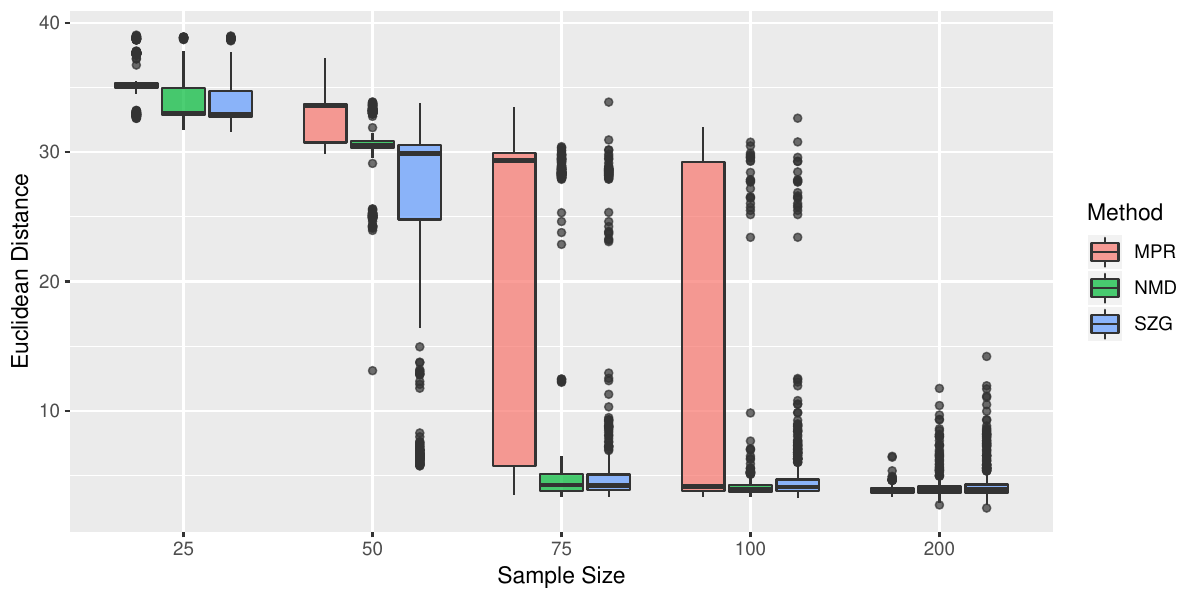} }\label{norm}}\\
	\subfloat[][] {{\includegraphics[scale=.6]{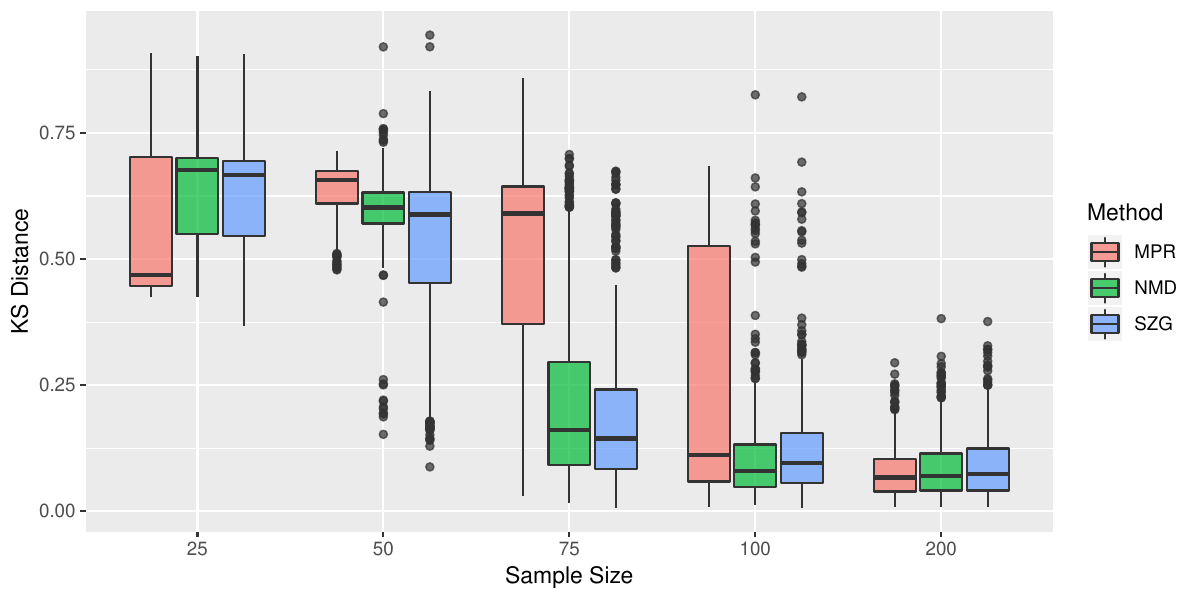} }\label{ks}}
	\caption{Performance comparison through boxplots. }
	\label{fig:compare_Bayesian_asymp}
\end{figure}

From Figure \ref{jacc}, we see that the Jaccard Similarity Coefficients have stabilized near 1 sample size 75 onward indicating that the asymptotic theory is indeed takes precedence for all the methods, when the sample size gets sufficiently large. Interestingly, the $\nmd$ method has the fastest convergence rate with respect to sample size in terms of accurately detecting the truly significant covariates and also exhibits the best performance when the sample sizes are small. Similar behaviour can be observed with respect to the Euclidean distance from the true parameter values (see Figure \ref{norm}). As regards the KS distances depicted in Figure \ref{ks}, we can see that the results of the $\nmd$ are the most stable for every sample size, and with moderately large sample size this method gives the best performance. In this study, greater accuracy of the $\nmd$ method, particularly for small sample size, indicates that in practical multiple hypothesis testing applications where the sample size is generally much smaller as compared to the number of parameters, incorporating the dependence structure in the multiple testing method indeed boosts accuracy.

Observe that variability is much higher in the Euclidean and KS distances compared to Jaccard similarity coefficients. 
Figure \ref{jacc} vindicates that as we are observing more and more samples the right regressors are getting selected with increasing precision. Nonetheless incorrect decision regarding some regressors, even with moderately high regression coefficient, would contribute significantly to the Euclidean and KS distances. This is reflected in Figures \ref{norm} and \ref{ks}.

\subsection{Empirical studies on model misspecfication}
\label{subsec:sim_misspecification}
In this section we study the effect of model misspecification on multiple testing methods. We generate data from the $AR(1)$ model in \eqref{eq:true_ar1} for varying values of $\rho_0$. To allow model misspecification we ignore the autoregressive part while fitting the data and perform variable selection according to the global local shrinkage prior in Section \ref{postmod_asy}. The true values of the parameters are same as we have considered in Section \ref{true_mech_asy} with sample size of $n=100$. The different values of $\rho_0$ are provided in the x-axis of the different panels in Figure \ref{fig:misspecification}. We compute the Jaccard similarity coefficient, Euclidean norm and KS distance in the same way as described in Section \ref{subsec:comp_criteria}.
\begin{figure}
	\centering
	\subfloat[][] {{\includegraphics[scale=.6]{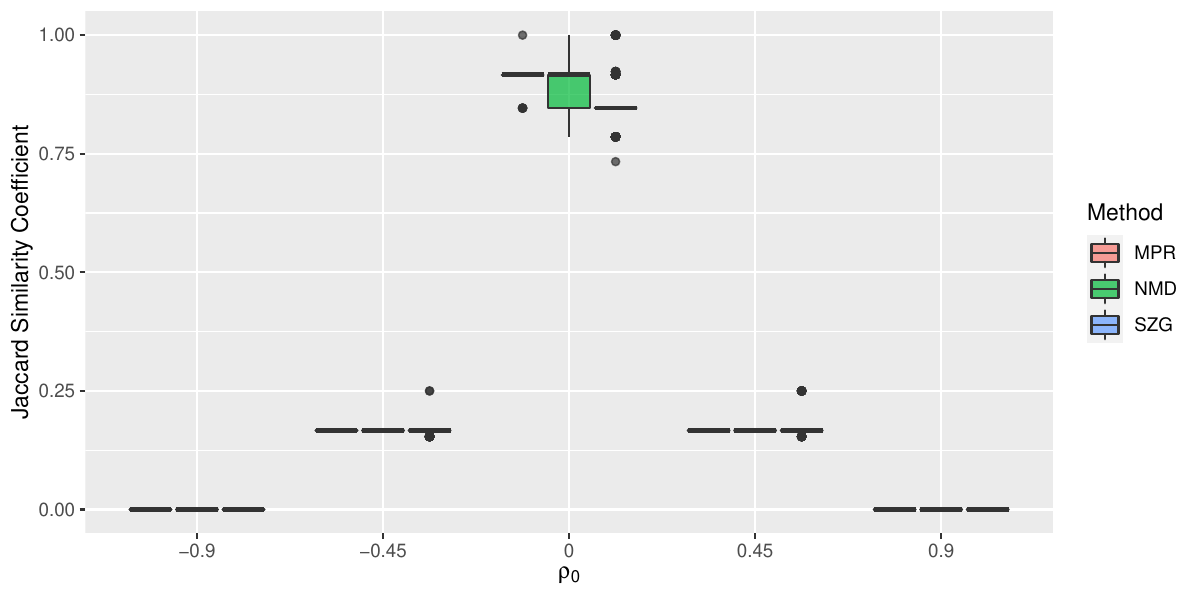} }\label{misspecification_jacc}}\\
	\subfloat[][] {{\includegraphics[scale=.6]{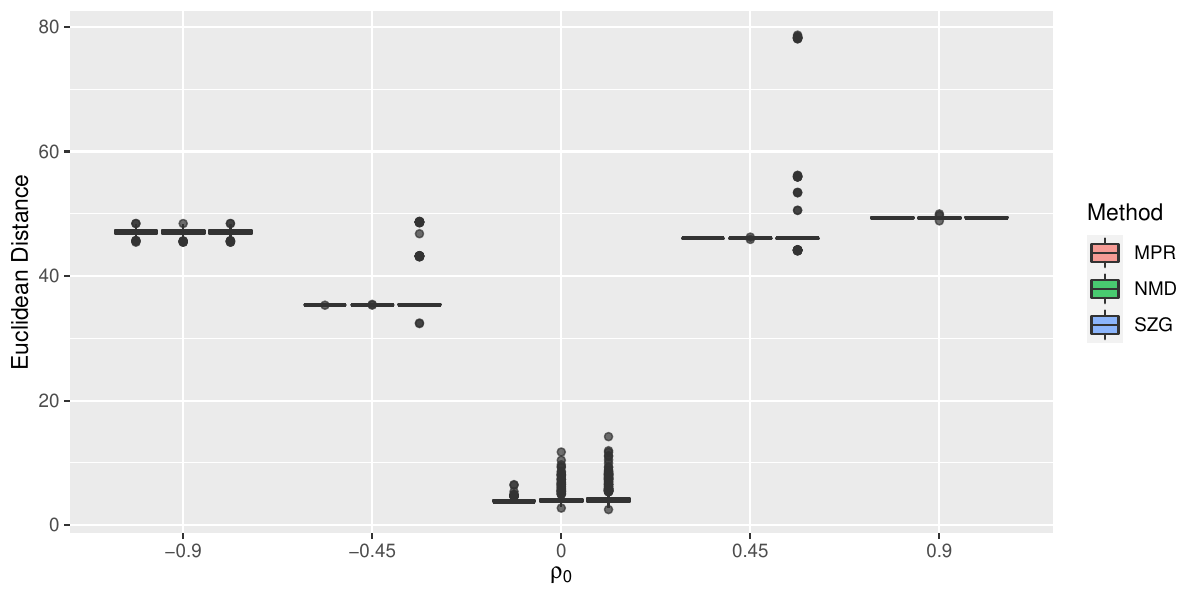} }\label{misspecification_norm}}\\
	\subfloat[][] {{\includegraphics[scale=.6]{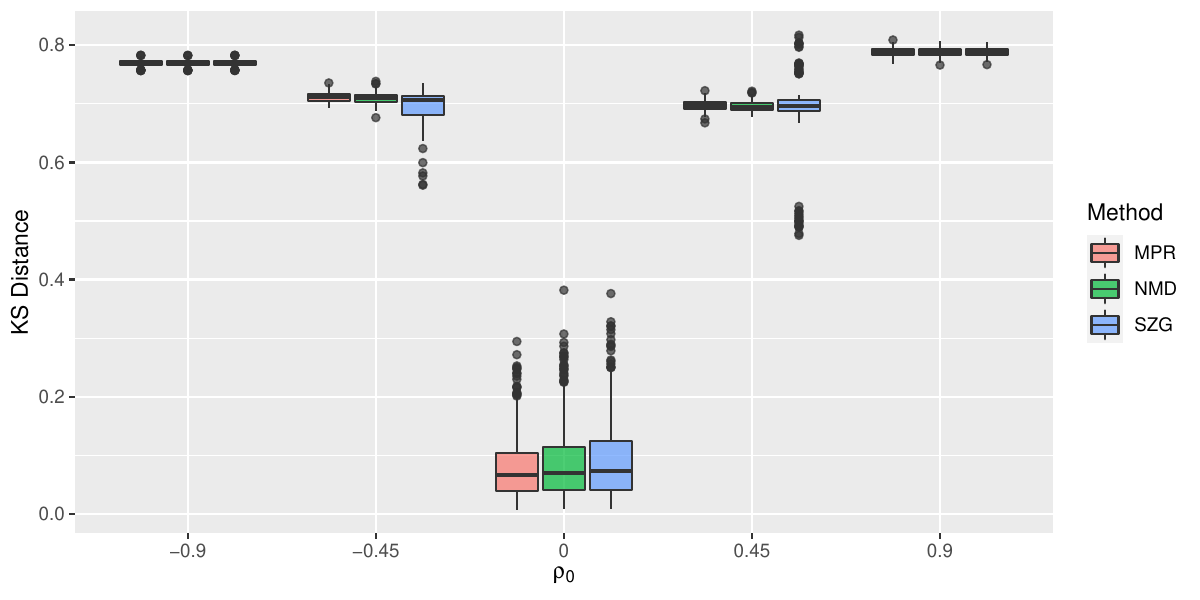} }\label{misspecification_ks}}
	\caption{Effect of model misspecification on multiple testing methods}
	\label{fig:misspecification}
\end{figure}

Note that for $\rho_0=0$ all the methods perform quite accurately. In this case there is no autoregressive component in the true data generating model. Also Figure \ref{fig:compare_Bayesian_asymp} shows that asymptotics is taking precedence from sample size 75 onward. As the performance of all the three competing methods depend upon 
appropriate posterior probabilities, accurate results are quite expected for $\rho_0=0$. Variability in the Euclidean norms and KS distances is much lesser here compared to Figures \ref{norm} and \ref{ks} for $n=100$. The added precision is not surprising as we do not have the autoregressive component to model here.

However, the performance of all the methods deteriorate with increase in model misspecification. The posterior probabilities of events may not properly showcase the uncertainty in case the class of postulated models have a high KL divergence from the true data generating process. As $\rho_0$ deviates from zero model misspecification increases (see Lemma \ref{th:KL_ar} from the supplement section) and the Bayesian multiple testing methods under consideration, being based on posterior probabilities, fail to perform
adequately. The issue is apparent from Figure \ref{fig:misspecification}. This study highlights that for misspecified models which are inadequate for explaining the variability in the data, it is indeed difficult to extract meaningful inference out of them.

\section{Real data analysis}
\label{sec:realdata}
We now consider variable selection using our Bayesian non-marginal multiple testing method in a real data context. 
The data, available at \url{https://www4.stat.ncsu.edu/~boos/var.select/maize.html},
obtained from \ctn{Buckler09}, is regarding 25 crosses (also called families or populations) of maize flowers, each with about 200 observations
on recombinant inbred lines (RILs). There
are 7389 independent variables (covariates) representing the SNP markers, and the response variable is 
``days to anthesis male flowering time" (dtoa). In all there are 4981 observations
for the 25 crosses (excluding the missing values). Our aim to apply the Bayesian non-marginal multiple testing procedure to select the influential marker variables
from the total of 7389, in a linear regression context, for each of the 25 crosses, each having about 200 observed values.

We consider the same Bayesian model as in Section \ref{postmod_asy} for this variable selection problem and subsequently employ our multiple testing procedure to select the relevant SNP markers. With the selected markers, we compute the corresponding fitted values for each of the different populations. Figure \ref{fig:real_data}, displaying the 
observed versus fitted dtoa values for each of the different populations, vindicate that the data variability is adequately explained by our model and methodologies. 
Due to space constraints we show the plots of 12 populations in the main article and the rest in Section \ref{sec:supp_realdata}. In the same latter section, we also report
the causal SNPs for some of the populations.

\begin{figure}[ht]
	\centering
	\includegraphics[width=1\linewidth]{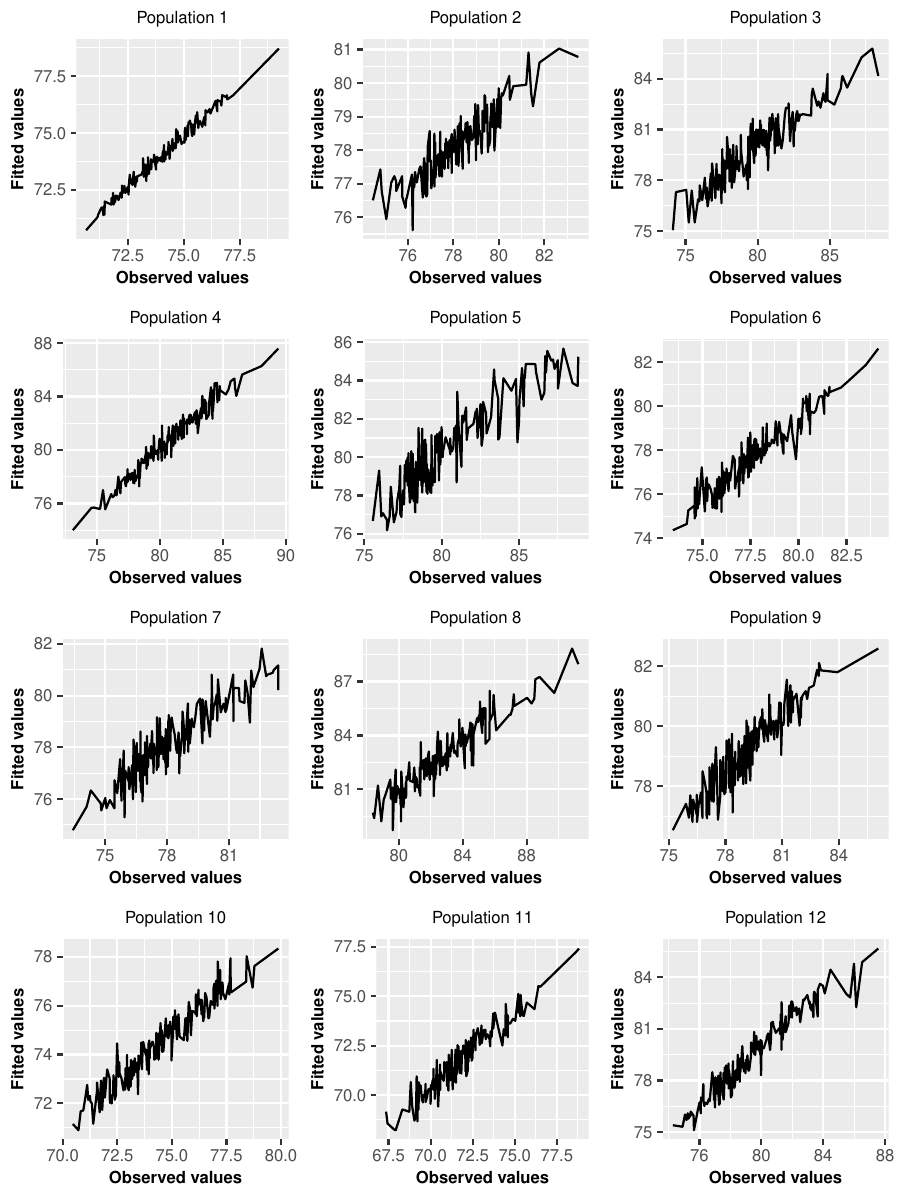}
	\caption{Fitted \textit{dtoa} versus observed \textit{dtoa} }
	\label{fig:real_data}
\end{figure}

\section{Summary and conclusion}
\label{sec:conclusion}
In this article we have investigated asymptotic properties of Bayesian multiple testing procedures. We have shown strong consistency of the non-marginal Bayesian procedure under general dependence structure. As a corollary we have shown that additive loss function based approaches are also consistent.

We have also studied asymptotic properties of multiple testing error rates. We have shown that the posterior versions of the error rates, namely, $\fdrx$ and $\fnrx$, are directly associated with the entropy rate of the true data generating model. Hence, from the Bayesian perspective, we advocate the posterior versions of error rates conditioned on the data. 
In the light of the dependence structure associated with the hypotheses, we introduce $\mfdr$- a modified version of $\fdrx$; the modification being with respect to the dependence among the parameters. The modified version is seen to be associated with a smaller entropy compared to its existing counterpart. 

For $\alpha$-control of type-I errors in the non-marginal procedure, a mild, but still an extra assumption of existence of disjoint groups of hypothesis where the nulls are true, is required. 
However, as we elucidated, this condition indeed indicates that grouping dependent hypotheses pools information across them and provides an extra safeguard against committing error.
Importantly, as we have shown, for large sample sizes, $\alpha$ can not take
any value in $(0,1)$; in particular, we have provided lower bounds to the maximum possible values of $mpBFDR$ and $pBFDR$ and have shown that these lower bounds
are significantly bounded away from $1$, so that setting large values of $\alpha$ is not possible for large samples. Hence for large samples,
the practitioner must choose $\alpha$ carefully.
As regards type-II error, we have shown that, with $\alpha$-control
of type-I error rates, $pBFNR$ is likely to converge to zero at a faster rate than that without $\alpha$-control of the type-I errors. 
Thus the usual expectation of statisticians, that controlling type-I error yields smaller type-II error in single hypothesis testing, 
is expected to hold in our multiple testing framework.

We draw attention to the fact that most of our asymptotic results crucially hinge on the assumptions considered in Section \ref{subsec:assumptions_shalizi}. In this regard,
we have illustrated these assumptions in a variable selection problem with autoregressive response variables from a multiple testing perspective, along with the test for stationarity. In this problem we show that the assumptions hold for any choice of proper prior over the general, non-compact parameter space, entailing strong consistency of Bayesian multiple testing methods. We have also discussed how verification of these assumptions are implicitly related to showing consistency of the maximum likelihood estimator. Indeed, proving strong consistency of Bayesian posterior distributions or maximum likelihood estimators is certainly quite challenging for non-compact parameter spaces and dependent setups, and our approach is probably of independent interest in this respect.

We have backed up our theoretical investigations with extensive simulation studies, comparing the performance of our $\nmd$ method with two other Bayesian multiple
testing procedures for sample sizes ranging from small to moderately large. The results indicate clear superiority of the $\nmd$ method, particularly for small sample sizes.
This is quite encouraging, since in practice, sample sizes are expected to be small compared to the number of available covariates. The message underlying the superior
performance of $\nmd$ is that it exploits the dependence structure in a more wholesome way compared to the existing methods. 

The empirical studies on misspecified models are particularly important. These studies show that multiple testing methods relying on inadequate models would suffer. The results by \citet{Shalizi09} show that asymptotically the model with the minimum KL divergence from the true data generating process would be preferred, however, that preferred model can be quite bad. As the $\nmd$ method reckons on the uncertainty delivered by appropriate posterior probabilities, it suffers in such cases.

Application of our multiple testing procedure to a real maize data concerning selection of influential marker variables from a total of 7389 variables, 
yielded quite encouraging results. Since variable selection from among many variables is an important real problem, our results seem to indicate the importance
of our multiple testing procedure.

In this article we have assumed $m$, the number of hypotheses, to be fixed. But it is also important to investigate the asymptotic theory as $m$ also tends to infinity with 
the sample size $n$, particularly because of its relevance in practical problems. Indeed, we have already made progress regarding this; see \citet{Chandra20}. 
Note that the framework by \citet{Shalizi09} is valid for infinite dimensional models and it has not been too difficult to extend our results in the high-dimensional 
setup with additional mild assumptions. Our high-dimensional asymptotic results on error rates are somewhat less precise than in this current fixed
dimensional setup, in that closed form rates of convergence are not exactly available. We have also extended our current asymptotic results on the $AR(1)$ regression
to high-dimensional asymptotic frameworks.

\section*{Acknowledgement}
We sincerely express our gratitude to the Editor, the Associate Editor and
the referees for their responsible handling of our paper and providing valuable comments that led to significant improvement of the presentation and readability of our paper.


\begin{center}
	{\huge\bf Supplementary Material}
\end{center}

\renewcommand\thefigure{S-\arabic{figure}}
\renewcommand\thetable{S-\arabic{table}}
\renewcommand\thesection{S-\arabic{section}}
\renewcommand{\theequation}{S-\arabic{equation}}

\setcounter{section}{0}
\setcounter{theorem}{0}
\setcounter{figure}{0}
\setcounter{table}{0}

\section[Model assumptions]{Assumptions of \ctn{Shalizi09}}
\label{subsec:assumptions_shalizi}
\begin{enumerate}[label={(S\arabic*)}]	
	\item  \label{shalizi1} Consider the following likelihood ratio:
	\begin{equation}
	R_n(\btheta)=\frac{f_{\btheta}(\bX_n)}{p(\bX_n)}.
	\label{eq:R_n}
	\end{equation}
	Assume that $R_n(\btheta)$ is $\sigma(\bX_n)\times \mathcal T$-measurable for all $n>0$.
	
	\item \label{s2} For each $\btheta\in\Theta$, the generalized or relative asymptotic equipartition property holds, and so,
	almost surely,
	\begin{equation*}
	\underset{n\rightarrow\infty}{\lim}~\frac{1}{n}\log R_n(\btheta)=-h(\btheta),
	\end{equation*}
	where $h(\btheta)$ is given in (S3) below.
	
	\item \label{s3} For every $\btheta\in\Theta$, the KL-divergence rate
	\begin{equation}
	h(\btheta)=\underset{n\rightarrow\infty}{\lim}~\frac{1}{n}E\left(\log\frac{p(\bX_n)}{f_{\btheta}(\bX_n)}\right).
	\label{eq:S3}
	\end{equation}
	exists (possibly being infinite) and is $\mathcal T$-measurable.
	
	\item \label{s4}
	Let $I=\left\{\btheta:h(\btheta)=\infty\right\}$. 
	The prior $\pi$ satisfies $\pi(I)<1$. 
	
	\item \label{s5} There exists a sequence of sets $\mathcal G_n\rightarrow\Theta$ as $n\rightarrow\infty$ 
	such that: 
	\begin{enumerate}
		\item[(1)]
		\begin{equation}
		\pi\left(\mathcal G_n\right)\geq 1-\alpha\exp\left(-\varsigma n\right),~\mbox{for some}~\alpha>0,~\varsigma>2h(\Theta);
		\label{eq:S5_1}
		\end{equation}
		\item[(2)]The convergence in (S3) is uniform in $\theta$ over $\mathcal G_n\setminus I$.
		\item[(3)] $h\left(\mathcal G_n\right)\rightarrow h\left(\Theta\right)$, as $n\rightarrow\infty$.
	\end{enumerate}
	
	For each measurable $A\subseteq\Theta$, for every $\delta>0$, there exists a random natural number $\tau(A,\delta)$
	such that
	\begin{equation}
	n^{-1}\log\int_{A}R_n(\btheta)\pi(\btheta)d\btheta
	\leq \delta+\underset{n\rightarrow\infty}{\limsup}~n^{-1}
	\log\int_{A}R_n(\btheta)\pi(\btheta)d\btheta,
	\label{eq:limsup_2}
	\end{equation}
	for all $n>\tau(A,\delta)$, provided 
	$\underset{n\rightarrow\infty}{\lim\sup}~n^{-1}\log\pi\left(\mathbb I_A R_n\right)<\infty$.
	Regarding this, the following assumption has been made by Shalizi:
	
	\item\label{s6} The sets $\mathcal G_n$ of (S5) can be chosen such that for every $\delta>0$, the inequality
	$n>\tau(\mathcal G_n,\delta)$ holds almost surely for all sufficiently large $n$.
	
	\item \label{shalizi7} The sets $\mathcal G_n$ of (S5) and (S6) can be chosen such that for any set $A$ with $\pi(A)>0$, 
	\begin{equation}
	h\left(\mathcal G_n\cap A\right)\rightarrow h\left(A\right)\text{ as } n\rightarrow\infty 
	\label{eq:S7}
	\end{equation}	
\end{enumerate}

\section{Comparisons of versions of $FNR$}
\label{sec:mpBFNR}
With respect to the new notions of errors in (\ref{eq:tp}) and (\ref{eq:e}), $\fnrx$ can be modified as
\begin{align*}
modified~\fnrx &= \pexp\left[\sum_{\bd\in\mathbb D} \frac{\sum_{i=1}^m(1-d_i)r_iz_i}{\sum_{i=1}^m(1-d_i)\vee1}
\delta_{\mathcal M}\left(\bd|\bX_n\right)
\right]\notag\\
&= \sum_{\bd\in\mathbb D} 
\frac{\sum_{i=1}^{m}(1-d_i)w_{in}(\bd)}{\sum_{i=1}^{m}(1-d_i)\vee1}
\delta_{\mathcal M}(\bd|\bX_n).
\end{align*}
We denote $modified ~\fnrx$ as $\mfnr$. Now, from Theorem \ref{th:shalizi}, $d^t_i=0$ implies
\begin{equation*}
\exp\left[-n\left(J\left(\Theta_{i\bd^t}\right)+\epsilon\right)\right]<w_{in}(\bd^t)
<\exp\left[-n\left(J\left(\Theta_{i\bd^t}\right)-\epsilon\right)\right].
\end{equation*}

Similar to Theorem \ref{corollary:limit_bfnrs}, using the above bounds, we can obtain the asymptotic convergence rate of $\mfnr$, formalized in the following theorem:
\begin{theorem}
	Assume conditions \ref{A1} and \ref{A2}. Let $\tilde J_{\min}=\underset{i:d_i^t=0}{\min} J(\bTheta_{i\bd^t})$. Then for the non-marginal multiple testing procedure
	\begin{equation}
	\lim_{n\rightarrow\infty}\frac{1}{n}\log \mfnr =-\tilde J_{\min}.\label{eq:J_min}
	\end{equation}
\end{theorem}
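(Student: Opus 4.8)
The plan is to mirror the proof of Theorem~\ref{corollary:limit_bfnrs} for $\fnrx$, replacing the marginal posterior probabilities $v_{in}$ by the joint probabilities $w_{in}(\bd^t)$ and the marginal divergences $J(H_{1i})$ by the group-wise divergences $J(\bTheta_{i\bd^t})$. The bounds supplied just before the statement together with the consistency already established do almost all the work.

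\textbf{Collapsing the outer sum.} First I would invoke Theorem~\ref{th:nmd_consistent}, which needs only \ref{A1}: almost surely there is a random $N$ with $\dnm(\bd^t|\bX_n)=1$ and $\dnm(\bd|\bX_n)=0$ for every $\bd\neq\bd^t$ once $n\geq N$. Substituting this into
\[
\mfnr=\sum_{\bd\in\mathbb D}\frac{\sum_{i=1}^m(1-d_i)w_{in}(\bd)}{\sum_{i=1}^m(1-d_i)\vee 1}\,\dnm(\bd|\bX_n)
\]
leaves only the $\bd=\bd^t$ term, so that for $n\geq N$, almost surely,
\[
\mfnr=\frac{\sum_{i:d_i^t=0}w_{in}(\bd^t)}{\sum_{i=1}^m(1-d_i^t)},
\]
where \ref{A2} guarantees $\sum_{i=1}^m(1-d_i^t)\geq 1$, so the ``$\vee 1$'' is inactive and the numerator is a non-empty sum over $\{i:d_i^t=0\}$.

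\textbf{Applying Shalizi's bound and passing to the rate.} Next, for each $i$ with $d_i^t=0$ I would use the inequality recorded immediately before the theorem (a consequence of Theorem~\ref{th:shalizi}): for every $\epsilon>0$ there is $n_0(\epsilon)$ --- the same for all such $i$ because $m$ is finite, as remarked after \eqref{eq:shalizi2_muller} --- with $e^{-n(J(\bTheta_{i\bd^t})+\epsilon)}<w_{in}(\bd^t)<e^{-n(J(\bTheta_{i\bd^t})-\epsilon)}$ for $n\geq n_0(\epsilon)$. Summing over $\{i:d_i^t=0\}$ and dividing by the fixed denominator yields, for $n\geq\max\{N,n_0(\epsilon)\}$,
\[
e^{-n\epsilon}\,\frac{\sum_{i:d_i^t=0}e^{-nJ(\bTheta_{i\bd^t})}}{\sum_{i=1}^m(1-d_i^t)}\leq\mfnr\leq e^{n\epsilon}\,\frac{\sum_{i:d_i^t=0}e^{-nJ(\bTheta_{i\bd^t})}}{\sum_{i=1}^m(1-d_i^t)},
\]
which is precisely Lemma~\ref{lemma:fnr_bound} specialised to $\mfnr$. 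Taking $n^{-1}\log$ of both sides and using that $\{i:d_i^t=0\}$ is finite --- so $n^{-1}\log\bigl(\sum_{i:d_i^t=0}e^{-nJ(\bTheta_{i\bd^t})}\bigr)\to-\min_{i:d_i^t=0}J(\bTheta_{i\bd^t})=-\tilde J_{\min}$, while $n^{-1}\log$ of the fixed denominator tends to $0$ --- gives $-\tilde J_{\min}-\epsilon\leq\liminf_n n^{-1}\log\mfnr\leq\limsup_n n^{-1}\log\mfnr\leq-\tilde J_{\min}+\epsilon$. Letting $\epsilon\downarrow 0$ yields \eqref{eq:J_min}. As in the marginal case one also notes $\varPsi_{i\bd^t}\subseteq\varUpsilon_{1i}$ when $d_i^t=0$, whence $J(\bTheta_{i\bd^t})\geq J(H_{1i})>0$ and $\tilde J_{\min}>0$, confirming $\mfnr\to 0$.

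\textbf{Where the difficulty lies.} The argument is essentially routine once Theorem~\ref{th:shalizi} and consistency of the non-marginal rule are in hand. The only point demanding a little care is the uniform choice of $n_0(\epsilon)$ over the finitely many indices $i$ with $d_i^t=0$, which is legitimate precisely because $m$ is finite. If anything deserves to be called the obstacle, it is the bookkeeping that identifies $w_{in}(\bd^t)$ (with $d_i^t=0$) as the posterior mass of the set $\varPsi_{i\bd^t}$ whose essential infimum of $J(\cdot)$ defines $J(\bTheta_{i\bd^t})$; once the definitions of $w_{in}$, $\bTheta_{i\bd^t}$ and $\varPsi_{i\bd^t}$ are unwound, the exponent is forced and the proof closes.
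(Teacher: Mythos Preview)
Your proof is correct and follows essentially the same route as the paper: invoke consistency (Theorem~\ref{th:nmd_consistent}) to collapse $\mfnr$ to the single $\bd^t$ term, apply the Shalizi-type sandwich bounds on $w_{in}(\bd^t)$ to obtain the inequality displayed in the paper's proof, and then extract the rate via the elementary $n^{-1}\log\sum e^{-na_i}\to-\min_i a_i$ argument (the paper phrases this last step via L'H\^{o}pital in the analogous Theorem~\ref{theorem:preferability}, but that is the same computation). Your additional remark that $\tilde J_{\min}\geq\tilde H_{\min}>0$ is a welcome clarification not made explicit in the paper's own proof.
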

\begin{proof}
	Following the proof of Lemma \ref{theorem:mpBFDR_form1}, we have 
	\begin{equation*}
	\exp(-n\epsilon)\times\frac{\sum_{i=1}^m(1-d^t_i)e^{-nJ\left(\Theta_{i\bd^t}\right)}}{\sum_{i=1}^m(1-d^t_i)}
	\leq \mfnr
	\leq\exp(n\epsilon)\times \frac{\sum_{i=1}^m(1-d^t_i)e^{-nJ\left(\Theta_{i\bd^t}\right)}}
	{\sum_{i=1}^m(1-d^t_i)},
	\end{equation*}
	from which the proof follows.
\end{proof}

If $J\left(\Theta_{i\bd^t}\right)=J\left(H_{1i}\right)$ for $i=1,\ldots,m$,
it would follow that $w_{in}(\bd^t)$ and $v_{in}$ have the same lower and upper bounds.
Lemma \ref{lemma:J_equal} shows that indeed $J\left(\Theta_{i\bd^t}\right)=J\left(H_{1i}\right)$ for $i=1,\ldots,m$,
under a very mild assumption given by the following.
\begin{enumerate}[label={(A\arabic*)}]	
	\setcounter{enumi}{2}
	\item \label{ass_KL} For any decision configuration $\bd$, define $S(\bd)=\{i:d_i=d_i^t\}$. Then for two decision configurations $\bd$ and $\tilde \bd$, if $S(\bd)\subset S(\tilde{\bd})$, then $J(\bTheta(\bd))>J(\bTheta(\tilde\bd))$. 
\end{enumerate}
Notably in \ref{ass_KL}, $S(\bd)$ is the set of correct decisions. Note that $S(\bd)\subset S(\tilde{\bd})$ implies that number of correct decisions is more in $\tilde{\bd}$ compared to $\bd$. Hence, the model directed by $\bd$ should procure greater divergence. This assumption is easily seen to hold in independent cases, and also in dependent models such as multivariate normal. 

\begin{lemma}
	\label{lemma:J_equal}
	Under \ref{ass_KL}, $J\left(\Theta_{i\bd^t}\right)=J\left(H_{1i}\right)$, for all $i$ such that $d_i^t=0$.
\end{lemma}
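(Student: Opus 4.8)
The idea is to reduce the claimed identity to a purely combinatorial statement about decision configurations. Since $h(\bTheta)$ is a (finite) constant, $J(A)=h(A)-h(\bTheta)$ for every measurable $A$, and the essential infimum of $h$ over a finite union of sets equals the minimum of the essential infima over the individual sets; hence $J(A_1\cup\cdots\cup A_k)=\min_{1\le j\le k}J(A_j)$. First I would record that, because $\Theta_j=\Theta_{0j}\cup\Theta_{1j}$ for each $j$, the two parameter sets in the statement decompose as finite unions over the (finitely many) decision configurations: writing $\bTheta(\bd)=\prod_{j=1}^m\Theta_{d_jj}\times\prod_{j=m+1}^M\Theta_j$, one has $\varUpsilon_{1i}=\bigcup_{\bd:\,d_i=1}\bTheta(\bd)$ and $\varPsi_{i\bd^t}=\bigcup_{\bd\in\mathcal C_i}\bTheta(\bd)$, where $\mathcal C_i=\{\bd\in\mathbb D:\ d_i=1,\ d_j=d_j^t\ \forall\, j\in G_i\setminus\{i\}\}$ (the unconstrained coordinates outside $G_i$ in $\varPsi_{i\bd^t}$ are precisely what generates the union). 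Consequently $J(H_{1i})=\min_{\bd:\,d_i=1}J(\bTheta(\bd))$ and $J(\bTheta_{i\bd^t})=\min_{\bd\in\mathcal C_i}J(\bTheta(\bd))$, and since $\mathcal C_i\subseteq\{\bd:\,d_i=1\}$ it suffices to show that the minimum over the larger family $\{\bd:\,d_i=1\}$ is attained at a configuration lying in $\mathcal C_i$.

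Next I would use that $S:\bd\mapsto S(\bd)=\{j:d_j=d_j^t\}$ is a bijection from $\mathbb D$ onto the collection of subsets of $\{1,\dots,m\}$ — two configurations with the same set of correct coordinates must coincide, the decisions being binary — and that, under this identification, \ref{ass_KL} reads: $S(\bd)\subsetneq S(\tilde\bd)$ implies $J(\bTheta(\bd))>J(\bTheta(\tilde\bd))$. Let $\bd^{\ast}$ (depending on $i$) be the configuration with $d_i^{\ast}=1$ and $d_j^{\ast}=d_j^t$ for every $j\ne i$; then $\bd^{\ast}\in\mathcal C_i$ and $S(\bd^{\ast})=\{1,\dots,m\}\setminus\{i\}$. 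Since $d_i^t=0$, any $\bd$ with $d_i=1$ has $i\notin S(\bd)$, so $S(\bd)\subseteq\{1,\dots,m\}\setminus\{i\}=S(\bd^{\ast})$; if $\bd\ne\bd^{\ast}$ this inclusion is strict, whence $J(\bTheta(\bd))>J(\bTheta(\bd^{\ast}))$ by \ref{ass_KL}. Thus $\bd^{\ast}$ is the unique minimizer of $J(\bTheta(\cdot))$ over $\{\bd:\,d_i=1\}$, and, being an element of $\mathcal C_i$, it is a fortiori the minimizer over $\mathcal C_i$ as well. Combining the two steps gives $J(H_{1i})=J(\bTheta(\bd^{\ast}))=J(\bTheta_{i\bd^t})$, which is exactly the assertion, for every $i$ with $d_i^t=0$.

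I expect the substantive content to lie entirely in the second step — translating \ref{ass_KL} correctly and checking that the set inclusion $S(\bd)\subset S(\bd^{\ast})$ invoked is genuinely strict, so that the strict inequality in \ref{ass_KL} applies and pins the minimum down inside $\mathcal C_i$. The first step is routine: monotonicity of the essential infimum under set inclusion together with its behaviour under finite unions, plus the elementary observation that the union decompositions of $\varUpsilon_{1i}$ and $\varPsi_{i\bd^t}$ hold because $\Theta_j=\Theta_{0j}\cup\Theta_{1j}$ and because coordinates outside $G_i$ are left unrestricted in $\varPsi_{i\bd^t}$.
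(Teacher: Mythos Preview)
Your argument is correct and is, in fact, a genuinely different and more elementary route than the paper's own proof. You work directly at the level of the parameter sets: decompose $\varUpsilon_{1i}$ and $\varPsi_{i\bd^t}$ as finite unions of the cells $\bTheta(\bd)$, use that $J$ of a finite union is the minimum of the $J$'s of the parts (essential infimum over a union), and then apply \ref{ass_KL} to identify the unique minimizing cell $\bd^{\ast}$, which lies in $\mathcal C_i$. Nothing beyond the definition of $J$ and assumption \ref{ass_KL} is invoked.

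The paper instead argues through posterior probabilities: it writes $\postp(H_{1i})$ as a sum over $\bd\in S_i$ of the probabilities $\postp\!\left(H_{1i}\cap\{\cap_{j\neq i}H_{d_j,j}\}\right)$, divides by $\postp(\Theta_{i\bd^t})$, and then uses Theorem~\ref{th:shalizi} together with \ref{ass_KL} to show that every term except the one indexed by $\bd^{(i)}$ tends to $0$, so the ratio tends to $1$; finally it concludes by contradiction that the exponential rates $J(\Theta_{i\bd^t})$ and $J(H_{1i})$ must coincide. This is more indirect --- it routes the combinatorial content through the asymptotics of Shalizi's theorem --- whereas your proof extracts the same combinatorial fact (the minimizer over $\{\bd:d_i=1\}$ already lies in $\mathcal C_i$) without any appeal to posterior convergence. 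Your version has the advantage of making transparent that the identity is a purely static statement about the KL landscape and \ref{ass_KL}, independent of the data or the posterior; the paper's version has the advantage of staying within the analytic framework used throughout Section~\ref{sec:mpBFNR}.
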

\begin{proof}
	For  all $i$ such that $d_i^t=0$, define $\bd^{(i)}$, where $d^{(i)}_j=d^t_j$ for all $j\neq i$, and $d^{(i)}_j=1$ and $S_i=\{\bd:d_i=1\}$. Then
	\begin{equation}
	\postp\left(H_{1i}\right)
	=\sum_{\bd\in S_i} \postp\left(H_{1i}\cap\left\{\cap_{j\neq i}H_{d_j,j}\right\}\right),
	\label{eq:J_1}
	\end{equation}
	so that dividing both sides of (\ref{eq:J_1}) by $\postp\left(\Theta_{i\bd^t}\right)$ yields
	\begin{equation}
	\frac{\postp\left(H_{1i}\right)} {\postp\left(\Theta_{i\bd^t}\right)}
	=1+\sum_{\bd\in S_i\backslash \{\bd^{(i)}\}} \frac{\postp\left(H_{1i}\cap\left\{\cap_{j\neq i} H_{d_j,j} \right\} \right)} {\postp\left(\Theta_{i\bd^t}\right)} 
	\label{eq:J_2}
	\end{equation}
	Theorem \ref{th:shalizi} and \ref{ass_KL} together ensures that as $n\rightarrow\infty$, 
	$\frac{\postp\left(H_{1i}\cap\left\{\cap_{j\neq i} H_{d_j,j} \right\} \right)} {\postp \left(\Theta_{i\bd^t} \right)}
	\rightarrow 0$ exponentially fast, for all $\bd\in S_i\backslash\{\bd^{(i)}\}$. Applying this to the right hand side of (\ref{eq:J_2}) yields
	\begin{equation}
	\frac{\postp\left(H_{1i}\right)} {\postp\left(\Theta_{i\bd^t} \right)}\rightarrow 1,
	\label{eq:J_3}
	\end{equation}
	exponentially fast. Now, applying Shalizi's result to $\postp\left(H_{1i}\right)$ and $\postp\left(\Theta_{i\bd^t} \right)$
	it follows that if $J\left(\Theta_{i\bd^t}\right)\neq J\left(H_{1i}\right)$, then (\ref{eq:J_3}) is contradicted.
	Hence, $J\left(\Theta_{i\bd^t}\right)=J\left(H_{1i}\right)$, for $i=1,\ldots,m$.
\end{proof}

From Lemma \ref{lemma:J_equal}, we see that $\tilde J_{\min}=\tilde H_{\min}$. Thus, we get the following result:
\begin{theorem}
	\label{theorem:asymp_BFNR}
	Assume \ref{A1}--\ref{ass_KL}. Then, for the non-marginal multiple testing procedure,
	\begin{equation}
	\lim_{n\rightarrow\infty}\frac{1}{n}\log\left(\frac{\mfnr}{\fnrx}\right) = 0,
	\label{eq:ass4_BFNR}
	\end{equation}
	and
	\begin{equation}
	\lim_{n\rightarrow\infty}\frac{\log\left(\mfnr\right)}{\log\left(\fnrx\right)} = 1.
	\label{eq:ass4_BFNR2}
	\end{equation}
\end{theorem}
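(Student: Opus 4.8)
The plan is to combine the convergence-rate result for $\mfnr$ (equation \eqref{eq:J_min}) with the convergence-rate result for $\fnrx$ (equation \eqref{eq:H_min} in Theorem \ref{corollary:limit_bfnrs}), and then invoke Lemma \ref{lemma:J_equal} to identify the two exponents. Specifically, Theorem \ref{corollary:limit_bfnrs} gives
$\lim_{n\rightarrow\infty}\frac{1}{n}\log \fnrx =-\tilde H_{\min}$ with $\tilde H_{\min}=\min_{i:d_i^t=0} J(H_{1i})$, and the theorem just proved above gives $\lim_{n\rightarrow\infty}\frac{1}{n}\log \mfnr =-\tilde J_{\min}$ with $\tilde J_{\min}=\min_{i:d_i^t=0} J(\bTheta_{i\bd^t})$. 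By Lemma \ref{lemma:J_equal}, under \ref{ass_KL} we have $J(\bTheta_{i\bd^t})=J(H_{1i})$ for every $i$ with $d_i^t=0$, so taking the minimum over exactly this index set yields $\tilde J_{\min}=\tilde H_{\min}$.

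With that identification in hand, the first display \eqref{eq:ass4_BFNR} follows immediately: write
$\frac{1}{n}\log\left(\frac{\mfnr}{\fnrx}\right)=\frac{1}{n}\log \mfnr-\frac{1}{n}\log \fnrx$,
and pass to the limit, obtaining $-\tilde J_{\min}-(-\tilde H_{\min})=0$ almost surely. Note that both $\tilde J_{\min}$ and $\tilde H_{\min}$ are strictly positive (as remarked after \eqref{eq:shalizi2_muller}, $J(H_{1i})>0$ whenever $d_i^t=0$, and \ref{A2} guarantees there is at least one such $i$ so the minima are taken over a nonempty set), which is what lets us divide and take logarithms without ambiguity.

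For the second display \eqref{eq:ass4_BFNR2}, I would write $\log\left(\mfnr\right)=n\cdot\frac{1}{n}\log\mfnr$ and similarly for $\fnrx$, so that the ratio equals $\big(\frac{1}{n}\log\mfnr\big)\big/\big(\frac{1}{n}\log\fnrx\big)$. Both numerator and denominator converge to the same finite nonzero limit $-\tilde J_{\min}=-\tilde H_{\min}\in(-\infty,0)$, so the ratio converges to $1$ almost surely. The only mild care needed is that the denominator $\frac{1}{n}\log\fnrx$ is eventually bounded away from $0$ (which holds since it converges to the strictly negative number $-\tilde H_{\min}$), so the quotient is well defined for all large $n$ and the limit of the quotient is the quotient of the limits. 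The genuine mathematical content has already been discharged in the rate theorems and in Lemma \ref{lemma:J_equal}; the remaining obstacle, if any, is purely bookkeeping — ensuring the index sets over which $\tilde J_{\min}$ and $\tilde H_{\min}$ are defined coincide (they do, both being $\{i:d_i^t=0\}$) so that Lemma \ref{lemma:J_equal} applies term by term and hence to the minima.
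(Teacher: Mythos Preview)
Your proposal is correct and follows essentially the same approach as the paper: split $\frac{1}{n}\log(\mfnr/\fnrx)$ as a difference, use the rate results \eqref{eq:J_min} and \eqref{eq:H_min} together with Lemma \ref{lemma:J_equal} to conclude $\tilde J_{\min}=\tilde H_{\min}$, and then handle \eqref{eq:ass4_BFNR2} as a quotient of two sequences with the same nonzero limit. Your extra remarks on nonemptiness of $\{i:d_i^t=0\}$ and on the denominator being eventually bounded away from zero are fine added precision but do not change the argument.
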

\begin{proof}
	Note that,
	\begin{equation*}
	\frac{1}{n}\log\left(\frac{\mfnr}{\fnrx}\right) = \frac{1}{n}\log\left(\mfnr\right)-\frac{1}{n}\log\left(\fnrx\right).
	\end{equation*}
	Now $\frac{1}{n}\log\left(\mfnr\right)\rightarrow-\tilde{J}_{\min}$ and $\frac{1}{n}\log\left(\fnrx\right)\rightarrow-\tilde{H}_{\min}$ as $n\rightarrow\infty$. Again by Lemma \ref{lemma:J_equal}, $\tilde{J}_{\min}=\tilde{H}_{\min}$. This proves (\ref{eq:ass4_BFNR}). The proof of (\ref{eq:ass4_BFNR2}) follows from (\ref{eq:J_min}) and (\ref{eq:H_min}), using 
	$\tilde{J}_{\min}=\tilde{H}_{\min}$. 
\end{proof}

Theorem \ref{theorem:asymp_BFNR} remains true for any 
$\bG=\{G_1,\ldots,G_m\}$. In other words, given that \ref{ass_KL} holds, (\ref{eq:ass4_BFNR}) shows that none of $\mfnr$ or $\fnrx$ is asymptotically preferable over the other, while
(\ref{eq:ass4_BFNR2}) shows that $\log\left(\mfnr\right)$ and $\log\left(\fnrx\right)$ are asymptotically equivalent, irrespective of how the $G_i$'s are formed. 


\section{Proofs of results in Section \ref{sec:compare_BFDR}}
\begin{proof}[Proof of Theorem \ref{th:nmd_consistent}]
Let $\bTheta^{tc}$ be the complement set of $\bTheta(\bd^t)$. Then by virtue of Theorem \ref{th:shalizi} we have
\begin{equation*}
\lim_{n\rightarrow\infty} \frac{1}{n}\log \postp \left( \bTheta^{tc}\right) = -J\left( \bTheta^{tc}\right).
\end{equation*}
This implies that for any $\epsilon>0$, there exists a $n_0(\epsilon)$ such that for all $n>n_0(\epsilon)$
\begin{align*}
&\exp\left[-n\left(J\left(\bTheta^{tc}\right)+\epsilon\right)\right]
< \postp \left( \bTheta^{tc}\right) < \exp \left [-n\left(J\left(\bTheta^{tc}\right)-\epsilon\right) \right]\\	
\Rightarrow& 1-\exp \left [-n\left(J\left(\bTheta^{tc}\right)-\epsilon\right) \right] < \postp \left( \bTheta^t\right) <1- \exp\left[-n\left(J\left(\bTheta^{tc}\right)+\epsilon\right)\right].
\end{align*}
For notational convenience, we shall henceforth denote $J\left(\bTheta^{tc}\right)$ by $J$.

Observe that if $\bd\in\mathbb D^c_{i}$, at least one decision is wrong corresponding to some hypothesis in $G_i$. As $\postp \left( \bTheta^{tc} \right)$ is the posterior 
probability of at least one wrong decision in the 
parameter space, we have
\begin{equation}
w_{in}(\bd)<\postp \left( \bTheta^{tc}\right) < \exp \left [-n\left(J-\epsilon\right) \right].
\label{eq:ineq_start}
\end{equation}
Similarly for $\bd\in\mathbb D_{i}$ and for false $H_{0i}$
\begin{equation}
w_{in}(\bd)>\postp \left( \bTheta^t\right) >1- \exp \left [-n\left(J-\epsilon\right) \right].
\label{eq:ineq2}
\end{equation}

From conditions 
(\ref{eq:liminf_beta}) and (\ref{eq:limsup_beta}), it follows that there exists $n_1$ such that for all $n>n_1$
\begin{align*}
\beta_n&>\underline{\beta}-\delta,\\
\beta_n&<1-\delta,\text{ such that}
\end{align*}
$\underline{\beta}-\delta>0$ and $1-\bar{\beta}>\delta$, for some $\delta>0$.
It follows using this, (\ref{eq:ineq_start}) and (\ref{eq:ineq2}), that
\begin{align*}
\sum_{i:\bd\in \mathbb{D}_i^c }^m d^t_iw_{in}(\bd^t)- \sum_{i:\bd\in \mathbb{D}_i^c}^md_iw_{in}(\bd) &> \left(1- e^{-n(J-\epsilon )}\right) \sum_{i:\bd\in \mathbb{D}_i^c } d^t_i-e^{-n(J-\epsilon )}\sum_{i:\bd\in \mathbb{D}_i^c } d_i,~\mbox{and}\\
\beta_n \left( \sum_{i:\bd\in \mathbb{D}_i^c}^m d^t_i-\sum_{i:\bd\in \mathbb{D}_i^c }^m d_i\right)&< (1-\delta )\sum_{i:\bd\in \mathbb{D}_i^c}^m d^t_i - (\underline\beta-\delta) \sum_{i:\bd\in \mathbb{D}_i^c}^m d_i.
\end{align*}
Now $n_1$ can be appropriately chosen such that $e^{-n(J-\epsilon )}<\min \{ \delta, \underline\beta-\delta\}$. Note that neither $n_0$ nor $n_1$ depends on $m$. 
Hence, for any value of $m$ and for all $n>\max \{ n_0, n_1\}$,
\begin{align*}
&\sum_{i:\bd\in \mathbb{D}_i^c }^m d^t_iw_{in}(\bd^t)- \sum_{i:\bd\in \mathbb{D}_i^c}^md_iw_{in}(\bd)
> \beta_n \left( \sum_{i:\bd\in \mathbb{D}_i^c}^m d^t_i-\sum_{i:\bd\in \mathbb{D}_i^c }^m d_i\right) ,~\mbox{for all}~\bd\neq\bd^t,\text{ almost surely}\\
\Rightarrow &  \lim_{n\rightarrow\infty}\delta_{\mathcal {NM}}(\bd^t|\bX_n)=1,~\mbox{almost surely}. 
\end{align*}

\end{proof}

\section{Additional results to Section \ref{subsec:asymp_mpBFDR} and proofs}
\begin{lemma}
	\label{theorem:mpBFDR_form1}
	Assume conditions \ref{A1} and \ref{A2}. Then for the non-marginal multiple testing procedure and any $\epsilon>0$, there exists $n_0(\epsilon)\geq 1$ such that for 
	$n\geq n_0(\epsilon)$, the following holds almost surely: 
	\begin{align*}
	\exp\left(-n\epsilon\right)\times\frac{\sum_{i=1}^md_i^t e^ {-nJ(\bTheta_{i\bd^t}^c) } }{\sum_{i=1}^md_i^t}
	\leq \mfdr
	\leq \exp\left(n\epsilon\right)\times\frac{\sum_{i=1}^md_i^t e^ {-nJ(\bTheta_{i\bd^t}^c) } }{\sum_{i=1}^md_i^t}.
	\end{align*}
\end{lemma}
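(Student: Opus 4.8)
The plan is to start from the closed-form expression of $\mfdr$ for a non-randomized rule, namely
$$
\mfdr = \sum_{\bd\in\mathbb D}\frac{\sum_{i=1}^m d_i\bigl(1-w_{in}(\bd)\bigr)}{\sum_{i=1}^m d_i\vee 1}\,\delta_{\mathcal M}(\bd|\bX_n),
$$
and exploit Theorem \ref{th:nmd_consistent}, which guarantees that for almost every sample path there is $n_1$ such that for $n\geq n_1$ the non-marginal rule selects $\bd^t$, i.e. $\dnm(\bd^t|\bX_n)=1$ and $\dnm(\bd|\bX_n)=0$ for $\bd\neq\bd^t$. Consequently, for $n\geq n_1$ the sum collapses to the single term
$$
\mfdr = \frac{\sum_{i=1}^m d_i^t\bigl(1-w_{in}(\bd^t)\bigr)}{\sum_{i=1}^m d_i^t\vee 1},
$$
and by condition \ref{A2} the denominator equals $\sum_{i=1}^m d_i^t\geq 1$, so the ``$\vee 1$'' can be dropped. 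This is the structural reduction; everything after is about controlling $1-w_{in}(\bd^t)$.

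Next I would invoke the bounds in \eqref{eq:shalizi2}: since $\bd^t\in\mathbb D_i$ for every $i$ (the true configuration is correct on every group), for any $\epsilon'>0$ there is $n_0(\epsilon')$ such that for $n\geq n_0(\epsilon')$ and every $i=1,\dots,m$,
$$
\exp\bigl[-n(J(\bTheta_{i\bd^t}^c)+\epsilon')\bigr] < 1-w_{in}(\bd^t) < \exp\bigl[-n(J(\bTheta_{i\bd^t}^c)-\epsilon')\bigr].
$$
Substituting the upper bound termwise into the numerator gives
$$
\sum_{i=1}^m d_i^t\bigl(1-w_{in}(\bd^t)\bigr) < \sum_{i=1}^m d_i^t e^{-nJ(\bTheta_{i\bd^t}^c)}e^{n\epsilon'} = e^{n\epsilon'}\sum_{i=1}^m d_i^t e^{-nJ(\bTheta_{i\bd^t}^c)},
$$
and likewise the lower bound gives $\sum_i d_i^t(1-w_{in}(\bd^t)) > e^{-n\epsilon'}\sum_i d_i^t e^{-nJ(\bTheta_{i\bd^t}^c)}$. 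Dividing by $\sum_i d_i^t$ yields exactly the claimed two-sided bound with $\epsilon'$ in place of $\epsilon$; taking $\epsilon'=\epsilon$ and $n_0(\epsilon)=\max\{n_1,n_0(\epsilon)\}$ (relabelling) finishes it. I should note explicitly that, as remarked after \eqref{eq:shalizi2_muller}, the same $n_0(\epsilon)$ works uniformly over the finitely many $i$, which is what makes the termwise substitution legitimate in a single step.

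The main thing to be careful about — rather than a deep obstacle — is the interplay of the two ``for large $n$'' statements: the convergence of the decision rule to $\bd^t$ (Theorem \ref{th:nmd_consistent}, which holds on a set of probability one) and the Shalizi-type envelope \eqref{eq:shalizi2} (also a.s.). I would intersect the two probability-one events and take $n_0(\epsilon)$ to be the maximum of the two thresholds, so that the final statement holds almost surely for all $n\geq n_0(\epsilon)$. One should also remark that condition \ref{A1} enters only indirectly, through Theorem \ref{th:nmd_consistent}'s hypothesis, while \ref{A2} is used directly to keep the denominator $\sum_i d_i^t$ strictly positive (and to ensure the $FDR$ ratio is genuinely of the stated form rather than trivially zero). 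No genuinely hard estimation is needed here; the content is entirely in correctly assembling the consistency result and the posterior-probability envelopes.
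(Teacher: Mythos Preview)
Your proposal is correct and follows essentially the same approach as the paper's own proof: collapse $\mfdr$ to the single $\bd^t$ term via Theorem \ref{th:nmd_consistent}, use \ref{A2} to ensure the denominator is positive, and then sandwich each $1-w_{in}(\bd^t)$ using the Shalizi-type bounds \eqref{eq:shalizi2}. Your write-up is in fact slightly more explicit than the paper's about why $\bd^t\in\mathbb D_i$ for every $i$, about the uniformity of $n_0(\epsilon)$ over the finitely many $i$, and about intersecting the two almost-sure events.
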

\begin{proof}
	Observe that,
	\begin{align}	
	&\mfdr\nonumber\\
	=&\sum_{\bd\neq\bzero}\frac{\sum_{i=1}^md_i (1- w_{in}(\bd))}{\sum_{i=1}^md_i\vee1}
	\dnm\left(\bd|\bX_n\right)\nonumber\\
	=&\frac{\sum_{i=1}^md_i^t (1- w_{in}(\bd^t))}{\sum_{i=1}^md_i^t}
	\dnm \left(\bd^t|\bX_n\right)+\sum_{\bd\neq\bd^t}\frac{\sum_{i=1}^md_i (1- w_{in}(\bd))}{\sum_{i=1}^md_i\vee1}
	\dnm\left(\bd|\bX_n\right). \label{eq:mpbfdrnum}
	\end{align}
	From the proof of Theorem \ref{th:nmd_consistent}, we see that under \ref{A1}, 
	$\dnm(\bd|\bX_n)=0$ for all $\bd\neq\bd^t$. Also under \ref{A2}, $\bd^t\neq\bzero$.
	For any $\epsilon>0$ and $n\geq n_0(\epsilon)$, it follows from (\ref{eq:shalizi1}) and (\ref{eq:shalizi2}) that a lower bound for (\ref{eq:mpbfdrnum})
	is
	\begin{align*}
	L_n&= \frac{\sum_{i=1}^md_i^t e^ {-n(J(\bTheta_{i\bd^t}^c)+\epsilon) } }{\sum_{i=1}^md_i^t} \dnm(\bd^t|\bX_n) =\exp\left(-n\epsilon\right)\times\frac{\sum_{i=1}^md_i^t e^ {-nJ(\bTheta_{i\bd^t}^c) } }{\sum_{i=1}^md_i^t}.
	\end{align*}
	Similarly, an upper bound is given by
	\begin{align*}
	U_n&= \frac{\sum_{i=1}^md_i^t e^ {-n(J(\bTheta_{i\bd^t}^c)-\epsilon ) } }{\sum_{i=1}^md_i^t} \dnm\left(\bd^t|\bX_n\right)= \exp\left(n\epsilon\right)\times\frac{\sum_{i=1}^md_i^t e^ {-nJ(\bTheta_{i\bd^t}^c) } }{\sum_{i=1}^md_i^t}.
	\end{align*}
\end{proof}

Similar asymptotic bounds can also be obtained for $\fdrx$ under the same conditions. We state it formally in the following corollary.
\begin{corollary}
	\label{theorem:pBFDR_form}
	Assume conditions \ref{A1} and \ref{A2}. Then for the non-marginal multiple testing procedure and any $\epsilon>0$ and large enough $n$
	the following holds almost surely: 
	\begin{align*}
	\exp(-n\epsilon)\times \frac{\sum_{i=1}^md^t_i e^{-nJ(H_{0i})}} {\sum_{i=1}^md^t_i}
	\leq \fdrx
	\leq \exp(n\epsilon)\times \frac{\sum_{i=1}^md^t_ie^{-nJ(H_{0i})}} {\sum_{i=1}^md^t_i}.
	\end{align*}
\end{corollary}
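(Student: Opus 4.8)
The plan is to derive this directly from Lemma \ref{theorem:mpBFDR_form1} by replacing the non-marginal error object with its marginal counterpart. Recall that $\fdrx$ is the special case of $\mfdr$ obtained by setting all the groups to singletons, $G_i=\{i\}$, which forces $z_i\equiv 1$; in that case $w_{in}(\bd)$ collapses to $v_{in}=\postp(\Theta_{1i})$, and the bracketed ``group-correctness'' event in the definition of $\bTheta_{i\bd^t}^c$ reduces to the event $\{\theta_i\in\Theta_{d_i^t i}\}$ being violated, i.e. $J(\bTheta_{i\bd^t}^c)$ reduces to $J(H_{0i})$ when $d_i^t=1$ (and to $J(H_{1i})$ when $d_i^t=0$, but that term never enters since it is multiplied by $d_i^t$). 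Thus structurally the corollary is the ``$G_i=\{i\}$'' instance of the lemma. The only thing that genuinely needs checking is that the hypotheses of the lemma — conditions \ref{A1} and \ref{A2}, plus consistency of the decision rule so that eventually $\delta_{\mathcal M}(\bd^t|\bX_n)=1$ — still hold in this marginal setting; but Theorem \ref{th:nmd_consistent} and its Corollary already give consistency of the additive-loss (marginal) rule under \ref{A1}, and \ref{A2} guarantees $\sum_i d_i^t\geq 1$ so the denominator is positive.

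Concretely I would proceed as follows. First, by the consistency corollary of Theorem \ref{th:nmd_consistent}, there is $n_1$ such that for $n\geq n_1$ the marginal decision rule selects $\bd^t$ almost surely, so $\delta_{\mathcal M}(\bd|\bX_n)=\mathbb 1\{\bd=\bd^t\}$ and hence
\[
\fdrx=\frac{\sum_{i=1}^m d_i^t\bigl(1-v_{in}\bigr)}{\sum_{i=1}^m d_i^t\vee 1}.
\]
Second, invoke the bounds \eqref{eq:shalizi2_muller}: for any $\epsilon>0$ there is $n_0(\epsilon)\geq n_1$ such that for every $i$ with $d_i^t=1$ and all $n\geq n_0(\epsilon)$,
\[
e^{-n(J(H_{0i})+\epsilon)}<1-v_{in}<e^{-n(J(H_{0i})-\epsilon)}.
\]
Third, substitute these into the displayed expression for $\fdrx$ and factor $e^{\mp n\epsilon}$ out of the numerator sum, exactly as in the proof of Lemma \ref{theorem:mpBFDR_form1}; since \ref{A2} ensures $\sum_i d_i^t\vee 1=\sum_i d_i^t$, this yields the claimed two-sided bound.

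I do not expect a real obstacle here — the argument is a transcription of the lemma's proof with $w_{in}$ replaced by $v_{in}$ and \eqref{eq:shalizi2} replaced by \eqref{eq:shalizi2_muller}. The one point to be careful about is bookkeeping over the index set: the sums run over all $i$, but only the terms with $d_i^t=1$ survive the $d_i^t$ factor, so the exponential bounds only need to be applied to those indices, and the uniformity of $n_0(\epsilon)$ across the finitely many $i$ (noted in the text after \eqref{eq:shalizi2_muller}) is what lets us use a single $n_0(\epsilon)$. If one wanted to state it more carefully one could also remark that the corollary is literally Lemma \ref{theorem:mpBFDR_form1} applied to the group structure $G_i=\{i\}$ for all $i$, for which $\mfdr=\fdrx$ and $J(\bTheta_{i\bd^t}^c)=J(H_{0i})$; this gives a one-line proof modulo the identification of those two quantities.
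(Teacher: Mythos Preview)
Your concrete three-step argument is correct and is exactly the route the paper intends: use consistency so that eventually $\delta_{\mathcal M}(\bd^t|\bX_n)=1$, reduce $\fdrx$ to $\sum_i d_i^t(1-v_{in})/\sum_i d_i^t$, and then plug in the bounds \eqref{eq:shalizi2_muller}, mirroring the proof of Lemma~\ref{theorem:mpBFDR_form1} with $w_{in}$ replaced by $v_{in}$.

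One small terminological slip to fix: the corollary is stated for the \emph{non-marginal} procedure with general groups, not for the additive-loss (singleton-group) rule. So in your Step~1 you should invoke Theorem~\ref{th:nmd_consistent} itself, which gives consistency of the non-marginal rule, rather than its corollary about the additive-loss rule. Relatedly, your ``one-line'' reading---apply Lemma~\ref{theorem:mpBFDR_form1} literally with $G_i=\{i\}$---would, strictly speaking, bound $\fdrx$ for the singleton-group procedure, not for the general non-marginal procedure named in the corollary. The cleaner phrasing is that you are re-running the \emph{argument} of Lemma~\ref{theorem:mpBFDR_form1} (consistency $+$ Shalizi bounds) with the marginal error measure, not applying the lemma as a black box to a different group structure. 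Since both procedures are consistent and eventually pick $\bd^t$, the distinction is harmless for the conclusion, but it is worth stating precisely.
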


\begin{lemma}
	\label{lemma:fnr_bound}
	Assume conditions \ref{A1} and \ref{A2}. Then for the non-marginal multiple testing procedure and any $\epsilon>0$, there exists a natural number $n_1(\epsilon)$ such that for all $n>n_1(\epsilon)$ the following hold almost surely:
	\begin{equation*}
	\exp(-n\epsilon)\times\frac{\sum_{i=1}^m(1-d^t_i)e^{-nJ\left(\Theta_{i\bd^t}\right)}}{\sum_{i=1}^m(1-d^t_i)}
	\leq \mfnr
	\leq\exp(n\epsilon)\times \frac{\sum_{i=1}^m(1-d^t_i)e^{-nJ\left(\Theta_{i\bd^t}\right)}}
	{\sum_{i=1}^m(1-d^t_i)};
	\end{equation*}
\end{lemma}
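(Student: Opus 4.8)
## Proof proposal for Lemma~\ref{lemma:fnr_bound}

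The plan is to work directly from the closed form of $\mfnr$ stated in Section~\ref{subsec:Bayesian_errors}, namely
\[
\mfnr=\sum_{\bd\in\mathbb D}\frac{\sum_{i=1}^m(1-d_i)w_{in}(\bd)}{\sum_{i=1}^m(1-d_i)\vee 1}\,\delta_{\mathcal M}(\bd|\bX_n),
\]
and to exploit the fact that the non-marginal rule $\delta_{\mathcal M}=\dnm$ is a (data-dependent) indicator. First I would invoke Theorem~\ref{th:nmd_consistent}: under \ref{A1} the non-marginal procedure is asymptotically consistent, so almost surely there is a random threshold $N_0$ beyond which $\dnm(\bd^t|\bX_n)=1$ and $\dnm(\bd|\bX_n)=0$ for every $\bd\neq\bd^t$. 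Hence for $n\geq N_0$ the sum over $\mathbb D$ collapses to the single term $\bd=\bd^t$, giving
\[
\mfnr=\frac{\sum_{i=1}^m(1-d^t_i)\,w_{in}(\bd^t)}{\sum_{i=1}^m(1-d^t_i)\vee 1},
\]
and by \ref{A2} we have $\bd^t\neq\bone$, so the denominator equals $\sum_{i=1}^m(1-d^t_i)$, a fixed positive integer. This reduces the problem to controlling $w_{in}(\bd^t)$ for the indices $i$ with $d^t_i=0$.

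Next I would bring in the Shalizi-type bounds already recorded in \eqref{eq:shalizi1}. For an index $i$ with $d^t_i=0$, the "correct" decision on hypothesis $i$ is $d_i=0$, so the configuration appearing inside $w_{in}(\bd^t)$ — which forces $d_i=1$ in slot $i$ — has an \emph{incorrect} decision in $\bd^t_{G_i}$; that is, $\bd^t\in\mathbb D^c_i$. Therefore \eqref{eq:shalizi1} applies and yields, for any $\epsilon'>0$ and all $n$ past some $n_0(\epsilon')$,
\[
\exp\!\left[-n\bigl(J(\bTheta_{i\bd^t})+\epsilon'\bigr)\right]<w_{in}(\bd^t)<\exp\!\left[-n\bigl(J(\bTheta_{i\bd^t})-\epsilon'\bigr)\right].
\]
Here $\bTheta_{i\bd^t}$ (equivalently $\Theta_{i\bd^t}$ in the statement's notation) is the joint parameter space for $G_i$ with $\theta_i$ forced into $\Theta_{1i}$ and the remaining coordinates of $G_i$ directed by $\bd^t$; note $J(\Theta_{i\bd^t})>0$ precisely because $\bd^t\in\mathbb D^c_i$. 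Because $m$ is finite, a single $n_0(\epsilon')$ works simultaneously for all the relevant $i$, as emphasized in the remark following \eqref{eq:shalizi2_muller}.

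Finally I would assemble the estimate. Summing the two-sided bound on $w_{in}(\bd^t)$ over $\{i:d^t_i=0\}$ and dividing by the constant $\sum_{i=1}^m(1-d^t_i)$ gives
\[
\frac{\sum_{i=1}^m(1-d^t_i)e^{-n(J(\Theta_{i\bd^t})+\epsilon')}}{\sum_{i=1}^m(1-d^t_i)}<\mfnr<\frac{\sum_{i=1}^m(1-d^t_i)e^{-n(J(\Theta_{i\bd^t})-\epsilon')}}{\sum_{i=1}^m(1-d^t_i)},
\]
and since $e^{-n(J(\Theta_{i\bd^t})\pm\epsilon')}=e^{\mp n\epsilon'}e^{-nJ(\Theta_{i\bd^t})}$ one can pull the uniform factors $e^{\mp n\epsilon'}$ outside the sums, landing exactly on the claimed inequalities with $\epsilon=\epsilon'$. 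Taking $n_1(\epsilon)=\max\{N_0,n_0(\epsilon)\}$ (a finite random quantity, which is all that is needed for an almost-sure statement) completes the argument. The one genuine subtlety — and the step I would be most careful about — is the bookkeeping that identifies the configuration scored by $w_{in}(\bd^t)$ as lying in $\mathbb D^c_i$ for exactly the indices with $d^t_i=0$, so that \eqref{eq:shalizi1} rather than \eqref{eq:shalizi2} is the applicable bound; everything else is routine once consistency has collapsed the sum over $\mathbb D$.
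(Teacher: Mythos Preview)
Your approach is correct and mirrors the paper's own proof (which simply says the argument is analogous to that of Lemma~\ref{theorem:mpBFDR_form1}): collapse the sum over $\mathbb D$ via consistency, then bound each surviving $w_{in}(\bd^t)$ for $d^t_i=0$ using Shalizi's theorem. One small correction on the bookkeeping you flagged: strictly speaking $\bd^t\in\mathbb D_i$ for every $i$ (all its decisions are correct), so \eqref{eq:shalizi1} does not apply to $\bd=\bd^t$ \emph{as stated}; the bound instead follows directly from Theorem~\ref{th:shalizi} because $\bTheta_{i\bd^t}$ forces $\theta_i\in\Theta_{1i}$, which is misspecified when $d^t_i=0$, so $J(\bTheta_{i\bd^t})>0$ --- equivalently, since $w_{in}(\bd)$ does not depend on $d_i$, you may replace $\bd^t$ by the configuration with slot $i$ flipped to $1$, which \emph{is} in $\mathbb D^c_i$.
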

\begin{proof}
	Note that by Theorem \ref{th:shalizi}, $d^t_i=0$ implies
	\begin{equation*}
	\exp\left[-n\left(J\left(H_{1i}\right)+\epsilon\right)\right]<v_{in}
	<\exp\left[-n\left(J\left(H_{1i}\right)-\epsilon\right)\right].
	\end{equation*}
	From the above bound, similar to the proof of Lemma \ref{theorem:mpBFDR_form1}, we obtain asymptotic bounds of $\fnrx$.
\end{proof}

Note that, \ref{A2} is required for both Lemma \ref{theorem:mpBFDR_form1} and \ref{lemma:fnr_bound} to hold. Without the condition the denominators of the bounds would become zero. 
For proper bounds of the errors and hence for the limits, \ref{A2} is necessary.\\[2mm]
\begin{proof}[Proof of Theorem \ref{theorem:preferability}]
	From Lemma \ref{theorem:mpBFDR_form1} 
	we obtain the following for $n\geq n_0(\epsilon)$,  
	\begin{align*}
	&\exp\left(-n\epsilon\right)\times \frac{\sum_{i=1}^md^t_i e^ {-nJ(\bTheta_{i\bd^t}^c) } } {\sum_{i=1}^md^t_i }
	\leq \mfdr 
	\leq \exp\left(n\epsilon\right) \times \frac{\sum_{i=1}^md^t_i e^ {-nJ(\bTheta_{i\bd^t}^c) } } {\sum_{i=1}^md^t_i }\\
	&\Longleftrightarrow -\epsilon+\frac{1}{n} \log \left( \sum_{i=1}^md^t_i e^ {-nJ(\bTheta_{i\bd^t}^c) } \right) -\frac{1}{n} \log \left( \sum_{i=1}^md^t_i \right) \leq \frac{1}{n} \log \mfdr \\
	&\qquad\qquad\qquad\qquad\qquad\qquad\qquad\qquad \leq \epsilon + \frac{1}{n} \log \left( \sum_{i=1}^md^t_i e^ {-nJ(\bTheta_{i\bd^t}^c) } \right)-\frac{1}{n} \log \left( \sum_{i=1}^md^t_i \right).
	\end{align*}
	Applying \textit{L'H\^{o}pital's rule} we observe that
	\begin{equation*}
	\underset{n\rightarrow\infty} {\lim} \frac{1}{n} \log \left( \sum_{i=1}^md^t_i e^ {-nJ(\bTheta_{i\bd^t}^c) } \right)= -J_{\min}.
	\end{equation*}
	As $\epsilon$ is an arbitrarily small positive quantity, we have
	\begin{equation*}
	\underset{n\rightarrow\infty} {\lim} \frac{1}{n} \log \mfdr  =- J_{\min}.
	\end{equation*}
	Proceeding in the exact same way, using Corollary \ref{theorem:pBFDR_form}, we obtain
	\begin{equation*}
	\underset{n\rightarrow\infty} {\lim} \frac{1}{n} \log \fdrx = -H_{\min}.
	\end{equation*}
\end{proof}\\[2mm]

\begin{proof}[Proof of Corollary \ref{bfdr_lm}]
	Note that
	\begin{align*}
	mpBFDR =& E_{\bX_n} \left[\sum_{\bd\in\mathbb{D}} \frac{\sum_{i=1}^{m}d_i(1- w_i(\bd) )}{\sum_{i=1}^{m}d_i}
	\delta_\beta(\bd|\bX_n)\bigg{|}\dnm(\bd=\bzero|\bX_n)=0 \right]\\
	=& E_{\bX_n} \left[\sum_{\bd\in\mathbb{D}} \frac{\sum_{i=1}^{m}d_i(1- w_i(\bd) )}{\sum_{i=1}^{m}d_i}
	\dnm(\bd|\bX_n)\bigg{|}\dnm(\bd=\bzero|\bX_n)=0 \right]\\
	=& E_{\bX_n} \left[\sum_{\bd\in\mathbb{D}} \frac{\sum_{i=1}^{m}d_i(1- w_i(\bd) )}{\sum_{i=1}^{m}d_i} 
	I\left( \sum_{i=1}^{m}d_i>0 \right)\dnm(\bd|\bX_n) \right] 
	\frac{1}{P_{\bX_n}\left[ \dnm(\bd=\bzero|\bX_n)=0\right] } \\
	=&E_{\bX_n}\left[\sum_{\bd\in\mathbb{D}\setminus\left\lbrace \bzero\right\rbrace} \frac{\sum_{i=1}^{m}d_i(1- w_i(\bd) )}{\sum_{i=1}^{m}d_i} 
	\dnm(\bd|\bX_n) \right] \frac{1}{P_{\bX_n}\left[ \dnm(\bd=\bzero|\bX_n)=0\right] }.
	\end{align*}	
	From Theorem \ref{theorem:preferability}, we have $\frac{1}{n}\log\mfdr\rightarrow-J_{\min}$, that is, $\mfdr\rightarrow0$, as $n\rightarrow\infty$. Also we have 
	\begin{equation*}
	0\leq\sum_{\bd\in\mathbb{D}\setminus\left\lbrace \bzero\right\rbrace} \frac{\sum_{i=1}^{m}d_i(1- w_i(\bd) )}{\sum_{i=1}^{m}d_i} 
	\dnm(\bd|\bX_n)\leq \mfdr\leq1.
	\end{equation*}
	Therefore by the dominated convergence theorem, $E_{\bX_n}\left[\sum_{\bd\in\mathbb{D}\setminus\left\lbrace \bzero\right\rbrace} \frac{\sum_{i=1}^{m}d_i(1- w_i(\bd) )}{\sum_{i=1}^{m}d_i} 
	\dnm(\bd|\bX_n) \right]\rightarrow0$, as $n\rightarrow\infty$. From \ref{A2} we have $\bd^t\neq\bzero$ and from Theorem \ref{th:nmd_consistent} we have $E_{\bX_n}[\dnm(\bd^t|\bX_n)]\rightarrow1$. Thus $P_{\bX_n}\left[ \dnm(\bd=\bzero|\bX_n)=0\right]\rightarrow1$, as $n\rightarrow\infty$. This proves the result.
	
	Similarly it can be shown that $pBFDR\rightarrow 0$ as $n\rightarrow\infty$.
\end{proof}\\[2mm]

\begin{proof}[Proof of Theorem \ref{corollary:limit_bfnrs}] 
	The proof is similar to that of Theorem \ref{theorem:preferability}.
\end{proof}\\[2mm]

\begin{proof}[Proof of Corollary \ref{bfnr_lm}]
	Exploiting Theorem \ref{corollary:limit_bfnrs} and \ref{A2}, the theorem can be proved similarly as the proof of Corollary \ref{bfdr_lm}.
\end{proof}

\section{Proofs of results in Section \ref{sec:asymp_category_a}}
\begin{proof}[Proof of Theorem \ref{theorem:mpBFDR_alpha}]
	Theorem 3.4 of \ctn{chandra2017} shows that $mpBFDR$ is non-increasing in $\beta$. Hence, the maximum error that can be incurred is at $\beta=0$ where we 
	actually maximize $\sum_{i=1}^md_iw_{in}(\bd)$.
	Let \begin{align*}
	\hat\bd &=\argmax_{\bd\in\mathbb{D}} \sum_{i=1}^md_iw_{in}(\bd)= \argmax_{\bd\in\mathbb{D}}\left[  \sum_{i=1}^{m_1}d_iw_{in}(\bd)+ \sum_{i=m_1+1}^m d_iw_{in}(\bd) \right] 
	\end{align*}	
	Since the groups in $\{G_1,G_2,\cdots,G_{m_1} \}$ have no overlap with those in $\{G_{m_1+1},\cdots,G_m\}$, $\sum_{i=1}^{m_1}d_iw_{in}(\bd)$ and 
	$\sum_{i=m_1+1}^{m}d_iw_{in}(\bd)$ can be maximized separately.

	Let us define the following notations:
	\[
	Q_{\bd}^{m_1}=Q_{\bd}\cap \{1,2,\cdots,m_1\},~	Q_{\bd}^{m_1c}=\{1,2,\cdots,m_1\}\setminus Q_{\bd}^{m_1}.
	\]
	
	Now,
	\begin{align*}
	&\sum_{i=1}^{m_1} d_iw_{in}(\bd) - \sum_{i=1}^{m_1} d_i^tw_{in}(\bd^t)\\
	=& \left[ \sum_{i\in Q_{\bd}^{m_1}} d_iw_{in}(\bd) - \sum_{i\in Q_{\bd}^{m_1}} d_i^tw_{in}(\bd^t) \right] + \left[ \sum_{i\in Q_{\bd}^{m_1c}} d_iw_{in}(\bd) 
	- \sum_{i\in Q_{\bd}^{m_1c}} d_i^tw_{in}(\bd^t) \right]\\
	=& \sum_{i\in Q_{\bd}^{m_1c}} d_iw_{in}(\bd) - \sum_{i\in Q_{\bd}^{m_1c}} d_i^tw_{in}(\bd^t), 
	\end{align*}
	since for any $\bd$, $\sum_{i\in Q_{\bd}^{m_1}} d_iw_{in}(\bd)=\sum_{i\in Q_{\bd}^{m_1}} d_i^tw_{in}(\bd^t)$ by definition of $Q_{\bd}^{m_1}$.
	
	Note that $\sum_{i\in Q_{\bd}^{m_1c}} d_i^tw_{in}(\bd^t)$ can not be zero as it contradicts \ref{ass_groups} that 
	``$G_1,G_2,\cdots,G_{m_1}$ have at least one false null hypothesis." From (\ref{eq:shalizi1}) and (\ref{eq:shalizi2}), we have
	\begin{align*}
	\sum_{i\in Q_{\bd}^{m_1c}} d_iw_{in}(\bd) &\rightarrow 0 \mbox{ for all $\bd\neq\bd^t$, and}\\
	\sum_{i\in Q_{\bd}^{m_1c}} d_i^tw_{in}(\bd^t) &\rightarrow \sum_{i\in Q_{\bd}^{m_1c}} d_i^t >0.
	\end{align*} 
	Hence, for large enough $n$, for $\bd\neq\bd^t$, 
	\begin{align*}
	\sum_{i=1}^md_iw_{in}(\bd) - \sum_{i=1}^md_i^tw_{in}(\bd^t)<0.
	\end{align*}
	In other words, $\bd^t$ (or $\bd$ such that $d_i=d_i^t$ for all $i=1,\cdots,m_1$) maximizes $\sum_{i=1}^{m_1} d_iw_{in}(\bd)$ when $n$ is large enough. 
	
	Let us now consider the term $\sum_{i=m_1+1}^m d_iw_{in}(\bd).$ Note that $\sum_{i=m_1+1}^m d_i^tw_{in}(\bd^t)= 0$ by \ref{ass_groups}. 
	For any finite $n$, $\sum_{i=m_1+1}^m d_iw_{in}(\bd)$ is maximized for some decision configuration $\tilde\bd$ where $\tilde d_i=1$ for at least one 
	$i\in \{m_1+1,\cdots,m\}$. 
	In that case, $\hat{\bd}^t=(d^t_1,\ldots,d^t_{m_1},\tilde d_{m_1+1},\tilde d_{m_1+2},\ldots,\tilde d_m)$, so that
	\begin{equation*}
	\lim_{n\rightarrow\infty} \frac{\sum_{i=1}^m \hat d_i(1-w_{in}(\hat \bd))}{\sum_{i=1}^m\hat d_i}\geq \frac{1}{\sum_{i=1}^md_i^t+1},
	\end{equation*}	
	almost surely, for all data sequences.
	Boundedness of $\frac{\sum_{i=1}^m d_i(1-w_{in}(\bd))}{\sum_{i=1}^md_i}$ for all $\bd$ and $\bX_n$ ensures uniform integrability, which, in conjunction with
	the simple observation that for $\beta=0$, $P\left(\dnm(\bd=\bzero|\bX_n)=0\right)=1$ 
	for all $n\geq 1$, guarantees
	that under \ref{ass_groups} it is possible to incur $mpBFDR\geq \frac{1}{\sum_{i=1}^md_i^t+1}$ asymptotically.
	
	Now, if $G_{m_1+1},\cdots,G_m$'s are all disjoint, each consisting of only one true null hypothesis, 
	then $\sum_{i=m_1+1}^m d_iw_{in} (\bd)$ will be maximized by $\tilde\bd$ where $\tilde d_i=1$ for all $i\in \{m_1+1,\cdots,m\}$. 
	Since $d^t_i$; $i=1,\ldots,m_1$ maximizes $\sum_{i=1}^{m_1} d_iw_{in}(\bd)$ for large $n$, it follows that $\hat\bd=(d^t_1,\ldots,d^t_{m_1},1,1,\ldots,1)$
	is the maximizer of $\sum_{i=1}^m d_iw_{in}(\bd)$ for large $n$.
	In this case, almost surely for all data sequences,
	\begin{equation}
	\lim_{n\rightarrow\infty} \frac{\sum_{i=1}^m \hat d_i(1-w_{in}(\hat \bd))}{\sum_{i=1}^m\hat d_i} = \frac{m-m_1}{\sum_{i=1}^md_i^t+m-m_1}.
	\label{eq:upper_mpBFDR2}
	\end{equation}
	In this case, the maximum $mpBFDR$ that can be incurred is at $\beta=0$, and is given by  
	\begin{equation*}
	\lim_{n\rightarrow\infty} mpBFDR_{\beta=0}=\frac{m-m_1}{\sum_{i=1}^md_i^t+m-m_1}.
	\end{equation*}
	This is also the maximum $mpBFDR$ that can be incurred among all possible configurations of $G_{m_1+1},\cdots,G_m$. Hence, for any arbitrary configuration of groups, 
	the maximum $mpBFDR$ that can be incurred lies in the interval $\left( \frac{1}{\sum_{i=1}^md_i^t+1}, \frac{m-m_1}{\sum_{i=1}^md_i^t+m-m_1} \right)$ asymptotically.	
\end{proof}\\[2mm]

\begin{proof}[Proof of Theorem \ref{corollary:beta_n0}] Let $\epsilon<E-\alpha$. Then from (\ref{eq:lim_mpbfdr}), there exists $n(\epsilon)$ such that for all $n>n(\epsilon)$, 
	$mpBFDR_{\beta=0}> E-\epsilon>\alpha$. \ctn{chandra2017} have shown that $mpBFDR$ is continuous and decreasing in $\beta$. Hence, for all $n>n(\epsilon)$, 
	there exists $\beta_n\in(0,1)$ such that $mpBFDR=\alpha$. 	
	
	Now, if possible let $\lim\inf_{n\rightarrow\infty} \beta_n>0$. Then from Theorem \ref{theorem:mpBFDR_form1} we see that $mpBFDR$ decays to $0$ exponentially fast, 
	which contradicts the current situation that $mpBFDR=\alpha$ for $n>n(\epsilon)$. Hence, $\lim_{n\rightarrow\infty} \beta_n=0$.	
\end{proof}\\[2mm]

\begin{proof}[Proof of Theorem \ref{theorem:pBFDR_alpha2}]
	Theorems 3.1 and 3.4 of \ctn{chandra2017} together state that $mpBFDR$ is continuous and non-increasing in $\beta$. It is to be noted that there is no assumption or restriction on the 
	configurations of $G_i$'s. 
	Hence it is easily seen that $pBFDR$ is also continuous and non-increasing in $\beta$.
	
	Let $\hat\bd$ be the optimal decision configuration with respect to the additive loss function. Note that for $\beta=0$, $\hat d_i=1$ for all $i$. In that case,
	\begin{equation*}
	\lim_{n\rightarrow\infty}\frac{\sum_{i=1}^m \hat d_i(1-v_{in})}{\sum_{i=1}^m\hat d_i}=\frac{m_0}{m}.
	\end{equation*}
	Therefore, 
	it is possible to incur error arbitrarily close to $m_0/m$ for large enough sample size. Hence, the remaining part of the proof follows 
	in the same lines as the arguments in the proof of Theorem \ref{corollary:beta_n0}.	
\end{proof}\\[2mm]

\begin{proof}[Proof of Theorem \ref{theorem:mpBFDR_alpha2}]
	Take $\epsilon<\frac{m_0}{m}-\alpha$. Since for any multiple testing method, $mpBFDR_{\beta}>pBFDR_{\beta}$, and since $\underset{n\rightarrow\infty}{\lim}~pBFDR_{\beta=0}=\frac{m_0}{m}$ 
	by the proof of
	Theorem \ref{theorem:pBFDR_alpha2}, it follows that
	there exists $n_0(\epsilon)$ such that for all $n> n_0(\epsilon)$,
	\begin{equation*}
	mpBFDR_{\beta=0}> \frac{m_0}{m}-\epsilon>\alpha.
	\end{equation*}
	Since $mpBFDR$ is continuous and non-increasing in $\beta$, for 
	for $n>n_0(\epsilon)$, there exists a sequence $\beta_n\in[0,1]$ such that 
	\begin{equation}
	mpBFDR_{\beta_n}=\alpha.
	\end{equation}
	If possible, let $\lim\inf_{n\rightarrow\infty} \beta_n>0$. This, however, contradicts Theorem \ref{theorem:mpBFDR_form1} which asserts that $mpBFDR$ decays to $0$ exponentially fast.
	Hence, $\lim_{n\rightarrow\infty} \beta_n=0$.
\end{proof}\\[2mm]

\begin{proof}[Proof of Theorem \ref{theorem:mpBFNR_alpha}]
	From Theorem \ref{corollary:beta_n0} we have that for any feasible choice of $\alpha$, there exists a sequence $\{\beta_n\}$ such that $\lim_{n\rightarrow\infty} mpBFDR_{\beta_n}=\alpha$. Now, for the sequence $\{\beta_n\}$, let $\hat{\bd_n}$ be the optimal decision configuration for sample size $n$, that is, $\dnm \left(\hat{\bd_n}|\bX_n\right)=1$ for sufficiently large $n$. 
	Following the proof of Theorem \ref{theorem:mpBFDR_alpha} and \ref{corollary:beta_n0} we see that $\hat d_{in}=d_i^t$ for $i=1,\cdots,m_1$ and $\sum_{i=m_1+1}^m \hat d_{in}>0$.
	Now recall from (\ref{eq:shalizi1_muller}) that for any arbitrary $\epsilon>0$, there exists $n(\epsilon)$ such that for all $n>n(\epsilon)$, 
	$v_{in} < \exp\left[-n\left(J\left(H_{1i}\right)-\epsilon\right)\right]$ if $d^t_i=0$. Therefore, 
	\begin{align*}
	&\frac{\sum_{i=1}^m(1-\hat d_{in})v_{in}} {\sum_{i=1}^m(1-\hat d_{in})} \leq \frac{\sum_{i=1}^m(1- d_i^t)v_{in} } {\sum_{i=1}^m(1-\hat d_{in})} 
	< e^{n\epsilon}\times \frac{\sum_{i=1}^m(1- d_i^t)e^{-nJ(H_{1i})} } {\sum_{i=1}^m(1-\hat d_{in})}\\
	\Rightarrow~&\fnrx< e^{n\epsilon}\times \frac{\sum_{i=1}^m(1- d_i^t)e^{-nJ(H_{1i})} } {\sum_{i=1}^m(1-\hat d_{in})}\\
	\Rightarrow~&\frac{1}{n}\log\left(\fnrx \right) <\epsilon+\frac{1}{n} \log\left[\sum_{i=1}^m(1- d_i^t)e^{-nJ(H_{1i})}\right]
	-\frac{1}{n}\log\left[ \sum_{i=1}^m(1-\hat d_{in}) \right].\\ 
	\end{align*}
	Note that \begin{align*}
	&\lim_{n\rightarrow\infty}\frac{1}{n}\log\left[ \sum_{i=1}^m(1-\hat d_{in})\right]=0~\mbox{as $m$ is finite, and}\\
	&\lim_{n\rightarrow\infty} \frac{1}{n} \log\left[\sum_{i=1}^m(1- d_i^t)e^{-nJ(H_{1i})}\right] = -\tilde H_{\min} \text{ from L'H\^opital's rule.}
	\end{align*}
	As $\epsilon$ is any arbitrary positive quantity we have
	\begin{equation*}
	\limsup_{n\rightarrow\infty} \frac{1}{n}\log\left(\fnrx \right)\leq -\tilde H_{\min}.
	\end{equation*}
\end{proof}

\section{Additional results to Section \ref{sec:ar1} and proofs}
\label{subsec:A2}
\begin{proof}[Proof of Theorem \ref{theorem:ar1}]
	The proof of this theorem is complete if \ref{shalizi1}-\ref{shalizi7} are verified for the model \eqref{eq:modeled_ar1}. We do this through the following lemmas and theorems stated and proved in this section.
\end{proof}

\begin{lemma}
	\label{th:KL_ar}
	Under the model assumption \ref{ar1}-\ref{ar2}, the KL-divergence rate $h(\btheta)$ defined in \eqref{eq:KL_rate} exists and is given by
	\begin{multline}
	h(\theta)=\log\left(\frac{\sigma}{\sigma_0}\right)+\left(\frac{1}{2\sigma^2}-\frac{1}{2\sigma^2_0}\right)\left(\frac{\sigma^2_0}{1-\rho^2_0}
	+\frac{\bbeta^\prime_0\bSigma_z\bbeta_0}{1-\rho^2_0}\right)\\
	+\left(\frac{\rho^2}{2\sigma^2}-\frac{\rho^2_0}{2\sigma^2_0}\right)\left(\frac{\sigma^2_0}{1-\rho^2_0}+\frac{\bbeta^\prime_0\bSigma_z\bbeta_0}{1-\rho^2_0}\right)
	+\frac{1}{2\sigma^2}\bbeta^\prime\bSigma_z\bbeta-\frac{1}{2\sigma^2_0}\bbeta^\prime_0\bSigma_z\bbeta_0\\
	-\left(\frac{\rho}{\sigma^2}-\frac{\rho_0}{\sigma^2_0}\right)\left(\frac{\rho_0\sigma^2_0}{1-\rho^2_0}+\frac{\rho_0\bbeta^\prime_0\bSigma_z\bbeta_0}{1-\rho^2_0}\right)
	-\left(\frac{\bbeta}{\sigma^2}-\frac{\bbeta_0}{\sigma^2_0}\right)^\prime\bSigma_z\bbeta_0.\label{eq:htheta}
	\end{multline}
\end{lemma}
\begin{proof}
	It is easy to see that under the true model $P$,
	\begin{align}
	E(x_t)&=\sum_{k=1}^t\rho^{t-k}_0\bz^\prime_k\bbeta_0;
	\label{eq:mean_true}\\
	E(x_{t+h}x_t)&\sim\frac{\sigma^2_0\rho^h_0}{1-\rho^2_0}+E(x_{t+h})E(x_t);~h\geq 0,\nonumber
	\end{align}
	where for any two sequences $\{a_t\}_{t=1}^{\infty}$ and $\{b_t\}_{t=1}^{\infty}$, $a_t\sim b_t$ stands for $a_t/b_t\rightarrow 1$ as $t\rightarrow\infty$.
	Hence, 
	\begin{equation}
	E(x^2_t)\sim \frac{\sigma^2_0}{1-\rho^2_0}+\left(\sum_{k=1}^t\rho^{t-k}_0\bz^\prime_k\bbeta_0\right)^2.
	\label{eq:lim_mean_sq}
	\end{equation}

	Now let 
	\begin{equation*}
	\varrho_t=\sum_{k=1}^t\rho^{t-k}_0\bz^\prime_k\bbeta_0 
	\end{equation*}
	and for $t>t_0$, 
	\begin{equation*}
	\tilde\varrho_t=\sum_{k=t-t_0}^t\rho^{t-k}_0\bz^\prime_k\bbeta_0, 
	\end{equation*}
	where, for any $\varepsilon>0$, $t_0$ is so large that 
	\begin{equation}
	\frac{C\rho^{t_0+1}_0}{(1-\rho^{t_0})}\leq\varepsilon.
	\label{eq:small1}
	\end{equation}
	It follows, using \ref{ar2} and \eqref{eq:small1}, that for $t>t_0$,
	\begin{equation}
	\left|\varrho_t-\tilde\varrho_t\right|\leq \sum_{k=1}^{t-t_0-1}\rho^{t-k}_0\left|\bz^\prime_k\bbeta_0\right|\leq\frac{C\rho^{t_0+1}_0(1-\rho^{t-t_0+1}_0)}{1-\rho_0}\leq\varepsilon.
	\label{eq:diff_inequality1}
	\end{equation}
	Hence, for $t>t_0$,
	\begin{equation}
	\tilde\varrho_t-\varepsilon\leq\varrho_t\leq\tilde\varrho_t+\varepsilon.
	\label{eq:diff_inequality2}
	\end{equation}
	Now,
	\begin{align*}
	\frac{\sum_{t=1}^n\tilde\varrho_t}{n}&=\rho^{t_0}_0\left(\frac{\sum_{t=1}^n\bz_t}{n}\right)^\prime\bbeta_0+\rho^{t_0-1}_0\left(\frac{\sum_{t=2}^n\bz_t}{n}\right)^\prime\bbeta_0
	+\rho^{t_0-2}_0\left(\frac{\sum_{t=3}^n\bz_t}{n}\right)^\prime\bbeta_0+\cdots\notag\\
	&\qquad\qquad\cdots+\rho_0\left(\frac{\sum_{t=t_0}^n\bz_t}{n}\right)^\prime\bbeta_0+\left(\frac{\sum_{t=t_0+1}^n\bz_t}{n}\right)^\prime\bbeta_0\notag\\
	&\rightarrow 0,~\mbox{as}~n\rightarrow\infty,~\mbox{by virtue of \ref{ar1}}.
	\end{align*}
	Similarly, it is easily seen, using \ref{ar1}, that
	\begin{equation*}
	\frac{\sum_{t=1}^n\tilde\varrho^2_t}{n}\rightarrow \left(\frac{1-\rho^{2(2t_0+1)}}{1-\rho^2_0}\right)\bbeta^\prime_0\bSigma_z\bbeta_0,~\mbox{as}~n\rightarrow\infty.
	\end{equation*}
	
	Since (\ref{eq:diff_inequality1}) implies that for $t>t_0$, $\tilde \varrho^2_t+\varepsilon^2-2\varepsilon\tilde \varrho_t\leq \varrho^2_t
	\leq\tilde \varrho^2_t+\varepsilon^2+2\varepsilon\tilde \varrho_t$,
	it follows that
	\begin{equation*}
	\underset{n\rightarrow\infty}{\lim}~\frac{\sum_{t=1}^n\varrho^2_t}{n}=\underset{n\rightarrow\infty}{\lim}~\frac{\sum_{t=1}^n\tilde\varrho^2_t}{n}+\varepsilon^2
	=\left(\frac{1-\rho^{2(2t_0+1)}}{1-\rho^2_0}\right)\bbeta^\prime_0\bSigma_z\bbeta_0+\varepsilon^2,
	\end{equation*}
	and since $\epsilon>0$ is arbitrary, it follows that
	\begin{equation}
	\underset{n\rightarrow\infty}{\lim} \frac{\sum_{t=1}^n\varrho^2_t}{n}
	=\frac{\bbeta^\prime_0\bSigma_z\bbeta_0}{1-\rho^2_0}.
	\label{eq:varrho_sq_limit2}
	\end{equation}
	
	Hence, it also follows from (\ref{eq:mean_true}), (\ref{eq:lim_mean_sq}), \ref{ar1} and (\ref{eq:varrho_sq_limit2}), that 
	
	\begin{equation*}
	\frac{\sum_{t=1}^nE(x^2_t)}{n}\rightarrow \frac{\sigma^2_0}{1-\rho^2_0}+\frac{\bbeta^\prime_0\bSigma_z\bbeta_0}{1-\rho^2_0},~\mbox{as}~n\rightarrow\infty.
	\end{equation*}
	and
	\begin{equation}
	\frac{\sum_{t=1}^nE(x^2_{t-1})}{n}\rightarrow \frac{\sigma^2_0}{1-\rho^2_0}+\frac{\bbeta^\prime_0\bSigma_z\bbeta_0}{1-\rho^2_0},~\mbox{as}~n\rightarrow\infty.
	\label{eq:lim3}
	\end{equation}
	Now note that 
	\begin{equation}
	x_tx_{t-1}=\rho_0x^2_{t-1}+\bz^\prime_t\bbeta_0x_{t-1}+\epsilon_tx_{t-1}.
	\label{eq:cov_lag1}
	\end{equation}
	Using (\ref{eq:ass2}), (\ref{eq:diff_inequality2}) and arbitrariness of $\varepsilon>0$ it is again easy to see that
	\begin{equation}
	\frac{\sum_{t=1}^n\bz^\prime_t\bbeta_0E(x_{t-1})}{n}\rightarrow 0,~\mbox{as}~n\rightarrow\infty.
	\label{eq:lim4}
	\end{equation}
	Also, since for $t=1,2,\ldots,$ $E(\epsilon_tx_{t-1})=E(\epsilon_t)E(x_{t-1})$ by independence, and since $E(\epsilon_t)=0$ for $t=1,2,\ldots$, it holds that
	\begin{equation}
	\frac{\sum_{t=1}^nE\left(\epsilon_tx_{t-1}\right)}{n}= 0,~\mbox{for all}~n=1,2,\ldots.
	\label{eq:lim5}
	\end{equation}
	Combining \eqref{eq:lim3}-\eqref{eq:lim5} 
	we obtain
	\begin{equation*}
	\frac{\sum_{t=1}^nE\left(x_tx_{t-1}\right)}{n}\rightarrow \frac{\rho_0\sigma^2_0}{1-\rho^2_0}+\frac{\rho_0\bbeta^\prime_0\bSigma_z\bbeta_0}{1-\rho^2_0}.
	\end{equation*}
	Also \ref{ar1} along with (\ref{eq:diff_inequality2}) and arbitrariness of $\varepsilon>0$ yields
	\begin{align*}
	\frac{\sum_{t=1}^n\bz_tE(x_t)}{n}\rightarrow \bSigma_z\bbeta_0,~\mbox{as}~n\rightarrow\infty;\\
	\frac{\sum_{t=1}^n\bz_tE(x_{t-1})}{n}\rightarrow \bzero,~\mbox{as}~n\rightarrow\infty.
	\end{align*}
	Using assumptions \ref{ar1} and \ref{ar2} and the above results, it follows that
	\begin{multline*}
	h(\btheta)=\underset{n\rightarrow\infty}{\lim}~\frac{1}{n}E\left[-\log R_n(\btheta)\right]=\log\left(\frac{\sigma}{\sigma_0}\right)+\left(\frac{1}{2\sigma^2}-\frac{1}{2\sigma^2_0}\right)\left(\frac{\sigma^2_0}{1-\rho^2_0}
	+\frac{\bbeta^\prime_0\bSigma_z\bbeta_0}{1-\rho^2_0}\right)\\
	+\left(\frac{\rho^2}{2\sigma^2}-\frac{\rho^2_0}{2\sigma^2_0}\right)\left(\frac{\sigma^2_0}{1-\rho^2_0}+\frac{\bbeta^\prime_0\bSigma_z\bbeta_0}{1-\rho^2_0}\right)
	+\frac{1}{2\sigma^2}\bbeta^\prime\bSigma_z\bbeta-
	\frac{1}{2\sigma^2_0}\bbeta^\prime_0\bSigma_z\bbeta_0\\
	-\left(\frac{\rho}{\sigma^2}-\frac{\rho_0}{\sigma^2_0}\right)\left(\frac{\rho_0\sigma^2_0}{1-\rho^2_0}+\frac{\rho_0\bbeta^\prime_0\bSigma_z\bbeta_0}{1-\rho^2_0}\right)
	-\left(\frac{\bbeta}{\sigma^2}-\frac{\bbeta_0}{\sigma^2_0}\right)^\prime\bSigma_z\bbeta_0.
	\end{multline*}
	In other words, \ref{s2} holds, with $h(\btheta)$ given by equation \eqref{eq:htheta}. 
\end{proof}

\begin{theorem}
	\label{th:eqpartition}
	For each $\btheta \in \bTheta$, the generalized or relative asymptotic equipartition property holds, and so
	\begin{equation*}
	\lim_{n\rightarrow\infty} \frac{1}{n}\log R_n(\btheta) = -h(\btheta).
	\end{equation*}
	The convergence is uniform over any compact subset of $\bTheta$.
\end{theorem}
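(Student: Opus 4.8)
The plan is to compute $\frac{1}{n}\log R_n(\btheta)$ explicitly from the Gaussian likelihoods of \eqref{eq:true_ar1} and \eqref{eq:modeled_ar1}, show that it converges almost surely to a limit that depends on $\btheta$ only through a \emph{fixed}, $\btheta$-free collection of empirical second moments of the process, identify that limit with $-h(\btheta)$ via Lemma \ref{th:KL_ar}, and then lift pointwise convergence to uniform convergence on compacta by a continuity argument. Throughout, the covariate sequences $\{\bz_t\}$ are treated as fixed realizations satisfying \ref{ar1}, and ``almost surely'' refers to the randomness in the innovations $\{\epsilon_t\}$; this is exactly Shalizi's relative asymptotic equipartition assumption being verified for the model at hand.

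Since $x_0\equiv 0$ and the innovations are i.i.d.\ Gaussian,
\begin{equation*}
\frac{1}{n}\log R_n(\btheta)=-\frac{1}{2}\log\frac{\sigma^2}{\sigma_0^2}-\frac{1}{2\sigma^2}S_n(\rho,\bbeta)+\frac{1}{2\sigma_0^2}S_n(\rho_0,\bbeta_0),\qquad S_n(\rho,\bbeta)=\frac{1}{n}\sum_{t=1}^n\bigl(x_t-\rho x_{t-1}-\bz_t'\bbeta\bigr)^2 .
\end{equation*}
Expanding $S_n(\rho,\bbeta)$ exhibits it as a quadratic form in $(\rho,\bbeta)$ whose coefficients are the empirical moments $\frac1n\sum x_t^2,\ \frac1n\sum x_{t-1}^2,\ \frac1n\sum x_tx_{t-1},\ \frac1n\sum x_t\bz_t',\ \frac1n\sum x_{t-1}\bz_t',\ \frac1n\sum\bz_t\bz_t'$, while $S_n(\rho_0,\bbeta_0)=\frac1n\sum\epsilon_t^2$. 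I would then establish the a.s.\ limits of these moments using the decomposition $x_t=a_t+e_t$ with $a_t=\bbeta_0'\sum_{j=0}^{t-1}\rho_0^j\bz_{t-j}$ (deterministic) and $e_t=\sum_{j=0}^{t-1}\rho_0^j\epsilon_{t-j}$ (mean zero). The deterministic pieces are handled by \ref{ar1} alone: the geometric weighting combined with $\frac1n\sum\bz_t\bz_t'\to\bSigma_z$ and $\frac1n\sum\bz_{t+k}\bz_t'\to\bzero$ for $k\ge1$ (see \eqref{eq:ass2}) gives $\frac1n\sum a_t^2\to\bbeta_0'\bSigma_z\bbeta_0/(1-\rho_0^2)$, $\frac1n\sum a_ta_{t-1}\to\rho_0\bbeta_0'\bSigma_z\bbeta_0/(1-\rho_0^2)$, $\frac1n\sum a_t\bz_t'\to\bbeta_0'\bSigma_z$ and $\frac1n\sum a_{t-1}\bz_t'\to\bzero'$. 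The stochastic pieces are Ces\`aro averages of the stationary AR(1) noise recursion $e_t=\rho_0e_{t-1}+\epsilon_t$, giving $\frac1n\sum e_t^2\to\sigma_0^2/(1-\rho_0^2)$ and $\frac1n\sum e_te_{t-1}\to\rho_0\sigma_0^2/(1-\rho_0^2)$ a.s.\ by the strong law; and the cross terms $\frac1n\sum a_te_t$, $\frac1n\sum a_te_{t-1}$ and the components of $\frac1n\sum\bz_te_t$ tend to $0$ a.s., the first two by rewriting $\frac1n\sum a_te_t=\frac1n\sum_s b_s\epsilon_s$ with $b_s=\sum_{t>s}\rho_0^{t-s}a_t$ bounded (using $\sup_t|\bz_t'\bbeta_0|<C$ from \ref{ar2}) and invoking Kolmogorov's SLLN for independent, mean-zero, bounded-variance summands, and the last via a second-moment bound exploiting the exponential decay of $\mathrm{Cov}(e_t,e_{t'})$ together with $\frac1n\sum\bz_t\bz_t'=O(1)$. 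The $x_0=0$ start contributes only a boundary term, negligible in the Ces\`aro average.

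Collecting these limits yields
\begin{equation*}
\frac{1}{n}\log R_n(\btheta)\ \longrightarrow\ -\log\frac{\sigma}{\sigma_0}-\frac12+\frac{1}{2\sigma^2}\Bigl[\gamma_0\bigl(1+\rho^2-2\rho\rho_0\bigr)+\bbeta'\bSigma_z\bbeta-2\bbeta_0'\bSigma_z\bbeta\Bigr]\quad\text{a.s.},
\end{equation*}
with $\gamma_0=(\sigma_0^2+\bbeta_0'\bSigma_z\bbeta_0)/(1-\rho_0^2)$, and a short algebraic rearrangement (grouping the $\sigma_0^{-2}$-terms and using $\gamma_0(1-\rho_0^2)=\sigma_0^2+\bbeta_0'\bSigma_z\bbeta_0$) shows the right-hand side equals $-h(\btheta)$ for $h$ as in \eqref{eq:htheta}. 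For the uniformity claim, note that for each $n$ we may write $\frac1n\log R_n(\btheta)=g(\btheta,M_n)$, where $M_n$ is the ($\btheta$-independent) vector of the six empirical moments above and $g$ is jointly continuous, being polynomial in $(\rho,\bbeta)$ and in the moments and smooth in $\sigma$ on $\{\sigma>0\}$; likewise $-h(\btheta)=g(\btheta,M)$ with $M=\lim_nM_n$. A compact $K\subseteq\bTheta=\mathbb R^{m+1}\times\mathbb R^+$ has its $\sigma$-coordinate bounded away from $0$, so $g$ is uniformly continuous on $K\times B$ for a compact ball $B$ containing $M$ and all $M_n$ with $n$ large; hence $\sup_{\btheta\in K}\bigl|\tfrac1n\log R_n(\btheta)+h(\btheta)\bigr|=\sup_{\btheta\in K}\bigl|g(\btheta,M_n)-g(\btheta,M)\bigr|\to0$ a.s. I expect the main obstacle to be the rigorous a.s.\ convergence of the second-order empirical moments of the AR(1) process driven by merely asymptotically stationary covariates: justifying the interchange of the geometric sums with the Ces\`aro / strong-law limits, disposing of the covariate--noise cross terms, and in particular controlling $\frac1n\sum\bz_te_t$, whose entries $z_{kt}$ are not assumed individually bounded (only $\frac1n\sum z_{kt}^2=O(1)$), so that a direct SLLN is unavailable and one needs an $L^2$-bound plus a Borel--Cantelli / subsequence argument. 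The remaining steps — the likelihood algebra, the matching with \eqref{eq:htheta}, and the uniform-continuity argument — are routine.
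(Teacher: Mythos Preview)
Your approach is correct and closely parallels the paper's: both decompose $x_t=\varrho_t+\tilde\epsilon_t$ (your $a_t+e_t$), establish the same almost-sure limits for the six empirical second moments, and substitute into the explicit Gaussian log-likelihood ratio. The differences are in the technical tools. For the moment limits, the paper invokes the ergodic theorem for irreducible aperiodic Markov chains (treating $\tilde\epsilon_t$, and somewhat loosely also $\varrho_t$ and $\bz_t\tilde\epsilon_t$, as such chains), whereas you use direct SLLN / Kolmogorov / $L^2$--Borel--Cantelli arguments; your route is more elementary and sidesteps the paper's slightly awkward move of calling the deterministic sequence $\varrho_t$ a Markov chain sample path. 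For uniformity on compacta, the paper argues via bounded partial derivatives $\Rightarrow$ Lipschitz $\Rightarrow$ stochastic equicontinuity $\Rightarrow$ stochastic Ascoli, while your factorization $\tfrac{1}{n}\log R_n(\btheta)=g(\btheta,M_n)$ with $M_n$ a $\btheta$-free vector of moments and $g$ jointly continuous makes the uniformity almost automatic once $M_n\to M$; this is a genuine simplification. One minor slip: in your displayed limit the signs of the constant $\tfrac12$ and of the bracket multiplied by $\tfrac{1}{2\sigma^2}$ are reversed --- since $\tfrac{1}{2\sigma_0^2}S_n(\rho_0,\bbeta_0)\to+\tfrac12$ and the $S_n(\rho,\bbeta)$ term carries a minus sign, the correct limit is $-\log\tfrac{\sigma}{\sigma_0}+\tfrac12-\tfrac{1}{2\sigma^2}[\cdots]$ --- but this typo does not affect the argument.
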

\label{subsec:A3}
\begin{proof}
	Note that 
	\begin{equation*}
	x_t=\sum_{k=1}^t\rho^{t-k}_0\bz^\prime_k\bbeta_0+\sum_{k=1}^t\rho^{t-k}_0\epsilon_k,
	\end{equation*}
	where $\tilde\epsilon_t=\sum_{k=1}^t\rho^{t-k}_0\epsilon_k$ is an asymptotically stationary Gaussian process with mean zero and covariance
	\begin{equation*}
	cov(\tilde\epsilon_{t+h},\tilde\epsilon_t)\sim\frac{\sigma^2_0\rho^h_0}{1-\rho^2_0},~\mbox{where}~h\geq 0.
	\end{equation*}
	Then 
	\begin{equation}
	\frac{\sum_{t=1}^nx^2_t}{n}=\frac{\sum_{t=1}^n\varrho^2_t}{n}+\frac{\sum_{t=1}^n\tilde\epsilon^2_t}{n}+
	\frac{2\sum_{t=1}^n\tilde\epsilon_t\varrho_t}{n}.
	\label{eq:xt_sq_average}
	\end{equation}
	By (\ref{eq:varrho_sq_limit2}), the first term of the right hand side of (\ref{eq:xt_sq_average}) converges to $\frac{\bbeta^\prime_0\bSigma_z\bbeta_0}{1-\rho^2_0}$, 
	as $n\rightarrow\infty$, and since $\tilde\epsilon_t$; $t=1,2,\ldots$
	is also an irreducible and aperiodic Markov chain, by the ergodic theorem it follows that the second term of (\ref{eq:xt_sq_average}) converges to $\sigma^2_0/(1-\rho^2_0)$
	almost surely, as $n\rightarrow\infty$.
	Also observe that $\varrho_t$; $t=1,2,\ldots$, is also a sample path of an irreducible and aperiodic stationary Markov chain, with univariate stationary distribution 
	having mean $0$ and variance
	$\bbeta^\prime_0\bSigma_z\bbeta_0\times\underset{t\rightarrow\infty}{\lim}~\sum_{k=1}^t\rho^{2(t-k)}_0=\frac{\bbeta^\prime_0\bSigma_z\bbeta_0}{1-\rho^2_0}$. 
	Since for each $t$, $\tilde\epsilon_t$ and $\varrho_t$ are independent, $\tilde\epsilon_t\varrho_t$; $t=1,2,\ldots$, is also an irreducible and aperiodic 
	Markov chain having a stationary distribution with mean 0 and variance $\frac{\sigma^2_0\bbeta^\prime_0\bSigma_z\bbeta_0}{(1-\rho^2_0)^2}$.
	Hence, by the ergodic theorem, the third term of (\ref{eq:xt_sq_average}) converges to zero, almost surely, as $n\rightarrow\infty$. It follows that
	\begin{equation}
	\frac{\sum_{t=1}^nx^2_t}{n}\rightarrow\frac{\sigma^2_0}{1-\rho^2_0}+\frac{\bbeta^\prime_0\bSigma_z\bbeta_0}{1-\rho^2_0},
	\label{eq:xt_sq_average2}
	\end{equation}
	and similarly,
	\begin{equation}
	\frac{\sum_{t=1}^nx^2_{t-1}}{n}\rightarrow\frac{\sigma^2_0}{1-\rho^2_0}+\frac{\bbeta^\prime_0\bSigma_z\bbeta_0}{1-\rho^2_0}.
	\label{eq:xt_sq_average3}
	\end{equation}

	Now, since $x_t=\varrho_t+\tilde\epsilon_t$, it follows using \ref{ar1} and (\ref{eq:diff_inequality2}) that
	\begin{equation}
	\underset{n\rightarrow\infty}{\lim}~\frac{\sum_{t=1}^n\bz_tx_t}{n}=\left(\underset{n\rightarrow\infty}{\lim}~\frac{\sum_{t=1}^n\bz_t\bz^\prime_t}{n}\right)\bbeta_0
	+\underset{n\rightarrow\infty}{\lim}~\frac{\sum_{t=1}^n\bz_t\tilde\epsilon_t}{n}.
	\label{eq:z_tx_t_average}
	\end{equation}
	By \ref{ar1}, the first term on the right hand side of (\ref{eq:z_tx_t_average}) is $\bSigma_z\bbeta_0$. For the second term, note that it follows from \ref{ar1} that 
	$\bz_t\tilde\epsilon_t$; $t=1,2,\ldots$,
	is sample path of an irreducible and aperiodic Markov chain with a stationary distribution having zero mean. Hence, by the ergodic theorem, it follows that
	the second term of (\ref{eq:z_tx_t_average}) is $\bzero$, almost surely. In other words, almost surely,
	\begin{equation}
	\frac{\sum_{t=1}^n\bz_tx_t}{n}\rightarrow \bSigma_z\bbeta_0,~\mbox{as}~n\rightarrow\infty,
	\label{eq:z_tx_t_average2}
	\end{equation}
	and similar arguments show that, almost surely,
	\begin{equation}
	\frac{\sum_{t=1}^n\bz_tx_{t-1}}{n}\rightarrow \bzero,~\mbox{as}~n\rightarrow\infty.
	\label{eq:z_tx_t_average3}
	\end{equation}
	
	We now calculate the limit of $\sum_{t=1}^nx_tx_{t-1}/n$, as $n\rightarrow\infty$. By (\ref{eq:cov_lag1}), 
	\begin{equation}
	\underset{n\rightarrow\infty}{\lim}~\frac{\sum_{t=1}^nx_tx_{t-1}}{n}=\underset{n\rightarrow\infty}{\lim}~\frac{\rho_0\sum_{t=1}^nx^2_{t-1}}{n}
	+\underset{n\rightarrow\infty}{\lim}~\frac{\bbeta^\prime_0\sum_{t=1}^n\bz_tx_{t-1}}{n}
	+\underset{n\rightarrow\infty}{\lim}~\frac{\sum_{t=1}^n\epsilon_tx_{t-1}}{n}.
	\label{eq:cov_lag1_limit}
	\end{equation}
	By (\ref{eq:xt_sq_average3}), the first term on the right hand side of (\ref{eq:cov_lag1_limit}) is given, almost surely, by 
	$\frac{\rho_0\sigma^2_0}{1-\rho^2_0}+\frac{\rho_0\bbeta^\prime_0\bSigma_z\bbeta_0}{1-\rho^2_0}$, and the second term is almost surely zero due to (\ref{eq:z_tx_t_average3}).
	For the third term, note that $\epsilon_tx_{t-1}=\epsilon_t\varrho_{t-1}+\epsilon_t\tilde\epsilon_{t-1}$. Both $\epsilon_t\varrho_{t-1}$; $t=1,2,\ldots$
	and $\epsilon_t\tilde\epsilon_{t-1}$; $t=1,2,\ldots$, are sample paths of irreducible and aperiodic Markov chains having stationary distributions with mean zero.
	Hence, by the ergodic theorem, the third term of (\ref{eq:cov_lag1_limit}) is zero, almost surely. That is,
	\begin{equation}
	\underset{n\rightarrow\infty}{\lim}~\frac{\sum_{t=1}^nx_tx_{t-1}}{n}=\frac{\rho_0\sigma^2_0}{1-\rho^2_0}+\frac{\rho_0\bbeta^\prime_0\bSigma_z\bbeta_0}{1-\rho^2_0}.
	\label{eq:cov_lag1_limit2}
	\end{equation}

	The limits (\ref{eq:xt_sq_average2}), (\ref{eq:xt_sq_average3}), (\ref{eq:z_tx_t_average2}), (\ref{eq:z_tx_t_average3}), (\ref{eq:cov_lag1_limit2})
	applied to $\log R_n(\btheta)$ given by  Theorem \ref{th:eqpartition}, shows that $\frac{\log R_n(\theta)}{n}$ converges to $-h(\theta)$ almost surely as $n\rightarrow\infty$.
	In other words, \ref{s3} holds.
	
	Now $\frac{1}{n}\log R_n(\btheta)$ has continuous partial derivatives implying that $\frac{\partial}{\partial\btheta} \left[\frac{1}{n}\log R_n(\btheta)\right] $ is bounded in any compact set. Hence $\frac{1}{n}\log R_n(\btheta)$ is Lipschitz continuous and hence stochastic equicontinuous in $\btheta$. Thus by applying the stochastic Ascoli theorem we have that the convergence is uniform over $\btheta$ in that compact set (for details about stochastic equicontinuity, see, for example, \ctn{billingsley2013}).
\end{proof}
The meaning of Theorem \ref{th:eqpartition} is that, relative to the true distribution, the
likelihood of each $\btheta$ goes to zero exponentially, the rate being the Kullback-Leibler divergence rate. Roughly speaking, an integral of exponentially-shrinking quantities will tend to be dominated by the integrand with the slowest rate of decay. Lemma \ref{th:KL_ar} and Theorem \ref{th:eqpartition} imply that \ref{shalizi1}-\ref{s3} hold. For any $\btheta\in\bTheta$, $h(\btheta)$ is finite, which implies that $\ref{s4}$ also holds. As regards \ref{s5}, we can always make \eqref{eq:S5_1} to hold by considering $\mathcal G_n$s as credible regions of the prior distribution and these can be chosen increasing compact sets without loss of generality. Since $h(\cdot)$ is continuous in $\btheta$ the second and third parts of $\ref{s5}$ will also hold.

Note that the maximizer of $R_n(\btheta)$ is the maximum likelihood estimator (mle) of $\btheta$. Let $\hat{\btheta}_n=\underset{\btheta\in\mathcal G_n}{\sup}~ R_n(\btheta)$. Then 
\begin{equation}
\frac{1}{n}\log\int_{\mathcal G_n}R_n(\btheta)\pi(\btheta)d\btheta
\leq \frac{1}{n}\log\left[ R_n(\hat{\btheta}_n)\pi(\mathcal G_n) \right].
\label{eq:s6_mle}
\end{equation}
If we can show that $\hat{\btheta}_n$ is a consistent estimator of $\thetao$, 
then this will validate \ref{s6}. Importantly, the conditions for mle consistency generally require $iid$ observations \ctp{Lehmann98}. In this model the data sequence $\{x_t\}_{t=1}^\infty$ have dependence structure and regular asymptotic theory will not hold. 
Hence, we provide a direct proof of consistency; below we provide the main results leading to the desired consistency result. The
equipartition property plays a crucial role in the proceeding.



\begin{theorem}
	\label{th:concave}
	The function $\frac{1}{n}\log R_n(\btheta)$ is asymptotically concave in $\btheta$.
\end{theorem}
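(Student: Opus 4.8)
The plan is first to reduce the claim to a statement about the Gaussian log-likelihood, and then to read off concavity near $\thetao$ from the limiting Hessian. Since $\frac1n\log R_n(\btheta)=\frac1n\log f_{\btheta}(\bX_n)-\frac1n\log p(\bX_n)$ and the subtracted term is free of $\btheta$, it suffices to prove asymptotic concavity of $\frac1n\log f_{\btheta}(\bX_n)$. Under the postulated model \eqref{eq:modeled_ar1},
\[
\frac1n\log f_{\btheta}(\bX_n)=-\tfrac12\log(2\pi)-\log\sigma-\frac{Q_n(\rho,\bbeta)}{2\sigma^2},\qquad Q_n(\rho,\bbeta)=\frac1n\sum_{t=1}^n\bigl(x_t-\rho x_{t-1}-\bz_t'\bbeta\bigr)^2,
\]
and, writing $\tilde\bz_t=(x_{t-1},z_{0t},\ldots,z_{mt})'$ and $\bw=(\rho,\bbeta)'$, we have $Q_n(\bw)=\bw'A_n\bw-2\bb_n'\bw+c_n$ with $A_n=n^{-1}\sum_t\tilde\bz_t\tilde\bz_t'$, $\bb_n=n^{-1}\sum_t x_t\tilde\bz_t$, $c_n=n^{-1}\sum_t x_t^2$. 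The point to retain is that $A_n\succeq0$ and $Q_n\ge0$ for every $n$, being averages of squares.

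The main step is to identify the limiting quadratic form. Using the geometrically decaying moving-average representation $x_t=\sum_{j\ge0}\rho_0^{\,j}(\bz_{t-j}'\bbeta_0+\epsilon_{t-j})$ together with \ref{ar1}, \ref{ar2}, and a strong law of large numbers applied to the innovation cross-terms $n^{-1}\sum_t\bz_t\epsilon_{t-k}$ and $n^{-1}\sum_t\epsilon_t\epsilon_{t-k}$ — this is where the equipartition property of Theorem \ref{th:eqpartition} enters — I would show that, almost surely,
\[
\frac1n\sum_t x_t^2\to\gamma_0,\qquad \frac1n\sum_t x_tx_{t-1}\to\rho_0\gamma_0,\qquad \frac1n\sum_t\bz_tx_{t-1}\to\bzero,\qquad \frac1n\sum_t\bz_tx_t\to\bSigma_z\bbeta_0,
\]
where $\gamma_0=(\sigma_0^2+\bbeta_0'\bSigma_z\bbeta_0)/(1-\rho_0^2)$, while $n^{-1}\sum_t\bz_t\bz_t'\to\bSigma_z$ and $n^{-1}\sum_t\bz_t\to\bzero$ hold by \ref{ar1}. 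Hence $A_n\to\mathrm{diag}(\gamma_0,\bSigma_z)$, $\bb_n\to(\rho_0\gamma_0,(\bSigma_z\bbeta_0)')'$ and $c_n\to\gamma_0$, so $Q_n(\bw)$ converges, uniformly on compact $\bw$-sets and together with its gradient and Hessian, to $\bar Q(\bw)=\gamma_0(\rho-\rho_0)^2+(\bbeta-\bbeta_0)'\bSigma_z(\bbeta-\bbeta_0)+\sigma_0^2$. Since the Hessian of $\frac1n\log f_{\btheta}(\bX_n)$ in $\btheta=(\rho,\bbeta,\sigma)$ is a continuous function of $(\rho,\bbeta,\sigma)$ and of the above empirical moments, it converges uniformly on compact subsets of $\bTheta$ to the Hessian of $\bar\ell(\btheta)=-\tfrac12\log(2\pi)-\log\sigma-\bar Q(\rho,\bbeta)/(2\sigma^2)$; by Lemma \ref{th:KL_ar} and Theorem \ref{th:eqpartition}, $\bar\ell$ differs from $-h(\btheta)$ only by a $\btheta$-free constant. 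Establishing these almost sure moment limits rigorously — they are not i.i.d.\ averages — is the part I expect to require the most care, handling the infinite moving-average sum by dominated convergence in the lag index and controlling the $\{\epsilon_t\}$ cross-terms by a martingale strong law.

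It then remains to check the limiting Hessian at the true value. A direct computation gives that the Hessian of $\bar\ell$ at $\thetao=(\rho_0,\bbeta_0,\sigma_0)$ is block diagonal, equal to $\mathrm{diag}\!\bigl(-\gamma_0/\sigma_0^2,\ -\bSigma_z/\sigma_0^2,\ -2/\sigma_0^2\bigr)$, which is negative definite because $\bSigma_z\succ0$, $\gamma_0\ge\sigma_0^2/(1-\rho_0^2)>0$ and $\sigma_0>0$. By continuity there is a compact neighbourhood $N$ of $\thetao$ on which this limiting Hessian is negative semidefinite, so by the uniform convergence above, almost surely for all sufficiently large $n$ the Hessian of $\frac1n\log R_n(\btheta)$ is negative semidefinite throughout $N$; equivalently, $\btheta\mapsto\frac1n\log R_n(\btheta)$ is concave on $N$ for large $n$, which is the asserted asymptotic concavity — and it is precisely what one needs, together with Theorem \ref{th:eqpartition}, to pin down the maximum likelihood estimator at $\thetao$ in the sequel. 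As a remark, one may dispense with the ``asymptotic'' qualifier by reparametrising $\sigma=e^{\lambda}$: then $\frac1n\log f_{\btheta}(\bX_n)=-\tfrac12\log(2\pi)-\lambda-\tfrac12 Q_n(\rho,\bbeta)e^{-2\lambda}$, and a Schur-complement argument using only $A_n\succeq0$ and $Q_n\ge0$ shows this is jointly concave in $(\rho,\bbeta,\lambda)$ for every $n$.
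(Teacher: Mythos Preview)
Your approach is correct and ultimately rests on the same moment limits, but organises the argument differently from the paper. The paper first profiles out $\sigma^2$: maximising $\frac{1}{n}\log R_n(\btheta)$ over $\sigma^2$ gives the profile $-\log g_n(\rho,\bbeta)-\tfrac12$ with $g_n(\rho,\bbeta)=n^{-1}\sum_t(x_t-\rho x_{t-1}-\bbeta'\bz_t)^2$, and it then simply observes that the Hessian of $g_n$ in $(\rho,\bbeta)$ is the Gram matrix $A_n$ in your notation, which by the limits \eqref{eq:xt_sq_average3} and \eqref{eq:z_tx_t_average3} converges to $\mathrm{diag}(\gamma_0,\bSigma_z)$ and is therefore positive definite for large $n$. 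That is the whole proof --- no limiting Hessian in the full $\btheta$, no local-to-global continuity step. The trade-off is that the paper's closing sentence, that convexity of $g_n$ in $(\rho,\bbeta)$ ``implies'' concavity of $\frac{1}{n}\log R_n$ in the full $\btheta=(\rho,\bbeta,\sigma)$, is a leap ($-\log$ of a positive convex function need not be concave, and the $\sigma$ direction is not addressed at all); your treatment of the $\sigma$ coordinate is more honest, and your reparametrisation remark $\sigma=e^{\lambda}$ actually yields genuine joint concavity for every $n$, which is stronger than what the paper delivers. For the downstream application (Theorem \ref{th:uniroot}) either argument suffices, since only uniqueness of the maximiser near $\thetao$ is used.
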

\begin{proof}
	Note that 
	\begin{equation*}
	\sup_{\btheta\in\bTheta} \frac{1}{n}\log R_n(\btheta) = \sup_{\rho,\bbeta} \sup_{\sigma^2}\frac{1}{n}\log R_n(\btheta)= -\inf_{\rho,\bbeta}~\log\left[   \frac{1}{n} \sum_{t=1}^{n} \left(x_t -\rho x_{t-1}-\bbeta'\bz_t \right)^2\right]-\frac{1}{2}.
	\end{equation*}
	Since $\log$ is a monotonic function, minimizing $\log\left[\frac{1}{n} \sum_{t=1}^{n} \left(x_t -\rho x_{t-1}-\bbeta'\bz_t \right)^2\right]$ is equivalent to minimizing $\frac{1}{n} \sum_{t=1}^{n} \left(x_t -\rho x_{t-1}-\bbeta'\bz_t \right)^2=g_n(\rho,\bbeta),$ say. Now the Jacobian matrix $J$ of $g_n(\rho,\bbeta)$ is given by 
	\begin{equation*}
	J= \begin{bmatrix}
	\frac{1}{n}\sum x^2_{t-1} & \frac{1}{n}\sum x_{t-1}\bz'_t\\
	\frac{1}{n}\sum x_{t-1}\bz_t & \frac{1}{n}\sum \bz_t\bz'_t
	\end{bmatrix}.
	\end{equation*}
	\eqref{eq:xt_sq_average2}, \eqref{eq:z_tx_t_average3} together with the model assumptions \ref{ar1}-\ref{ar2} clearly shows that for large enough $n$, $J$ is positive-definite. Hence $g_n(\rho,\bbeta)$ is convex implying that $\frac{1}{n}\log R_n(\btheta)$ is a concave function for large $n$.
\end{proof}

The above theorem ensures that for large enough $n$, the likelihood equation have unique mle. Rest we need to ensure the strong consistency of the mle for this dependent setup.

\begin{theorem}
	\label{th:uniroot}
	Given any $\eta>0$, the log-likelihood ratio $\frac{1}{n}\log R_n(\btheta)$ has its unique root in the $\eta$-neighbourhood of $\thetao$ almost surely for large $n$.
\end{theorem}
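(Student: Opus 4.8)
The plan is to combine three ingredients already in hand: the uniform almost sure convergence $\tfrac1n\log R_n(\btheta)\to -h(\btheta)$ on compacta (Theorem~\ref{th:eqpartition}), the explicit form of $h$ from Lemma~\ref{th:KL_ar}, and the asymptotic concavity of $\tfrac1n\log R_n$ (Theorem~\ref{th:concave}). Throughout, ``root'' is read as a root of the likelihood (score) equation $\nabla_{\btheta}\bigl(\tfrac1n\log R_n(\btheta)\bigr)=\bzero$, i.e.\ a critical point. First I would extract the global structure of $h$: substituting $\rho=\rho_0,\ \bbeta=\bbeta_0,\ \sigma=\sigma_0$ into \eqref{eq:htheta} makes every bracketed difference vanish, so $h(\thetao)=0$; being a Kullback--Leibler divergence rate, $h\ge 0$ everywhere, and $h(\btheta)=0$ forces the two $AR(1)$ laws \eqref{eq:true_ar1}--\eqref{eq:modeled_ar1} to agree, hence $\btheta=\thetao$ by identifiability (which holds since $\bSigma_z\succ 0$ in \ref{ar1}). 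Thus $\thetao$ is the unique global minimiser of $h$, it is interior to $\bTheta$ by \ref{ar3}, and differentiating \eqref{eq:htheta} twice shows $\nabla^2 h(\thetao)$ is positive definite, so the minimum is strict and non-degenerate.

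Next I would localise the maximiser. Fix $\eta>0$; it suffices to treat small $\eta$, since a root shown to lie in a small ball lies in every larger one. Choosing $\eta$ small enough that $\bar B_\eta=\{\btheta:\|\btheta-\thetao\|\le\eta\}\subset\bTheta$, the sphere $S_\eta=\partial \bar B_\eta$ is compact, and by continuity of $h$ together with the previous paragraph $\delta:=\inf_{S_\eta}h>0$. Applying Theorem~\ref{th:eqpartition} on $\bar B_\eta$ yields, almost surely, a random $n_0$ such that for all $n\ge n_0$,
\[
\tfrac1n\log R_n(\thetao)>-\tfrac{\delta}{3}>-\tfrac{2\delta}{3}>\sup_{\btheta\in S_\eta}\tfrac1n\log R_n(\btheta).
\]
For such $n$, $\tfrac1n\log R_n$ is concave on the convex parameter space $\bTheta$ (Theorem~\ref{th:concave}); given any $\btheta^\ast\in\bTheta$ with $\|\btheta^\ast-\thetao\|>\eta$, the segment $[\thetao,\btheta^\ast]$ meets $S_\eta$ at a point that is a proper convex combination of its endpoints, so concavity gives the value of $\tfrac1n\log R_n$ there $\ge\min\{\tfrac1n\log R_n(\thetao),\tfrac1n\log R_n(\btheta^\ast)\}$; the displayed inequality then forces $\tfrac1n\log R_n(\btheta^\ast)<\tfrac1n\log R_n(\thetao)$. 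Hence no maximiser, and a fortiori no critical point of the concave function $\tfrac1n\log R_n$, lies on $S_\eta$ or outside $\bar B_\eta$: every root of the likelihood equation lies in the open $\eta$-ball around $\thetao$.

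It remains to see that the root is unique. Since $-\nabla^2 h(\thetao)$ is negative definite, for $n$ large $\tfrac1n\log R_n$ is strictly concave on a neighbourhood of $\thetao$ which, by the previous step, contains all of its critical points; therefore its maximiser is unique, and being the unique zero of the gradient of a concave function it is the unique root of the likelihood equation. This root is the MLE $\hat\btheta_n$, it lies within $\eta$ of $\thetao$, and since $\eta>0$ was arbitrary, $\hat\btheta_n\to\thetao$ almost surely --- which is exactly what validates \eqref{eq:s6_mle} and hence assumption \ref{s6}.

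The main obstacle is the non-compactness of $\bTheta$: Theorem~\ref{th:eqpartition} supplies uniform convergence only on compacta, so one cannot directly conclude that the maximiser of $\tfrac1n\log R_n$ stays near $\thetao$ rather than drifting toward $\sigma\to 0$, $\sigma\to\infty$, or $\|\bbeta\|\to\infty$. I sidestep any tail control on the likelihood by invoking asymptotic concavity (Theorem~\ref{th:concave}) as above: once $\tfrac1n\log R_n$ falls below its value at $\thetao$ on an entire small sphere, concavity rules out every exterior critical point. The only other delicate point is promoting ``no local root outside $\bar B_\eta$'' to genuine global uniqueness of the root, which is precisely where the non-degeneracy of $\nabla^2 h(\thetao)$ --- strict concavity of $\tfrac1n\log R_n$ near $\thetao$ --- enters.
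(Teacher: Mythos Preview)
Your proof is correct and follows essentially the same approach as the paper: both combine the uniform convergence from Theorem~\ref{th:eqpartition}, the fact that $h$ vanishes uniquely at $\thetao$, and the concavity of Theorem~\ref{th:concave} to trap the maximiser inside the $\eta$-ball. The paper is terser---it simply observes that the maximum over $\overline N_\eta(\thetao)$ cannot occur on the boundary and then invokes Theorem~\ref{th:concave} for uniqueness---whereas you make explicit the segment argument that rules out exterior critical points; this is a genuine clarification, since the paper relies implicitly on ``local max of a concave function is global''. One minor point: your uniqueness step via $\nabla^2 h(\thetao)$ is a detour, because uniform convergence of $\tfrac{1}{n}\log R_n$ to $-h$ does not by itself yield convergence of Hessians; however this is harmless, since the proof of Theorem~\ref{th:concave} already shows the Hessian of $g_n$ is positive definite for large $n$, giving the strict concavity you need directly.
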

\begin{proof}
	\ref{ar3} ensures that $\thetao$ is an interior point in $\bTheta$, implying that there exists a compact set $G\subset\bTheta$ such that $\thetao$ is an interior point of $G$ also. 
	From Theorem \ref{th:eqpartition}, for each $\btheta \in \bTheta$, we have
	\begin{equation}
	\lim_{n\rightarrow\infty} \frac{1}{n}\log R_n(\btheta) = -h(\btheta),
	\label{eq:equi}
	\end{equation}
	and the convergence in \eqref{eq:equi} is uniform over $\btheta$ in $G$. Thus,
	
	\begin{equation}
	\lim_{n\rightarrow\infty}\sup_{\btheta\in G}~\abs{ \frac{1}{n}\log R_n(\btheta)  + h(\btheta)}=0.
	\label{eq:unifG}
	\end{equation}
	For any $\eta>0$, we define 
	\[
	N_\eta(\btheta_0)=\{\btheta:\norm{\btheta_0-\btheta}<\eta\};~N'_\eta(\btheta_0)=\{\btheta:\norm{\btheta_0-\btheta}=\eta\};~\overline{N}_\eta(\thetao)=\{\btheta:\norm{\btheta_0-\btheta}\leq\eta\}.
	\]
	Note that for sufficiently small $\eta$, $\overline{N}_\eta(\thetao)\subset G$. Let $H=\underset{\btheta\in N'_\eta(\btheta_0)}{\inf}~h(\btheta)$. 
	By the properties of KL-divergence $h(\btheta)$ is minimum at $\btheta=\btheta_0$ and therefore, $H>0$. Let us fix an $\varepsilon$ such that $0<\varepsilon<H$. Then by \eqref{eq:unifG}, for large enough $n$ all $\btheta\in N'_\eta(\thetao)$,
	$\frac{1}{n}\log R_n(\btheta)  <- h(\btheta)+\varepsilon<0$. Now by definition $\frac{1}{n}\log R_n(\thetao)=0$ and thus for all $\btheta\in N'_\eta(\thetao)$
	\begin{equation}
	\frac{1}{n}\log R_n(\btheta)<\frac{1}{n}\log R_n(\thetao)
	\label{eq:boundary}
	\end{equation}
	for large enough $n$.
	Now, $\overline{N}_\eta(\thetao)$ is a compact set with $N'_\eta(\btheta_0)$ being its boundary. Since $\frac{1}{n}\log R_n(\btheta)$ is continuous in $\btheta$, it is bounded in $\overline{N}_\eta(\thetao)$. From \eqref{eq:boundary} we see that the maximum is attained at some interior point of $\overline{N}_\eta(\thetao)$ and not on the boundary. Since the supremum is attained at an interior point of $\overline{N}_\eta(\thetao)$, the supremum is also a local maximum. Now, Theorem \ref{th:concave} ensures that for large $n$ the maximizer of $\frac{1}{n}\log R_n(\btheta)$ is unique. This proves the result.
\end{proof}

Theorem \ref{th:uniroot} essentially entails the strong consistency of the mle. This also leads to the verification of \ref{s6} required for posterior consistency. We formally state it in the following lemma.

\begin{lemma}
	\label{lemma:s6_veify}
	For any proper prior distribution $\pi(\cdot)$ over the parameter space $\bTheta$, we have
	\begin{equation*}
	\limsup_{n\rightarrow\infty}\frac{1}{n}\log\int_{\mathcal G_n}R_n(\btheta)\pi(\btheta)d\btheta \leq 0.
	\end{equation*}
\end{lemma}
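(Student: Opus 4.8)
The plan is to bound the prior-weighted integral of the likelihood ratio over $\mathcal{G}_n$ by its maximal value there, i.e.\ by the maximised likelihood ratio, and then to show that this maximum cannot grow exponentially fast because the maximum likelihood estimator is strongly consistent for $\thetao$, at which point the Kullback--Leibler rate vanishes. Concretely, starting from the already-established inequality \eqref{eq:s6_mle},
\[
\frac{1}{n}\log\int_{\mathcal{G}_n}R_n(\btheta)\pi(\btheta)\,d\btheta
\;\le\;\frac{1}{n}\log R_n(\hat{\btheta}_n)+\frac{1}{n}\log\pi(\mathcal{G}_n),
\]
and since $\pi$ is a probability measure we have $\pi(\mathcal{G}_n)\le 1$, so the second term is $\le 0$ for every $n$. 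It therefore remains only to verify that $\limsup_{n\to\infty}\frac{1}{n}\log R_n(\hat{\btheta}_n)\le 0$.

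For this I would first record the strong consistency of the mle $\hat{\btheta}_n$. Theorem \ref{th:concave} guarantees that, for all large $n$, $\btheta\mapsto\frac{1}{n}\log R_n(\btheta)$ is concave on $\bTheta$, so its unique critical point---which exists and lies, for every $\eta>0$, eventually inside the $\eta$-ball about $\thetao$ by Theorem \ref{th:uniroot} (and $\thetao$ is an interior point of $\bTheta$ by \ref{ar3})---is the global maximiser. Since the sets $\mathcal{G}_n$ are increasing and exhaust $\bTheta$, for large $n$ this ball lies inside $\mathcal{G}_n$, so the maximiser of $R_n$ over $\mathcal{G}_n$ is precisely that global maximiser, whence $\hat{\btheta}_n\to\thetao$ almost surely.

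Next I would invoke the uniform equipartition convergence of Theorem \ref{th:eqpartition}: $\frac{1}{n}\log R_n(\btheta)\to -h(\btheta)$ uniformly over any compact subset of $\bTheta$. Fixing a compact neighbourhood $K$ of $\thetao$, for large $n$ we have $\hat{\btheta}_n\in K$, and hence
\[
\Big|\tfrac{1}{n}\log R_n(\hat{\btheta}_n)+h(\hat{\btheta}_n)\Big|
\;\le\;\sup_{\btheta\in K}\Big|\tfrac{1}{n}\log R_n(\btheta)+h(\btheta)\Big|\;\longrightarrow\;0 .
\]
The closed form for $h$ in Lemma \ref{th:KL_ar} is continuous in $\btheta$, and substituting $\btheta=\thetao$ into \eqref{eq:htheta} gives $h(\thetao)=0$ (equivalently, $f_{\thetao}=p$), so $h(\hat{\btheta}_n)\to 0$ and therefore $\frac{1}{n}\log R_n(\hat{\btheta}_n)\to 0$. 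Combined with the first display, this yields $\limsup_{n\to\infty}\frac{1}{n}\log\int_{\mathcal{G}_n}R_n(\btheta)\pi(\btheta)\,d\btheta\le 0$.

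The substantive work is carried entirely by Theorems \ref{th:concave} and \ref{th:uniroot}---establishing asymptotic concavity of the log-likelihood ratio and strong consistency of the mle for the dependent $AR(1)$ likelihood with time-varying covariates, a setting in which the classical i.i.d.\ consistency machinery does not apply. Granting those, the present lemma is a short assembly of \eqref{eq:s6_mle}, properness of $\pi$, the uniform equipartition limit, and the identity $h(\thetao)=0$, and I do not anticipate any further obstacle here.
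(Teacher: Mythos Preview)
Your proposal is correct and follows essentially the same route as the paper: bound the integral by the maximised likelihood ratio via \eqref{eq:s6_mle}, use $\pi(\mathcal G_n)\le 1$, and deduce $\frac{1}{n}\log R_n(\hat\btheta_n)\to 0$ from the strong consistency of $\hat\btheta_n$ (Theorems \ref{th:concave} and \ref{th:uniroot}) together with the uniform equipartition limit (Theorem \ref{th:eqpartition}) and $h(\thetao)=0$. The paper's own proof is a two-line version of exactly this argument; your write-up simply spells out the intermediate steps (in particular, why $\hat\btheta_n$ eventually lies in $\mathcal G_n$ and why uniform convergence on a compact neighbourhood plus continuity of $h$ yields the limit) that the paper leaves implicit.
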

\begin{proof}
	From Theorems \ref{th:eqpartition} and \ref{th:uniroot} we have
	\begin{equation*}
	\lim_{n\rightarrow\infty} \frac{1}{n}\log R_n(\hat{\btheta}_n)=0,
	\end{equation*}
	and hence
	\begin{equation*}
	\limsup_{n\rightarrow\infty}\frac{1}{n}\log\int_{\mathcal G_n}R_n(\btheta)\pi(\btheta)d\btheta \leq \lim_{n\rightarrow\infty} \frac{1}{n}\left[ \log R_n(\hat{\btheta}_n) + \log\pi(\mathcal{G}_n) \right] \leq 0.
	\end{equation*}
\end{proof}

Lemma \ref{lemma:s6_veify} signifies that \ref{s6} holds. About \ref{shalizi7}, it trivially holds since $h(\cdot)$ is a continuous function.

\section{Supplementary to real data analysis}
	\begin{longtable}{|c|p{14cm}|}
		\caption{Causal SNPs for different populations}
		\label{variability_impl_mech}
		\endfirsthead
		\endhead
		\hline 
		\pbox{1cm}{Popu-\\lation}& Causal SNP\\\hline
		1& {m1, m12, m13, m114, m135, m146, m147, m236, m249, m274, m275, m276, m407, m422, m449, m537, m620, m665, m674, m680, m709, m765, m887, m894, m895, m899, m934, m951, m955, m1076, m1161, m1234, m1249, m1291, m1328, m1412, m1436, m1437, m1445, m1456, m1575, m1646, m1733, m1761, m1762, m1763, m1764, m1765, m1766, m1767, m1768, m1946, m2043, m2093, m2169, m2174, m2175, m2205, m2287, m2348, m2349, m2374, m2403, m2451, m2452, m2467, m2468, m2508, m2610, m2677, m2678, m2679, m2680, m2681, m2682, m2687, m2688, m2689, m2692, m2817, m2906, m2907, m2943, m2951, m2952, m2953, m2954, m2955, m2956, m2962, m2996, m2997, m3106, m3279, m3280, m3281, m3282, m3283, m3358, m3418, m3457, m3489, m3490, m3491, m3545, m3571, m3644, m3735, m3738, m3795, m3931, m3951, m3952, m4015, m4038, m4144, m4188, m4281, m4297, m4372, m4373, m4374, m4375, m4499, m4500, m4504, m4506, m4538, m4674, m4766, m4767, m4768, m4919, m4924, m4925, m4973, m4974, m5041, m5149, m5199, m5228, m5318, m5352, m5353, m5411, m5437, m5505, m5515, m5516, m5517, m5646, m5688, m5728, m5766, m5926, m5927, m6025, m6066, m6116, m6117, m6158, m6159, m6160, m6161, m6296, m6359, m6365, m6394, m6395, m6396, m6397, m6398, m6399, m6400, m6401, m6402, m6434, m6466, m6473, m6505, m6507, m6573, m6574, m6599, m6617, m6723, m6757, m6765, m6766, m6816, m6817, m6818, m6851, m6852, m6853, m6858, m6859, m6860, m6872, m6903, m6995, m7085, m7089, m7156, m7202, m7253, m7325, m7338, m7348}  \\ 	\hline 
		2&{m1, m147, m432, m440, m458, m589, m597, m598, m599, m600, m741, m1010, m1011, m1039, m1046, m1047, m1048, m1049, m1050, m1051, m1052, m1053, m1120, m1350, m1362, m1620, m1670, m1812, m2014, m2027, m2028, m2143, m2144, m2200, m2201, m2203, m2213, m2295, m2421, m2439, m2521, m2569, m2573, m2586, m2795, m2797, m3216, m3412, m3560, m3615, m3727, m3728, m3729, m3730, m3956, m4141, m4273, m4328, m4421, m4453, m4454, m4510, m4742, m4776, m4777, m4809, m4826, m4827, m4828, m4988, m5229, m5375, m5542, m5544, m5590, m5674, m5803, m5804, m5805, m5885, m5886, m5887, m5936, m5997, m6004, m6016, m6017, m6018, m6019, m6312, m6320, m6342, m6343, m6457, m6485, m6486, m6492, m6493, m6652, m7178, m7189, m7220, m7269, m7270}\\ \hline 
		3&{m1, m113, m162, m172, m176, m196, m198, m443, m446, m538, m678, m777, m796, m896, m917, m945, m946, m947, m1042, m1238, m1318, m1324, m1325, m1468, m1740, m1905, m1906, m2039, m2062, m2148, m2162, m2163, m2197, m2202, m2265, m2409, m2435, m2502, m2627, m2628, m2629, m2691, m2881, m2988, m3414, m3415, m3438, m3439, m3440, m3441, m3442, m3443, m3446, m3447, m3448, m3484, m3543, m3544, m3811, m3848, m3849, m3850, m3851, m4297, m4318, m4425, m4493, m4578, m5030, m5031, m5094, m5223, m5379, m5380, m5448, m5685, m5706, m5799, m5808, m5911, m5931, m6095, m6096, m6395, m6396, m6397, m6401, m6434, m6473, m6481, m6527, m6729, m7059, m7066}\\\hline
		4&{m1, m61, m71, m72, m143, m144, m181, m182, m183, m225, m294, m295, m555, m707, m799, m805, m816, m937, m1020, m1156, m1262, m1312, m1316, m1318, m1323, m1335, m1377, m1408, m1448, m1483, m1484, m1485, m1486, m1487, m1488, m1586, m1714, m1715, m1716, m1717, m1776, m1822, m1840, m1909, m1946, m1947, m1948, m1949, m2013, m2120, m2121, m2129, m2130, m2131, m2139, m2179, m2191, m2227, m2228, m2263, m2286, m2413, m2433, m2434, m2481, m2507, m2621, m2785, m2816, m2832, m2895, m2902, m2922, m2923, m2937, m2964, m2965, m3168, m3170, m3211, m3290, m3312, m3378, m3412, m3493, m3495, m3496, m3604, m3614, m3680, m3819, m3820, m3837, m3838, m3839, m3840, m3978, m4069, m4150, m4165, m4166, m4216, m4217, m4218, m4219, m4220, m4221, m4222, m4234, m4235, m4351, m4374, m4375, m4377, m4378, m4504, m4580, m4581, m4582, m4583, m4584, m4585, m4586, m4587, m4588, m4612, m4637, m4648, m4692, m4712, m4713, m4714, m4715, m4776, m4818, m4833, m4918, m5016, m5079, m5152, m5153, m5154, m5233, m5234, m5237, m5326, m5379, m5453, m5454, m5455, m5456, m5457, m5747, m5789, m5794, m5825, m5854, m5859, m5893, m5894, m5904, m5923, m5948, m6084, m6152, m6153, m6154, m6155, m6431, m6432, m6438, m6445, m6502, m6503, m6509, m6560, m6756, m6853, m6869, m6870, m6897, m6898, m6899, m6900, m6967, m7071, m7072, m7257, m7267, m7268, m7272}\\\hline
		5&m1, m96, m329, m363, m431, m904, m951, m1406, m1893, m2150, m2357, m2359, m2360, m2463, m2547, m2551, m2570, m2621, m2622, m2623, m3287, m3983, m3984, m3985, m4822, m5168, m5186, m5222, m5223, m5404, m5405, m5416, m5425, m5699, m5706, m5880, m5881, m5914, m5925, m5926, m5927, m5928, m5929, m5930, m5931, m6408, m6440, m6494, m6533, m6538, m6562, m7067, m7076, m7078, m7080, m7081, m7082, m7083, m7091, m7092, m7093, m7188, m7223, m7224, m7227, m7248, m7249\\\hline
	\end{longtable}

\label{sec:supp_realdata}
\begin{figure}
	\centering
	\includegraphics[width=1\linewidth]{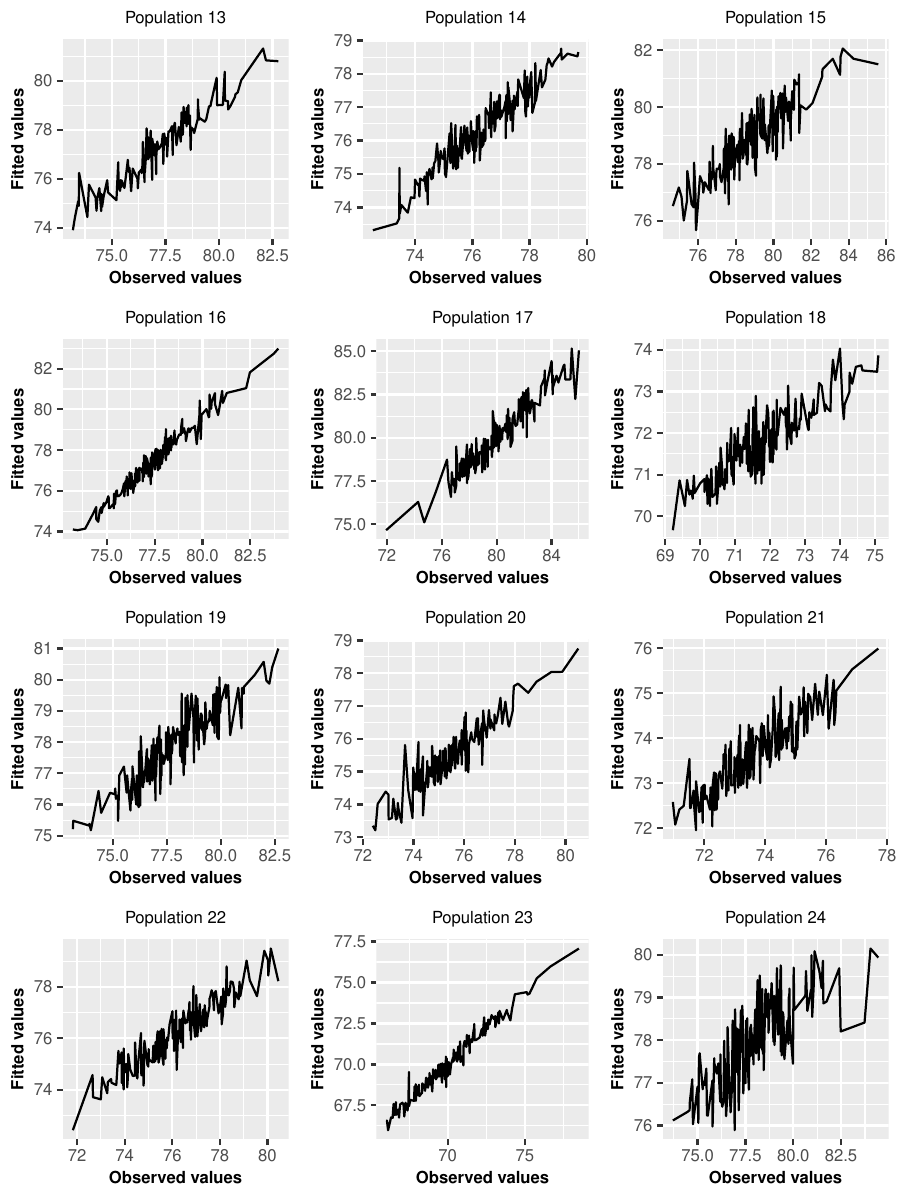}
	\caption{Fitted values versus observed values.}
	\label{fig:real_data_supp}	
\end{figure}

\bibliographystyle{natbib}
\bibliography{irmcmc}

\begin{thebibliography}{}

\bibitem[Benjamini and Heller(2007)Benjamini and Heller]{benjamini2007}
Benjamini, Y. and Heller, R. (2007).
\newblock {False Discovery Rates for Spatial Signals}.
\newblock {\em Journal of the American Statistical Association\/}, {\bf
  102}(480), 1272--1281.

\bibitem[Benjamini and Hochberg(1995)Benjamini and Hochberg]{Benjamini95}
Benjamini, Y. and Hochberg, Y. (1995).
\newblock {Controlling the False Discovery Rate: A Practical and Powerful
  Approach to Multiple Testing}.
\newblock {\em Journal of the Royal Statistical Society. Series B
  (Methodological)\/}, {\bf 57}(1), 289--300.

\bibitem[Benjamini and Yekutieli(2001)Benjamini and Yekutieli]{by01}
Benjamini, Y. and Yekutieli, D. (2001).
\newblock The control of the false discovery rate in multiple testing under
  dependency.
\newblock {\em Ann. Statist.}, {\bf 29}(4), 1165--1188.

\bibitem[Berry and Hochberg(1999)Berry and Hochberg]{berry99}
Berry, D.~A. and Hochberg, Y. (1999).
\newblock Bayesian perspectives on multiple comparisons.
\newblock {\em Journal of Statistical Planning and Inference\/}, {\bf 82}(1),
  215--227.

\bibitem[Billingsley(2013)Billingsley]{billingsley2013}
Billingsley, P. (2013).
\newblock {\em Convergence of Probability Measures\/}.
\newblock John Wiley \& Sons.

\bibitem[Brown {\em et~al.}(2014)Brown, Lazar, Dutta, Jang, and
  McDowell]{Brown14}
Brown, A., Lazar, N.~A., Dutta, G.~S., Jang, W., and McDowell, J.~E. (2014).
\newblock {I}ncorporating {S}patial {D}ependence into {B}ayesian {M}ultiple
  {T}esting of {S}tatistical {P}arametric {M}aps in {F}unctional
  {N}euroimaging.
\newblock {\em NeuroImage\/}, {\bf 84}(1), 97--112.

\bibitem[Buckler {\em et~al.}(2009)Buckler, Holland, Bradbury, Acharya, Brown,
  Browne, Ersoz, {\em et~al.}]{Buckler09}
Buckler, E.~S., Holland, J.~B., Bradbury, P.~J., Acharya, C.~B., Brown, P.~J.,
  Browne, C., Ersoz, E., {\em et~al.} (2009).
\newblock The genetic architecture of maize flowering time.
\newblock {\em Science\/}, {\bf 325}(5941), 714--718.

\bibitem[Chandra and Bhattacharya(2019)Chandra and Bhattacharya]{chandra2017}
Chandra, N.~K. and Bhattacharya, S. (2019).
\newblock {Non-marginal decisions: A novel Bayesian multiple testing
  procedure}.
\newblock {\em Electron. J. Statist.}, {\bf 13}(1), 489--535.

\bibitem[Chandra and Bhattacharya(2020)Chandra and Bhattacharya]{Chandra20}
Chandra, N.~K. and Bhattacharya, S. (2020).
\newblock {H}igh-dimensional {A}symptotic {T}heory of {B}ayesian {M}ultiple
  {T}esting {P}rocedures {U}nder {G}eneral {D}ependent {S}etup and {P}ossible
  {M}isspecification.
\newblock {\em arXiv preprint\/}.

\bibitem[Chandra {\em et~al.}(2019)Chandra, Singh, and
  Bhattacharya]{chandra_mirna}
Chandra, N.~K., Singh, R., and Bhattacharya, S. (2019).
\newblock {A novel Bayesian multiple testing approach to deregulated miRNA
  discovery harnessing positional clustering}.
\newblock {\em Biometrics\/}, {\bf 75}(1), 202--209.

\bibitem[Efron(2007)Efron]{efron2007}
Efron, B. (2007).
\newblock {Correlation and Large-Scale Simultaneous Significance Testing}.
\newblock {\em Journal of the American Statistical Association\/}, {\bf
  102}(477), 93--103.

\bibitem[Fan {\em et~al.}(2012)Fan, Han, and Gu]{fan2012}
Fan, J., Han, X., and Gu, W. (2012).
\newblock {Estimating False Discovery Proportion Under Arbitrary Covariance
  Dependence}.
\newblock {\em Journal of the American Statistical Association\/}, {\bf
  107}(499), 1019--1035.
\newblock PMID: 24729644.

\bibitem[Finner and Roters(2002)Finner and Roters]{finner2002}
Finner, H. and Roters, M. (2002).
\newblock {Multiple hypotheses testing and expected number of type I. errors}.
\newblock {\em Ann. Statist.}, {\bf 30}(1), 220--238.

\bibitem[Finner {\em et~al.}(2007)Finner, Dickhaus, and Roters]{finner2007}
Finner, H., Dickhaus, T., and Roters, M. (2007).
\newblock {Dependency and false discovery rate: Asymptotics}.
\newblock {\em Ann. Statist.}, {\bf 35}(4), 1432--1455.

\bibitem[Finner {\em et~al.}(2009)Finner, Dickhaus, and Roters]{finner2009}
Finner, H., Dickhaus, T., and Roters, M. (2009).
\newblock On the false discovery rate and an asymptotically optimal rejection
  curve.
\newblock {\em Ann. Statist.}, {\bf 37}(2), 596--618.

\bibitem[Geman and Hwang(1982)Geman and Hwang]{geman1982}
Geman, S. and Hwang, C.-R. (1982).
\newblock Nonparametric maximum likelihood estimation by the method of sieves.
\newblock {\em Ann. Statist.}, {\bf 10}(2), 401--414.

\bibitem[Ghosal {\em et~al.}(2000)Ghosal, Ghosh, and van~der Vaart]{ghosal2000}
Ghosal, S., Ghosh, J.~K., and van~der Vaart, A.~W. (2000).
\newblock Convergence rates of posterior distributions.
\newblock {\em Ann. Statist.}, {\bf 28}(2), 500--531.

\bibitem[Ghosh {\em et~al.}(2006)Ghosh, Chen, and Raghunathan]{dghosh2006}
Ghosh, D., Chen, W., and Raghunathan, T. (2006).
\newblock The false discovery rate: a variable selection perspective.
\newblock {\em Journal of Statistical Planning and Inference\/}, {\bf 136}(8),
  2668 -- 2684.

\bibitem[Ishwaran and Rao(2005)Ishwaran and Rao]{ishwaran2005}
Ishwaran, H. and Rao, J.~S. (2005).
\newblock {Spike and slab variable selection: Frequentist and Bayesian
  strategies}.
\newblock {\em Ann. Statist.}, {\bf 33}(2), 730--773.

\bibitem[Jaccard(1901)Jaccard]{Jaccard01}
Jaccard, P. (1901).
\newblock {\'E}tude {C}omparative {de la} {D}istribution {F}lorale {dans une}
  {P}ortion {des} {A}lpes {et des} {J}ura.
\newblock {\em Bulletin {de la} {S}oci{\'e}t{\'e} Vaudoise {des} {S}ciences
  {N}aturelles\/}, {\bf 37}, 547--579.

\bibitem[Jaccard(1908)Jaccard]{Jaccard08}
Jaccard, P. (1908).
\newblock {N}ouvelles {R}echerches {sur la} {D}istribution {F}lorale.
\newblock {\em Bulletin {de la} {S}oci{\'e}t{\'e} Vaudoise {des} {S}ciences
  {N}aturelles\/}, {\bf 44}, 223--270.

\bibitem[Jaccard(1912)Jaccard]{Jaccard12}
Jaccard, P. (1912).
\newblock {T}he {D}istribution of the {F}lora in the {A}lpine {Z}one.
\newblock {\em New Phytologist\/}, {\bf 11}, 37--50.

\bibitem[Jensen {\em et~al.}(2009)Jensen, Erkan, Arnardottir, and
  Small]{Jensen09}
Jensen, S.~T., Erkan, I., Arnardottir, E.~S., and Small, D.~S. (2009).
\newblock {B}ayesian {T}esting of {M}any {H}ypotheses $\times$ {M}any {G}enes:
  {A} {S}tudy of {S}leep {A}pnea.
\newblock {\em The Annals of Applied Statistics\/}, {\bf 3}(3), 1080--1101.

\bibitem[Lehmann and Casella(1998)Lehmann and Casella]{Lehmann98}
Lehmann, E.~L. and Casella, G. (1998).
\newblock {\em {T}heory of {P}oint {E}stimation\/}.
\newblock Springer-Verlag, New York, Inc.

\bibitem[Liu {\em et~al.}(2016)Liu, Sarkar, and Zhao]{LIU2016}
Liu, Y., Sarkar, S.~K., and Zhao, Z. (2016).
\newblock A new approach to multiple testing of grouped hypotheses.
\newblock {\em Journal of Statistical Planning and Inference\/}, {\bf 179}, 1
  -- 14.

\bibitem[M{\"u}ller {\em et~al.}(2004)M{\"u}ller, Parmigiani, Robert, and
  Rousseau]{muller04}
M{\"u}ller, P., Parmigiani, G., Robert, C., and Rousseau, J. (2004).
\newblock Optimal sample size for multiple testing: the case of gene expression
  microarrays.
\newblock {\em Journal of the American Statistical Association\/}, {\bf
  99}(468), 990--1001.

\bibitem[Narisetty and He(2014)Narisetty and He]{narisetty2014}
Narisetty, N.~N. and He, X. (2014).
\newblock Bayesian variable selection with shrinking and diffusing priors.
\newblock {\em Ann. Statist.}, {\bf 42}(2), 789--817.

\bibitem[Risser {\em et~al.}(2019)Risser, Paciorek, and Stone]{Risser19}
Risser, M.~D., Paciorek, C.~J., and Stone, D.~A. (2019).
\newblock {S}patially {D}ependent {M}ultiple {T}esting {U}nder {M}odel
  {M}isspecification, {W}ith {A}pplication to {D}etection of {A}nthropogenic
  {I}nfluence on {E}xtreme {C}limate {E}vents.
\newblock {\em Journal of the American Statistical Association\/}, {\bf
  114}(525), 61--78.

\bibitem[Sarkar {\em et~al.}(2008)Sarkar, Zhou, and Ghosh]{SanatGhosh08}
Sarkar, S.~K., Zhou, T., and Ghosh, D. (2008).
\newblock {{A general decision theoretic formulation of procedures controlling
  FDR and FNR from a Bayesian perspective}}.
\newblock {\em Statistica Sinica\/}, {\bf 18}(3), 925--945.

\bibitem[Schwartz(1965)Schwartz]{schwartz1965}
Schwartz, L. (1965).
\newblock On bayes procedures.
\newblock {\em Zeitschrift f{\"u}r Wahrscheinlichkeitstheorie und verwandte
  Gebiete\/}, {\bf 4}(1), 10--26.

\bibitem[Schwartzman and Lin(2011)Schwartzman and Lin]{schwartzman2011}
Schwartzman, A. and Lin, X. (2011).
\newblock The effect of correlation in false discovery rate estimation.
\newblock {\em Biometrika\/}, {\bf 98}(1), 199--214.

\bibitem[Scott(2009)Scott]{Scott09}
Scott, J.~G. (2009).
\newblock {N}onparametric {B}ayesian {M}ultiple {T}esting for {L}ongitudinal
  {P}erformance {S}tratification.
\newblock {\em The Annals of Applied Statistics\/}, {\bf 3}(4), 1655--1674.

\bibitem[Scott and Berger(2010)Scott and Berger]{scott10}
Scott, J.~G. and Berger, J.~O. (2010).
\newblock {Bayes and empirical-Bayes multiplicity adjustment in the
  variable-selection problem}.
\newblock {\em Ann. Statist.}, {\bf 38}(5), 2587--2619.

\bibitem[Shalizi(2009)Shalizi]{Shalizi09}
Shalizi, C.~R. (2009).
\newblock {Dynamics of Bayesian updating with dependent data and misspecified
  models}.
\newblock {\em Electron. J. Statist.}, {\bf 3}, 1039--1074.

\bibitem[Storey(2003)Storey]{storey03}
Storey, J.~D. (2003).
\newblock {The positive false discovery rate: a Bayesian interpretation and the
  q-value}.
\newblock {\em Ann. Statist.}, {\bf 31}(6), 2013--2035.

\bibitem[Sun and Cai(2007)Sun and Cai]{sun2007}
Sun, W. and Cai, T.~T. (2007).
\newblock {Oracle and Adaptive Compound Decision Rules for False Discovery Rate
  Control}.
\newblock {\em Journal of the American Statistical Association\/}, {\bf
  102}(479), 901--912.

\bibitem[Sun and Cai(2009)Sun and Cai]{sun2009}
Sun, W. and Cai, T.~T. (2009).
\newblock Large-scale multiple testing under dependence.
\newblock {\em Journal of the Royal Statistical Society: Series B (Statistical
  Methodology)\/}, {\bf 71}(2), 393--424.

\bibitem[Sun {\em et~al.}(2015)Sun, Reich, Tony~Cai, Guindani, and
  Schwartzman]{sun15}
Sun, W., Reich, B.~J., Tony~Cai, T., Guindani, M., and Schwartzman, A. (2015).
\newblock False discovery control in large-scale spatial multiple testing.
\newblock {\em Journal of the Royal Statistical Society: Series B (Statistical
  Methodology)\/}, {\bf 77}(1), 59--83.

\bibitem[Welch(1939)Welch]{Welch39}
Welch, B.~L. (1939).
\newblock {O}n {C}onfidence {L}imits and {S}ufficiency, and {P}articular
  {R}eference to {P}arameters of {L}ocation.
\newblock {\em Annals of Mathematical Statistics\/}, {\bf 10}, 58--69.

\bibitem[Xie {\em et~al.}(2011)Xie, Cai, Maris, and Li]{caioptim11}
Xie, J., Cai, T.~T., Maris, J., and Li, H. (2011).
\newblock Optimal false discovery rate control for dependent data.
\newblock {\em Statistics and Its Interface\/}, {\bf 4}(4), 417.

\bibitem[Zhang {\em et~al.}(2011)Zhang, Fan, and Yu]{zhang2011}
Zhang, C., Fan, J., and Yu, T. (2011).
\newblock Multiple testing via {FDR}$_l$ for large scale imaging data.
\newblock {\em Ann. Statist.}, {\bf 39}(1), 613--642.

\end{thebibliography}

\newpage

\end{document}